\documentclass[11pt]{amsart}
\usepackage[utf8]{inputenc} 
\usepackage{fix-cm}   
\usepackage[T1]{fontenc}

\usepackage{lmodern}  
\usepackage[margin=1in,a4paper]{geometry} 
\usepackage{amsmath,amsthm,amssymb} 
\numberwithin{equation}{section}
\usepackage{bm} 
\usepackage[dvipsnames]{xcolor}  
\usepackage{microtype}
\usepackage{indentfirst}
\usepackage{mathtools} 
\usepackage{accents} 
\usepackage{adjustbox} 
\usepackage[most]{tcolorbox} 
\usepackage{tikz,tkz-euclide} 
\usepackage{tikz-cd} 
\usepackage{pgf} 
\usepackage{stmaryrd} 
\usepackage{dsfont} 
\usepackage{float}
\usepackage{pdfpages}
\usepackage{textgreek}
\usepackage{mathrsfs} 
\usepackage{lscape}
\usepackage{array}
\usepackage{comment}
\usepackage{thmtools}
\usepackage{thm-restate}
\usepackage{subcaption}
\usepackage{wrapfig}
\usepackage{xfrac}
\usepackage{nicefrac}
\usepackage[font=small,labelfont=bf]{caption}
\usetikzlibrary{fadings}
\usetikzlibrary{arrows.meta}
\usetikzlibrary{chains,positioning,scopes,graphs, graphs.standard,intersections,calc}

\usepackage{latexsym,amsmath,amsthm,amsfonts,amscd,amssymb,mathdots,mathtools}
\usepackage[utf8]{inputenc}
\usepackage{stmaryrd}
\usepackage[mathscr]{eucal}
\usepackage{mathrsfs}
\usepackage{enumerate}
\usepackage[margin=1in]{geometry}
\usepackage{mathrsfs}
\usepackage{amssymb}
\usepackage[all]{xy}
\usepackage{xcolor}
\usepackage{graphics}
\usepackage{lscape}
\usepackage{array}
\usepackage{xy}

\newcommand{\N}{\mathbb{N}}
\newcommand{\Z}{\mathbb{Z}}
\newcommand{\C}{\mathbb{C}}
\newcommand{\R}{\mathbb{R}}

\newcommand{\sll}{\mathfrak{s}\mathfrak{l}}

\newcommand{\Hom}{\operatorname{Hom}}
\newcommand{\End}{\operatorname{End}}

\newcommand{\rk}{\operatorname{rk}}
\newcommand{\ev}{\operatorname{ev}}
\newcommand{\codim}{\operatorname{codim}}
\newcommand{\gl}{\mathfrak{gl}}
\renewcommand{\phi}{\varphi}
\newcommand{\Jac}{\operatorname{Jac}}

\newcommand{\Prym}{\operatorname{Prym}}
\newcommand{\Nm}{\operatorname{Nm}}
\newcommand{\Pic}{\operatorname{Pic}}
\newcommand{\PGL}{\operatorname{PGL}}
\newcommand{\PSL}{\operatorname{PSL}}
\newcommand{\GL}{\operatorname{GL}}
\newcommand{\SL}{\operatorname{SL}}
\newcommand{\SU}{\operatorname{SU}}
\newcommand{\im}{\operatorname{Im}}

\newcommand{\rank}{\operatorname{rank}}
\newcommand{\tr}{\operatorname{tr}}
\newcommand{\id}{\operatorname{id}}

\newcommand{\Ndet}{\operatorname{Ndet}}

\renewcommand{\div}{\operatorname{div}}

\newcommand{\Aut}{\operatorname{Aut}}
\newcommand{\coim}{\operatorname{coim}}
\newcommand{\dolb}{\bar\partial}

\newcommand{\Lp}{\mathopen{\raisebox{-0.4em}{$\Biggl($}}}
\newcommand{\Rp}{\mathclose{\raisebox{-0.4em}{$\Biggr)$}}}

\newcommand{\fullquot}[2]{%
	#1\mkern-2mu
	\raisebox{-0.25ex}{\rotatebox{-18}{/}}
	\mkern+2mu
	\raisebox{-0.65ex}{$#2$}
}

\newcommand{\why}[1][check]{\textcolor{BrickRed}{\footnotesize \textbf{[#1]}}}

\newcommand*\circled[1]{\tikz[baseline=(char.base)]{
            \node[shape=circle,draw,inner sep=2pt] (char) {#1};}}

\newtheorem{theorem}{Theorem}[section]
\newtheorem{lemma}[theorem]{Lemma}
\newtheorem{cor}[theorem]{Corollary}
\newtheorem{prop}[theorem]{Proposition}
\newtheorem*{conj*}{\textcolor{red}{Conjecture}}

\theoremstyle{definition}
\newtheorem{deff}[theorem]{Definition}
\newtheorem*{deff*}{Definition}
\newtheorem{example}[theorem]{Example}

\theoremstyle{remark}
\newtheorem{rmk}[theorem]{Remark}

\usepackage[backend=biber,giveninits=true,style=alphabetic,maxbibnames=99,maxalphanames=4]{biblatex}
\usepackage{fancyhdr}
\usepackage[breaklinks]{hyperref} 
\usepackage{bookmark}

\newcommand{\ThmA}{
  Let $F_\alpha$ be a $\C^*$-fixed point component inside the moduli space $\mathcal{M}$. Then the subvariety $\mathcal L_\alpha$ is a $\C^*$-invariant Lagrangian affine fibration over $F_\alpha$ such that, for every $\mathcal E \in F_\alpha$, we have 
  \[T_{\mathcal E}\mathcal L_\alpha = T^0_{\mathcal E} \mathcal{M} \oplus \bigoplus_{j\ge 2} T^j_{\mathcal E}\mathcal M.\]
}

\newcommand{\ThmB}{
	Let $F_\alpha$ be a component of type $(1,\ldots,1)$ fixed points with $\deg \Phi_i = m_i$ and let $a \in \mathcal A$ be in the conditions of Proposition \ref{prop:generic}.
	Then, for $n \ge 3$ odd,
\[|\mathcal L_\alpha \cap h^{-1}(a)| =  n^{2g}\prod_{i=1}^{\frac{n-1}{2}}\binom{\binom{n}{i}(2g-2)}{m_i,m_{n-i}}   \] and for $n \ge 4$ even,
\[|\mathcal L_\alpha \cap h^{-1}(a)| = n^{2g}\binom{\binom{n}{n/2}(g-1)}{m_{n/2}}2^{m_{n/2}}\prod_{i=1}^{\frac{n}{2}-1}\binom{\binom{n}{i}(2g-2)}{m_i,m_{n-i}}.\]
}

\newcommand{\ThmC}{
	A fixed point $\mathcal E$ of type $(1,\ldots,1)$ is $Z$-very stable if and only if the only points of the associated divisor $\delta_{+} = \delta_1 + \cdots + \delta_{n-1}$ with multiplicity bigger than 1 (if any) are concentrated in precisely one of the extremities, $\delta_1$ or $\delta_{n-1}$.
}

\newcommand{\ThmD}{
	The relative Fourier-Mukai transform $S$ over $\mathcal A^0$ satisfies
\[S\left(\left.\mathcal O_{\mathcal L_\alpha}\right|_{\mathcal M^0}\right) =  \Lambda^0_\alpha|_{\mathcal M_{\PGL}^0}.\] 
}

\author[P.~B.\ Gothen]{Peter B.\ Gothen}
\address{P.~B. Gothen,
	\newline\indent Centro de Matemática da Universidade do Porto,
	\newline\indent Departamento de Matemática, Faculdade de Ciências, Universidade do Porto,
	\newline\indent Rua do Campo Alegre s/n, 4169-007 Porto, Portugal}
\email{pbgothen@fc.up.pt}

\author[A. Oliveira]{André Oliveira}
\address{A. Oliveira,
	\newline\indent Centro de Matemática da Universidade do Porto,
	\newline\indent Departamento de Matemática, Faculdade de Ciências, Universidade do Porto,
	\newline\indent Rua do Campo Alegre s/n, 4169-007 Porto, Portugal}
\email{andre.oliveira@fc.up.pt}

\author[R. Pereira]{Rodrigo Pereira}
\address{R. Pereira,
	\newline\indent Centro de Matemática da Universidade do Porto,
	\newline\indent Departamento de Matemática, Faculdade de Ciências, Universidade do Porto,
	\newline\indent Rua do Campo Alegre s/n, 4169-007 Porto, Portugal}
\email{rodrigo.pereira@fc.up.pt}

\title{Zero velocity Lagrangians in the Hitchin system}

\thanks{The authors were partially supported by FCT (Fundação para a Ciência e Tecnologia), under the projects with reference UID/00144/2025, and associated DOI
	https://doi.org/10.54499/UID/00144/2025, CMUP, member of LASI, and 2024.15931.PEX \textit{Higgs bundles: geometry, algebra and physics} with associated DOI https://doi.org/10.54499/2024.15931.PEX.
	This research was supported in part by the International Centre for Theoretical Sciences (ICTS) for the program - Geometric Structures and Stability (code: ICTS/GSS2026/02).
	The third author was supported by the FCT, under the PhD scholarship with reference 2023.00922.BD. The third author also wishes to thank Brian Collier and Miguel González for enlightening discussions.}

\begin{document}

	\begin{abstract}
		Lagrangians of the moduli space of Higgs bundles are a fundamental piece of its study under the lens of mirror symmetry. We construct a new class of Lagrangians defined via a ``zero velocity condition'' imposed on the natural action of the non-zero complex numbers on the moduli space, motivated by a certain ``infinitesimal conormal principle''. These are Lagrangian subvarieties analogous to the upward flows considered by \cite{COLLIER20191193} and \cite{HH}, but which fiber over the $\C^*$-fixed point subvarieties. We then prove a formula for the number of intersection points of this Lagrangian with the generic fiber of the Hitchin map, and build a vector bundle over the dual moduli space which we conjecture to correspond to its mirror hyperholomorphic bundle.
	\end{abstract}
	
	\maketitle
	
	\tableofcontents \newpage

\section{Introduction}

The moduli space $\mathcal M$ of Higgs bundles $(E,\Phi)$ over a Riemann surface $X$ is a hyperkähler algebraic variety whose very rich geometry has been under intensive study since its introduction in \cite{hitchinSDERS}. It is an important object for algebraic and symplectic geometry, representation theory and theoretical physics (namely \emph{mirror symmetry} -- the symmetry under which two different Calabi-Yau manifolds define the ``same string theory'').

Of fundamental importance is the Hitchin map \cite{hitchinSDERS,hitchinSBIS} $h:\mathcal M \to \mathcal A$, with $\mathcal A$ a vector space (the Hitchin base), which essentially sends a pair $(E,\Phi)$ to the coefficients of the characteristic polynomial of $\Phi$. This map turns $\mathcal M$ into a completely integrable system \cite{BNR,hitchinSDERS,hitchinSBIS, bottacinSGMSSP, markmanSCIS} -- the celebrated Hitchin system -- and plays a central role in mirror symmetry \cite{donagi_pantev,FGOP1,HH,HT,KW,SYZ}.

The moduli space $\mathcal M$, being hyperkähler, possesses three complex structures $I,J,K$ satisfying the quaternion relations, together with three compatible real symplectic forms $\omega_I,\omega_J,\omega_K$. Following the terminology of \cite{KW}, a B-brane is roughly speaking a holomorphic bundle over a complex submanifold and an A-brane is a flat vector bundle over a Lagrangian submanifold. When three letters are grouped together, they refer to each of the three complex structures and symplectic forms of $\mathcal{M}$. Thus a BAA-brane is a holomorphic submanifold (with respect to $I$) which is Lagrangian with respect to the holomorphic symplectic form $\Omega_I = \omega_J + i\omega_K$, supporting a flat vector bundle, while a BBB-brane is a hyperholomorphic bundle (a bundle which is holomorphic with respect to the three complex structures) over a hyperkähler submanifold.

Mirror symmetry is predicted to exchange a BAA--brane of $\mathcal{M}$ with a BBB--brane of its mirror $\mathcal M^\vee$ \cite{KW}. 
This is because the Hitchin system $h: \mathcal{M} \to \mathcal A$ fits into the framework of the Strominger-Yau-Zaslow (SYZ) approach of obtaining mirror symmetric Calabi-Yau manifolds by replacing each nonsingular torus fiber by its dual and then attempting to extend it over the singular locus in the base $\mathcal A$. This justifies the relevance of studying the complex Lagrangian subvarieties (the supports of BAA-branes) of $\mathcal{M}$.

The starting point of this work is the observation that ``conormals give rise to Lagrangians''. By looking at the conormal spaces of a submanifold $S$ sitting inside a Lagrangian $L$, the idea is to find some other Lagrangian which agrees infinitesimally with the conormal bundle at the points of $S$.

We will apply this construction by considering the standard action of the non-zero complex numbers on the moduli space by scaling the Higgs field. Each connected component $F_\alpha$ of its fixed points  sits inside an irreducible component of the nilpotent cone -- the fiber over zero of the Hitchin map -- which is known to be Lagrangian.  At each fixed point $\mathcal E$, the tangent space decomposes into a sum of weighted spaces for the $\C^*$-action $T_{\mathcal E}^j \mathcal M$, where $j$ is an integer. The problem can then be reduced to that of the existence of a Lagrangian subvariety whose tangent space at each fixed point contains only weights $0$ or strictly bigger than $1$. Using the slice machinery of \cite{COLLIER20191193}, we show that indeed such an object, which we call $\mathcal L_\alpha$, exists. Moreover, it can be understood as those points of the moduli space whose $\C^*$-limit to $0$ lands in $F_\alpha$ with "zero velocity". More precisely, we extend the $\C^*$ orbit of a point $(E,\Phi)$ to a $\C$-orbit by adding its limit to $0$. The Lagrangian $\mathcal L_\alpha$ will then consist of those points whose derivative at $0$ vanishes.

\medskip
\noindent \textbf{Theorem \ref{thm:A}.} \textit{\ThmA}
\medskip

After building the Lagrangian $\mathcal L_\alpha$, we turn our attention to its interaction with the nilpotent cone. Based on \cite{HH}, we consider the "zero velocity upward flow" of a fixed point $\mathcal E$ -- the points that arrive with "zero velocity" to $\mathcal E$.
We characterize the fixed points of type $(1,\ldots,1)$ -- those that are direct sums of line bundles -- whose zero velocity upward flow contains nilpotent Higgs bundles (other than the fixed point itself) -- we call these $Z$-very stable, again based on the terminology of \cite{HH}. Since fixed points of type $(1,\ldots,1)$ are given by direct sums of line bundles $L_1 \oplus \cdots \oplus L_n$ over $X$ and maps $\Phi_i: L_i \to L_{i+1}K$ obtained by restricting the Higgs field, we obtain the following characterization by setting $\delta_i \coloneqq \div \Phi_i$.


\medskip
\noindent \textbf{Theorem \ref{thm:C}.} \textit{\ThmC}
\medskip

The proof is based on the observation that, while any non-reduced point of $\delta_+$ allows us to produce points with nilpotent Higgs field on the upward flow of the fixed point, when imposing the additional condition that the points must lie in the zero velocity locus, our choice of nonreduced points in $\delta_+$ is restricted, i.e., no "zero velocity nilpotents" can be produced from a multiple zero of $\delta_1$ or $\delta_{n-1}$.

We will then be interested in studying this Lagrangian under the lens of mirror symmetry, following the program of \cite{HH} where a dual BBB-brane was constructed for the upward flow of a very stable Higgs bundle of type $(1,\ldots,1)$. As a crucial first step towards that goal, we show that the intersection number of $\mathcal L_\alpha$ with a generic fiber of $h$ is finite and compute it. In order to do so, we first notice that, fixing a point $a$ in the Hitchin base, if $(E,\Phi)$ is in the fiber over $a$ and flows to a fixed point $\mathcal E$ with zero velocity, then there exist sections $R_{n,i}(a)$ of a power of $K$ such that $\div \Phi_i$ is a subdivisor of $\div R_{n,i}$. Moreover, we show in Proposition \ref{prop:generic} that there exists an open dense set $\mathcal A^0$ of the Hitchin base where the intersection points between the Lagrangian and a fiber over $\mathcal A^0$ all flow to very stable fixed points. This allows us to use a result of \cite{HH} to prove the following statement. 

\medskip
\noindent \textbf{Theorem \ref{thm:B}.} \textit{\ThmB}
\medskip

 Finally, we propose a description of what the mirror BBB brane should look like, again following the procedures of \cite{HH}. There, to each very stable fixed point $\mathcal E$ of type $(1,\ldots,1)$,  a vector bundle $\Lambda_{\mathcal E}$ over the moduli space is constructed using the twisted universal bundle. In our case, we find that, when computing the Fourier-Mukai transform of the intersection between the Lagrangian and a fiber of the Hitchin map over a point in $\mathcal A^0$, we obtain a direct sum, indexed by a finite set of fixed points $\mathcal E$, of subbundles of $\Lambda_{\mathcal{E}}$ restricted to the Hitchin fiber.
 
 We then build a vector bundle $\Lambda^0_\alpha$ over the open dense set $\mathcal M^0 = h^{-1}(\mathcal A^0)$ of the moduli space that agrees with the aforementioned Fourier-Mukai transform over each fiber $h^{-1}(a)$. Its rank will be given precisely by the numbers found in Theorem \ref{thm:B}. We also mention that, unlike \cite{HH}, which work over the moduli space of Higgs bundles for the group $\GL(n,\C)$, we consider the space of $\SL(n,\C)$-Higgs bundles. In particular this means that the vector bundle we construct will actually live over the moduli space of $\PGL(n,\C)$-Higgs bundles (since these groups are Langlands dual to each other). Thus, we have the following result.
 
 \medskip
 \noindent \textbf{Theorem \ref{thm:D}.} \textit{\ThmD}
 
 This paper is organized as follows. In section \ref{prelim}, we begin by reminding the reader the classic definitions and facts about $\SL(n,\C)$-Higgs bundles, their moduli space, the $\C^*$-action and the Hitchin system. We then proceed to the construction of our main object in section \ref{construct}, the "zero velocity Lagrangian" $\mathcal L_\alpha$ associated to a fixed point component $F_\alpha$ of the $\C^*$-action. In the remainder of the text, we will be exclusively working with type $(1,\ldots,1)$ fixed points, so in section \ref{main} we introduce their characterization in termos of divisors on the base curve $X$ and our main technical tool -- Hecke transformations of Higgs bundles. We also work out in detail some properties of the rank $2$ case, including the identification with the moduli space for $\SL(2,\R)$-Higgs bundles. We finish the section with a classification of type $(1,\ldots,1)$ into $Z$-very stable and $Z$-wobbly, according to the existence of nilpotent elements in their "zero velocity" upward flows. In section \ref{int_generic} we study the intersection between the Lagrangian and a generic fiber of the Hitchin map, we show that it is finite and count the number of points. Finally in section \ref{mirror} we build the vector bundle $\Lambda^0_\alpha$ over the dual moduli space of $\PGL(n,\C)$-Higgs bundles, which agrees generically with the Fourier-Mukai transform of the structure sheaf of $\mathcal L_\alpha$ at each fiber of Hitchin map. 
 
\section{Preliminaries} \label{prelim}

\subsection{The moduli space of Higgs bundles}

We start by recalling the notion of Higgs bundles and the construction of their moduli space, as done in \cite{hitchinSDERS,nitsure,simpson}.

Let $\mathcal M$ be the moduli space of semistable $\SL(n,\C)$-Higgs bundles over a compact Riemann surface $X$ of genus $g$ at least $2$ and canonical bundle $K$. 
In this paper we often think of holomorphic vector bundles $E$ on $X$ as pairs $(\mathbb E,\dolb_E)$, where $\mathbb E$ is the underlying smooth vector bundle and $\dolb_E$ is the Dolbeault operator which induces the holomorphic structure. Accordingly we will denote a Higgs bundle by either $(E,\Phi)$ or $(\dolb_E,\Phi)$, depending on if we want to emphasize the role of the Dolbeault operator. Since we are working in the group $\SL(n,\C)$, the holomorphic bundle $E$ will have trivial determinant and the Higgs fields $\Phi$, which we recall is a holomorphic section of $\End E \otimes K$, will have trace zero.

We recall the construction of the moduli of Higgs bundles. We fix a smooth vector bundle $\mathbb E$ over $X$  and consider the space of pairs
\[\mathscr{H}=\{(\dolb_E,\Phi) \mid \dolb_E \Phi = 0\}.\] 
Define the slope of a vector bundle $E$ to be $\mu(E) = \deg E / \rk E$, and recall that a Higgs bundle $(E,\Phi)$ is said to be \emph{stable} if all $\Phi$-invariant subbundles $F$ satisfy $\mu(F)< \mu (E)$, \emph{semistable} if we allow equality and \emph{polystable} if it is a direct sum of stable bundles with the same slope.

The complex gauge group $\mathcal G^{\C}=\Omega^0(\Aut \mathbb E)$ acts on $\mathscr H$ by conjugation and the moduli space $\mathcal M$ may be defined as the orbit space

\[\mathscr H^{\mathrm{ps}}/\mathcal G^{\C},\] where the superscript $\mathrm{ps}$ indicates we are considering only the polystable pairs. Its smooth locus $\mathcal M^s$ is obtained by considering the orbits of the stable points in $\mathscr H$.

Now let $\mathcal N$ be the moduli space of holomorphic vector bundles over $X$ of rank $n$ and trivial determinant. Its cotangent bundle is open and dense in the moduli of Higgs bundles $\mathcal M$ since elements of the cotangent space $T_E^*\mathcal N$ of a stable bundle $E$ are precisely Higgs fields on $E$. This endows $\mathcal M$ with a natural holomorphic symplectic form $\omega$ which at the level of representatives in $T_{(E,\Phi)}\mathscr H$ is given by 

\[\omega((\beta_1,\phi_1),(\beta_2,\phi_2)) = i \int_X\tr (\phi_2 \wedge \beta_1 - \phi_1 \wedge \beta_2).\]

\subsection{\texorpdfstring{Fixed points of the $\C^*$-action}{Fixed points of the C\^*-action}}

We will consider the usual action of the group $\C^*$ on $\mathcal M$ by scaling the Higgs field
\[t \cdot (E,\Phi) = (E,t\Phi).\] All points $(E,0) \in \mathcal M$ are fixed by the $\C^*$ action and the following characterizes those fixed points with non-zero Higgs field.

\begin{prop}[{\cite[Lemma 4.1]{simpsonHBLS}}] \label{prop:fixed}		
A point $(\dolb_E,\Phi) \in \mathcal{M}$ with $\Phi \ne 0$ is fixed by the $\C^*$-action if and only if there is a smooth splitting $E= E_1 \oplus \cdots \oplus E_l$, with respect to which we have 
\[\dolb_E = \begin{pmatrix}
\dolb_{E_1} & & &  \\ 
& \ddots & &\\ 
& & \ddots & \\ 
& & &\dolb_{E_l}
\end{pmatrix}, \quad \Phi = \begin{pmatrix}
0 & & & \\ 
\Phi_1 & 0 & & \\ 
& \ddots & \ddots & \\ 
& & \Phi_{l-1} & 0
\end{pmatrix},\] where $\Phi_j : E_j \to E_{j+1} \otimes K$ is holomorphic. The vector $(\rk E_1,\ldots,\rk E_l)$ is called the \emph{type} of the fixed point.
\end{prop}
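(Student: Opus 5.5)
The plan is the standard argument via a gauge-theoretic lift of the $\C^*$-action. Let $(\dolb_E,\Phi)$ be polystable with $\Phi\neq 0$.

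\emph{Sufficiency.} Given a splitting $E=E_1\oplus\cdots\oplus E_l$ as in the statement, and $t\in\C^*$, define the gauge transformation
\[g_t=\operatorname{diag}(t^{-\lambda_1},\ldots,t^{-\lambda_l}),\]
acting by $t^{-\lambda_j}$ on $E_j$. Choose the weights so that $\lambda_{j+1}-\lambda_j=1$ and, after a constant shift, $\sum_j \lambda_j\rk E_j=0$; this shift is harmless since scalars act trivially. Because each $E_j$ is $\dolb_E$-invariant, $g_t$ commutes with $\dolb_E$. It acts on the block $\Phi_j:E_j\to E_{j+1}K$ by the factor $t^{-(\lambda_{j+1}-\lambda_j)}=t^{-1}$. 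Hence $g_t\cdot(\dolb_E,t\Phi)=(\dolb_E,\Phi)$. The only subtlety is determinant one: $g_t$ has determinant $t^{-\sum\lambda_j\rk E_j}=1$ by the choice of shift. Alternatively, one can simply work in $\GL$ gauge modulo centre, since the $\SL$ moduli space embeds.

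\emph{Necessity.} Suppose $(E,t\Phi)\cong(E,\Phi)$ for all $t$. First I would reduce to the case where $(E,\Phi)$ is stable. A polystable object is a sum of stables, and the $\C^*$-action permutes the isomorphism classes of the summands. The permutation action of the connected group $\C^*$ on a finite set is trivial, so each summand is itself fixed, and I treat the summands separately and take direct sums of the resulting splittings.

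For stable $(E,\Phi)$, the automorphism group is $\C^*$, so for each $t$ there is a gauge transformation $g_t$, unique up to scalars, with $g_t\cdot(\dolb_E,t\Phi)=(\dolb_E,\Phi)$. The main obstacle is to turn $t\mapsto g_t$ into a genuine homomorphism $\C^*\to\mathcal G^{\C}$. The map $t\mapsto [g_t]$ is a homomorphism into $\mathcal G^{\C}/\C^*$. The induced infinitesimal generator $\psi=\frac{d}{dt}g_t|_{t=1}$ is well defined modulo scalars, and it satisfies
\[\dolb_E\psi=0,\qquad [\psi,\Phi]=\Phi.\]
So $\psi$ is a holomorphic endomorphism of $E$. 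Its characteristic polynomial has holomorphic coefficients on a compact curve, hence constant ones, so $\psi$ has constant eigenvalues. I then take the generalized eigenspace decomposition $E=\bigoplus_\mu E_\mu$, which consists of holomorphic subbundles. The relation $[\psi,\Phi]=\Phi$ gives $\Phi(E_\mu)\subset E_{\mu+1}K$.

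Stability now does the remaining work. The sum of the eigenspaces lying in one coset $\mu_0+\Z$ is $\Phi$-invariant and holomorphic, and the same holds for its complement, so stability forces there to be a single coset. Any gap in that $\Z$-string would likewise split $(E,\Phi)$ into two $\Phi$-invariant pieces, which is again excluded. Ordering the eigenspaces $E_1,\ldots,E_l$ by increasing eigenvalue then gives exactly the block-diagonal $\dolb_E$ and the lower-subdiagonal $\Phi$ of the statement.

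One could also obtain $\psi$ from a hyperkähler or moment-map argument, but the algebraic route above is enough. I expect the main care to be needed in the well-definedness of $\psi$ modulo scalars, in the reduction from polystable to stable, and in the argument that the eigenspaces are genuine holomorphic subbundles. That last point follows because a holomorphic endomorphism with constant characteristic polynomial has kernels of $(\psi-\mu)^N$ of constant rank, and these kernels are therefore subbundles.
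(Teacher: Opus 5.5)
The paper gives no proof of this proposition; it is quoted from Simpson, so your argument has to stand on its own. Its overall shape is the right one: constant eigenvalues of a holomorphic endomorphism, generalized eigenspaces as holomorphic subbundles, $\Phi$ shifting eigenvalues, and stability forcing a single unbroken chain. The sufficiency direction is fine, as is the reduction to stable summands.

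There is, however, a real (though fixable) gap, exactly at the step you call the main obstacle. You only show that $\rho\colon t\mapsto[g_t]$ is an abstract group homomorphism $\C^*\to\Aut(E)/\C^*$, and then differentiate it at $t=1$. Nothing you say makes $\rho$ continuous, let alone differentiable. An abstract homomorphism out of $\C^*$ need not be, since $\C^*$ has many discontinuous ones. So $\psi$ and the identity $[\psi,\Phi]=\Phi$ are not yet justified. Whether $\psi$ is well defined modulo scalars is a side issue; what is missing is regularity.

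There are two ways to close this.

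\emph{Keeping your route.} The set $\{(t,[g]) : g\,(t\Phi)\,g^{-1}=\Phi\}$ is the graph of $\rho$. It is a Zariski-closed subgroup of the algebraic group $\C^*\times\Aut(E)/\C^*$, where $\Aut(E)$ is the unit group of the finite-dimensional algebra $H^0(\End E)$. It projects bijectively onto $\C^*$, and in characteristic zero a bijective homomorphism of algebraic groups is an isomorphism. Hence $\rho$ is algebraic and may be differentiated.

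\emph{Simpler: drop the one-parameter family.} Fix one $t_0\in\C^*$ that is not a root of unity, and one $g\in\Aut(E)$ with $g\,(t_0\Phi)\,g^{-1}=\Phi$, i.e. $g\Phi=t_0^{-1}\Phi g$. As before, $g$ has constant eigenvalues and its generalized eigenspaces $E_\mu$ are holomorphic subbundles. The identity $(g-t_0^{-1}\mu)^N\Phi=t_0^{-N}\Phi(g-\mu)^N$ gives $\Phi(E_\mu)\subseteq E_{t_0^{-1}\mu}K$. Since $t_0$ has infinite order, the eigenvalues in each orbit $\mu\, t_0^{\Z}$ are totally ordered by their exponent. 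List the $E_\mu$ along these orbits, concatenating different orbits with $\Phi_j=0$ at the junctions; this gives the required splitting. This version needs neither uniqueness of $g$ up to scalars nor stability, so your reduction from polystable to stable summands also becomes unnecessary.
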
 

Some words on notation: we let $\coprod_\alpha F_\alpha$ be the decomposition of the fixed point locus into its connected components and we will often denote fixed points by the single letter $\mathcal E$ for simplicity (keep in mind these are still Higgs bundles).

The splitting of Proposition \ref{prop:fixed} gives a $\Z$-grading on $\End E$ (in the setting of $\SL(n,\C)$-Higgs bundles this denotes only the traceless endomorphisms), $\End E = \bigoplus_{j \in \Z} \End_j(E)$, with 
\begin{equation} \label{end_decomp}
	\End_j(E) = \bigoplus_{i-k = j}\Hom(E_i,E_k),
\end{equation} if $1-l\le j \le l-1$ and $\End_j = 0$ for all other indices. We also set $\End_+E \coloneqq \bigoplus_{j>0}\End_j E$. We note that the grading on $\End E$ we consider here is the one in \cite{COLLIER20191193} but differs from others in the literature, for example in \cite{GPGM}, where it has the opposite sign. 

Recall that a stable $(E,\Phi) \in \mathcal M$ is a smooth point whose tangent space is given by the first hypercohomology of the deformation complex 
\begin{equation} \label{def_comp}
	\begin{tikzcd}
		C^\bullet(E,\Phi): \End E \arrow[r,"{[\Phi,-]}"] & \End E \otimes K.
	\end{tikzcd}
\end{equation}
 At a stable fixed point $(E,\Phi)$, the decomposition of Proposition \ref{prop:fixed} induces a decomposition of its tangent space (\cite[Theorem 3.8]{GPGM})
\begin{equation} \label{decomp}
	T_{(E,\Phi)}\mathcal M = \bigoplus_j \mathbb H^1(C_j^\bullet(E,\Phi)),
\end{equation}
where $C_j^\bullet(E,\Phi)$ is the subcomplex 
\begin{equation}\label{comp_decomp}
\begin{tikzcd}
C_j^\bullet(E,\Phi): \End_j E \arrow[r, "{[\Phi,-]}"] & \End_{j-1}E \otimes K.
\end{tikzcd}
\end{equation} Moreover, setting $T_{\mathcal E}^j \mathcal M \coloneqq \mathbb H^1(C_j^\bullet (\mathcal E))$, by \cite[Proposition 3.9]{GPGM} the symplectic form induces the pairing, for each $j$, 
\begin{equation}\label{weight_pair}
	(T_{\mathcal E}^j\mathcal M)^* \cong T_{\mathcal E}^{1-j}\mathcal M.
\end{equation}


We will also be dealing with two more notions related to the $\C^*$-action and its fixed points.

\begin{deff}
	Let $F_\alpha$ be a $\C^*$-fixed point component. We define the \emph{upward flow} of a fixed point $\mathcal E \in F_\alpha$ as
 \begin{equation}
	W_{\mathcal E}^+=\left\{(E,\Phi) \in \mathcal M \left| \,  \lim_{t \to 0} t \cdot (E,\Phi) = \mathcal E\right.\right\}
\end{equation}
 and the \emph{attractor} of $F_\alpha$ as
\begin{equation}
	W_\alpha^+=\left\{(E,\Phi) \in \mathcal M \left| \,  \lim_{t \to 0} t\cdot (E,\Phi) \in F_\alpha\right.\right\}.
\end{equation}
\end{deff}

\subsection{The Hitchin system}\label{hitchin_sys}

Given a basis $(a_2,\ldots,a_n)$ of the invariant polynomials of the Lie algebra $\sll(n,\C)$ with $\deg a_i = i$, we define the \emph{Hitchin map} 

\begin{equation} \label{hitchin_map}
\begin{array}{rrcl}
    h : & \mathcal M & \longrightarrow & \mathcal A \\
        & (E,\Phi) & \longmapsto     & (a_2(\Phi),\ldots,a_n(\Phi)),
\end{array}
\end{equation}
where $\mathcal A \coloneqq \bigoplus_{i=2}^n H^0(K^i)$ is a complex vector space with half the dimension of $\mathcal M$. Two examples of such bases are the coefficients of the characteristic polynomial of $\Phi$ and the traces of its powers.

For generic $a \in \mathcal A$, the fiber $h^{-1}(a)$ is (a torsor for) an abelian variety as per \cite{BNR}. In fact, to each $a \in \mathcal A$ there exists an associated \emph{spectral curve} $X_a$ sitting inside $|K|$, the total space of the canonical bundle $K$, together with a projection $\pi_a:X_a \to X$, defined by the vanishing locus of the section
\[\lambda^n + a_2 \lambda^{n-2} + \cdots + a_n,\] where $\lambda$ is the tautological section  of the pullback of $K$ on $|K|$. If $a$ is such that $X_a$ is smooth, there is a 1-1 correspondence (the so called BNR correspondence) between Higgs bundles $(E,\Phi) \in h^{-1}(a)$ and line bundles of a fixed degree $d$ on $X_a$, i.e., points of the Jacobian variety $\Jac^d(X_a)$. Roughly speaking, the points of the spectral curve over a given $c \in X$ correspond to the eigenvalues of $\Phi_c$ and the line bundle over those points corresponds to its eigenspaces.

On the other hand, the fiber over $0$ -- the \emph{nilpotent cone} -- has a complicated structure, with multiple irreducible components $\mathcal C_\alpha$, indexed by the components $F_\alpha$ of the fixed point locus of the $\C^*$-action.

The map $h$ is proper and gives $\mathcal M$ the structure of a complex integrable system (\cite{hitchinSDERS,hitchinSBIS}). Moreover it is equivariant with respect to the $\C^*$-action on $\mathcal M$ and the natural weighted action of $\C^*$ on $\mathcal A$. Together with properness, this shows that $\lim_{t \to 0} t\cdot(E,\Phi)$ always exists and is a fixed point of the action. Thus the attractors $W_\alpha^+$ partition the moduli space into its Białynicki-Birula stratification: by \cite[Theorem 4.1]{bia_bir}, $W_\alpha^+$ is locally closed and the map
\begin{equation}\label{fibration}
\pi_\alpha^+: W_{\alpha}^+ \longrightarrow F_\alpha
\end{equation}
sending a point to its $\C^*$-limit to $0$ is an affine fibration over the fixed point component $F_\alpha$ with fiber dimension equal to $\dim(T_{\mathcal E} W_{\mathcal E}^+)$. It also follows that, for a fixed point $\mathcal E \in F_\alpha$ (with non-zero Higgs field), we have
\begin{equation} \label{BBweights}
	T_{\mathcal E} F_\alpha = T^0_{\mathcal E} \mathcal M, \qquad T_{\mathcal E} W_{\mathcal E}^+ = \bigoplus_{j > 0} T^j_{\mathcal E} \mathcal M, \qquad T_{\mathcal E} \mathcal C_\alpha = \bigoplus_{j \le 0} T^j_{\mathcal E} \mathcal M
\end{equation}
where we recall that $T_{\mathcal E}^j \mathcal M =\mathbb H^1(C^\bullet_j (\mathcal E) )$, as in \eqref{decomp}. 
 
\begin{rmk}
	For any complex reductive Lie group $G$ there is a notion of $G$-Higgs bundle as a holomorphic principal bundle together with a section of an associated bundle, which also gives rise to a moduli space $\mathcal M(G)$. In this text we will be concerned only with linear Lie groups, mostly $\SL(n,\C)$ as we have described so far, but on occasion we will also mention other groups, such as $\GL(n,\C)$, the real forms $\SU(p,q)$ and $\mathrm U(p,q)$, and the projective linear group $\PGL(n,\C)=\PSL(n,\C)$. In this case we will denote the corresponding Hitchin map by $h_G:\mathcal M(G) \to \mathcal A_G$. We note here that
	\[
	\mathcal A_{\PGL} = \mathcal A, \text{ and } \mathcal A_{\GL} = H^0(K) \oplus \mathcal A.
	\]
\end{rmk}

\section{Zero velocity Lagrangians} \label{construct}

\subsection{Motivation}


It is a well known fact that one way of producing Lagrangians inside the cotangent bundle of a manifold $Y$ is to consider the conormal bundle of a submanifold $S \subset Y$. To give a very simple example in our setting, consider a point $E \in \mathcal N$. Then the conormal bundle associated to the inclusion $\{E\} \subset \mathcal N$ coincides with the cotangent fiber $H^0(\End E \otimes K)$ which embeds as a Lagrangian submanifold inside $\mathcal M$ via the map $\Phi \mapsto (E,\Phi)$.

In order to generalize this phenomenon, consider a Lagrangian $L$ of a symplectic manifold $M$ (playing the role of $Y \subset T^*Y$). Letting $S$ be a submanifold of $L$, consider the collection of all conormal spaces $N^*_p S \subset T_p ^* L$, for $p \in S$. There is now no \emph{a priori} natural way to embed the conormal bundle $N^*S$ (as a submanifold of $L$) back in $M$, so the goal becomes to find another Lagrangian $\mathcal L$ which integrates the conormal spaces at the points of $S$. In other words, we seek a Lagrangian $\mathcal L \subset M$ satisfying, for all $p \in S$, 
\begin{equation}
	T_p \mathcal L \cong T_p S \oplus N_p^* S.
\end{equation}



Going back to the moduli space of Higgs bundles, we are interested in the case where $S$ is a component $F_\alpha$ of the fixed point locus of the $\C^*$-action (with non-zero Higgs field) and $L$ is an irreducible component $\mathcal C_\alpha$ of the nilpotent cone $h^{-1}(0)$ (it follows from Proposition \ref{prop:fixed} that each such fixed point indeed has nilpotent Higgs field, so that $F_\alpha \subset \mathcal C_\alpha$). In this case, letting $\mathcal E \in F_\alpha$, the defining short exact sequence of the conormal bundle at $\mathcal E$,
\[0 \to N_{\mathcal E}^*F_\alpha \to T_{\mathcal E}^* \mathcal{C}_\alpha \to T_{\mathcal E}^* F_\alpha \to 0,\] can be written, using \eqref{BBweights}, as
\[0 \to N_{\mathcal E}^*F_\alpha\to \bigoplus_{j \le 0} \left( T^j_{\mathcal E}\mathcal M\right)^*  \to \left(T^0_{\mathcal E}\mathcal M\right)^* \to 0.\] 
By \eqref{weight_pair}, it follows that
\[N_{\mathcal E}^*F_\alpha \cong \bigoplus_{j \ge 2} T^j_{\mathcal E}\mathcal M.\] Accounting for the variation of $\mathcal E$ along $F_\alpha$, we seek a Lagrangian subvariety $\mathcal L_\alpha$ satisfying

\begin{equation}\label{weights}
	T_{\mathcal E}\mathcal L_\alpha \cong T^0_{\mathcal E}\mathcal M \oplus \bigoplus_{j \ge 2} T^j_{\mathcal E}\mathcal M,
\end{equation}
for each $\mathcal E \in F_\alpha$.

\subsection{Construction}

In the following, we will first obtain the desired Lagrangian locally around each fixed point $\mathcal E \in F_\alpha$, using tools from \cite{COLLIER20191193}. We then recognize the global condition which will allow us to glue all of the local descriptions into a single Lagrangian.

To begin, we first recall that, via the nonabelian Hodge correspondence (\cite{hitchinSDERS,donaldson_THMSDE,simpson88,10.4310/jdg/1214442469}), every stable Higgs bundle $(E,\Phi)$ has a unique hermitian metric -- called the harmonic metric -- which solves Hitchin's equation, \[F_{\nabla}+[\Phi,\Phi^*]=0.\] Here $\nabla$ is the Chern connection on $E$ associated to the harmonic metric. 
Then $F_\nabla$ is its curvature and $\Phi^*$ is the adjoint of $\Phi$ with respect to the harmonic metric. 

Setting $\partial_E \coloneqq \nabla^{1,0}$, we define the operators 
\[D'' = \dolb_E +\Phi, \quad D' = \partial_E + \Phi^*.\]

These allow us to define the following local model for $\mathcal M$ around a point $(\dolb_E,\Phi)$.

\begin{deff}\label{def:hodge}
	The \emph{Hodge slice} at a Higgs bundle $(\dolb_E,\Phi) \in \mathscr H$ is given by
	\begin{equation}\label{eq:hodge}
		\mathcal{S}_{(\dolb_E,\Phi)} \coloneqq \left\{(\beta,\phi)\in \Omega^{0,1}(\End E) \oplus \Omega^{1,0}(\End E) \left|
		\begin{array}{l}
			D''(\beta,\phi)+[\beta,\phi]=0 \\ 
			D'(\beta,\phi)=0
		\end{array}\right.\right\}.
	\end{equation}
\end{deff}

Note that, setting 
\[\mathcal H^1(E,\Phi)\coloneqq\left\{(\beta,\phi)\in \Omega^{0,1}(\End E) \oplus \Omega^{1,0}(\End E) \left|
\begin{array}{l}
 D''(\beta,\phi)=0 \\ 
 D'(\beta,\phi)=0
 \end{array}\right.\right\}\] (we are essentially picking a representative in $\ker D'$ for each element of $\mathbb H^1(\dolb_E,\Phi)$), we have \[T_{(0,0)} \mathcal S_{(\dolb_E,\Phi)} = \mathcal H^1(E,\Phi). \]


The following result tells us that the Hodge slices give local charts for the moduli space.

\begin{prop}[{\cite[Proposition 3.4]{COLLIER20191193}}] \label{prop:hodge}
If $(\dolb_E,\Phi)$ is stable, then the map
\[
\begin{array}{crcl}	
p_{(\dolb_E,\Phi)}:&\mathcal{S}_{(\dolb_E,\Phi)} \cap \mathscr{H}^{\mathrm{ps}} & \longrightarrow & \mathcal{M} \\ 
&(\beta,\phi) & \longmapsto & [(\dolb_E+\beta,\Phi+\phi)]
\end{array}	
\] is a homeomorphism from a neighborhood of the origin in $\mathcal{S}_{(\dolb_E,\Phi)}$ to a neighborhood of $[(\dolb_E,\Phi)]$ in $\mathcal M$.
\end{prop}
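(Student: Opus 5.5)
The plan is to realize the Hodge slice as a Kuranishi-type local slice for the action of $\mathcal G^{\C}$ on $\mathscr H$, working throughout in suitable Sobolev completions ($L^2_k$ for the pairs, $L^2_{k+1}$ for gauge transformations, $k$ large) and using elliptic regularity to return to smooth objects. The key analytic input is the operator $L \coloneqq D' \circ D''$ on $\Omega^0(\End E)$, where $D''\gamma = (\dolb_E\gamma, [\Phi,\gamma])$ is the infinitesimal gauge action at $(\dolb_E,\Phi)$. By the Kähler identities, $L$ is, up to a constant, the Laplacian of the deformation complex $C^\bullet(E,\Phi)$ twisted by the harmonic metric. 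Hence $L$ is elliptic and self-adjoint, and $\ker L = \ker D''$ is the space of infinitesimal automorphisms. Since $(\dolb_E,\Phi)$ is stable and we work with traceless endomorphisms, $\ker D'' = 0$, so $L$ is an isomorphism $L^2_{k+1} \to L^2_{k-1}$.

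\emph{Surjectivity onto a neighbourhood.} Write a pair close to $(\dolb_E,\Phi)$ as $(\dolb_E+\beta',\Phi+\phi')$ with $(\beta',\phi')$ small. We look for $g = \exp(\gamma)$ such that $g\cdot(\dolb_E+\beta',\Phi+\phi') - (\dolb_E,\Phi)$ lies in $\ker D'$. Consider the map
\[F(\gamma,\beta',\phi') = D'\bigl(g\cdot(\dolb_E+\beta',\Phi+\phi') - (\dolb_E,\Phi)\bigr).\]
Its derivative in $\gamma$ at the origin is $-L$ (up to sign), which is invertible. The implicit function theorem therefore gives a unique small $\gamma(\beta',\phi')$ depending smoothly on $(\beta',\phi')$. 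The resulting pair satisfies $D'(\cdot)=0$ automatically. It also satisfies the first equation of \eqref{eq:hodge}, because that equation is exactly the gauge-invariant integrability condition $\dolb\Phi = 0$ written relative to $(\dolb_E,\Phi)$. Hence every pair near $(\dolb_E,\Phi)$ is gauge equivalent to a point of $\mathcal S_{(\dolb_E,\Phi)}$ near $0$. Since the moduli space carries the quotient topology and stability is an open condition, this shows that the image of $p_{(\dolb_E,\Phi)}$ contains a neighbourhood of $[(\dolb_E,\Phi)]$.

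\emph{Injectivity.} Suppose two small slice elements $x_1,x_2$ satisfy $g\cdot x_1 = x_2$. I would first show that $g$ must lie close to the identity modulo the centre $\mu_n$ of $\SL(n,\C)$, which acts trivially. The route is a properness statement for the action of $\mathcal G^{\C}$ near a stable point. If $g_i\cdot x_{1,i} = x_{2,i}$ with $x_{1,i}, x_{2,i}\to 0$, then after normalising $g_i$ one extracts a limit, by elliptic estimates for $D''$, which is a nonzero endomorphism intertwining $(\dolb_E,\Phi)$ with itself. Stability forces this limit to be scalar, hence central. An unbounded sequence would instead produce, after rescaling, a nonzero nilpotent or non-invertible homomorphism between stable objects, which is impossible. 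Once $g$ is known to be near the identity, the uniqueness part of the implicit function theorem gives $x_1 = x_2$.

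\emph{Conclusion and main obstacle.} Continuity of $p_{(\dolb_E,\Phi)}$ is immediate. The inverse is continuous because the map $(\beta',\phi')\mapsto$ (slice representative) is smooth and is constant on gauge orbits near the base point, so it descends to the quotient. Polystability of slice points near $0$ follows from openness of stability, which also justifies the intersection with $\mathscr H^{\mathrm{ps}}$ in the statement. I expect the main obstacle to be the properness/compactness step in the injectivity argument, namely controlling a gauge transformation $g$ that relates two nearby slice points without assuming a priori that $g$ is close to the identity. The plan is to handle this by the rescaling and elliptic-bootstrap argument above, with stability supplying the rigidity of the limit.
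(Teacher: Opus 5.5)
Your proposal is correct and is the standard Kuranishi-type slice argument: you gauge-fix into $\ker D'$ by the implicit function theorem, using that $D'D''$ is (via the Kähler identities) an invertible Laplacian on traceless endomorphisms at a simple point, and you get local injectivity from a compactness-plus-simplicity argument combined with the uniqueness part of the implicit function theorem. This is the argument behind \cite[Proposition 3.4]{COLLIER20191193}, which the paper quotes without reproving.
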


In \cite{COLLIER20191193}, the Hodge slice at a stable fixed point $\mathcal E$ is modified to only contain the positive directions and it is then shown that this new ``BB-slice'' $\mathcal S^+_{\mathcal E}$ is biholomorphic to the upward flow $W_{\mathcal E}^+$, thus allowing the authors to show that it is a Lagrangian submanifold of $\mathcal M$ (which was also seen in \cite{HH}).

We proceed in a similar fashion and, at a stable fixed point $\mathcal E = (E,\Phi)$, we modify the BB-slice by swapping the weights since, according to \eqref{weights}, the tangent space of our Lagrangian at a fixed point should have weights $0$ and greater or equal to $2$. Thus, we ``remove weight 1'' and ``include weight 0'' in our new slice:
\begin{equation}\label{new_slice}
	\widehat{\mathcal S}_{\mathcal E}\coloneqq\left\{ \! \left.(\beta,\phi)\in
	\begin{array}{c}
		\Omega^{0,1}(\End_{0}E \oplus \End_{\ge 2}E) \\ \oplus \\  \Omega^{1,0}(\End_{-1} E \oplus \End_{\ge 1}E)
	\end{array}
	\! \right| \!
	\begin{array}{l}
		D''(\beta,\phi)+[\beta,\phi]=0 \\ 
		D'(\beta,\phi)=0
	\end{array} \! \! \right\}.
\end{equation}

One issue that arises when performing this swap is that the argument of  \cite[Corollary 4.3]{COLLIER20191193} showing that the map $p_{\mathcal E}$ restricted to the BB-slice is injective breaks down with the inclusion of weight $0$. We then have to restrict the map to a neighborhood of the origin of the slice, obtaining an isotropic patch (which we will later see is in fact Lagrangian) around each fixed point satisfying \eqref{weights}.

 \begin{prop}\label{local_lag}
 	Let $\mathcal E$ be a stable fixed point of the $\C^*$-action and let $U$ be a neighborhood of the origin of the Hodge slice $\mathcal S_{\mathcal E}$ such that the map $p_{\mathcal E}$ of Proposition \ref{prop:hodge} is a biholomorphism between $U$ and its image. Then
 	\[\mathcal L_{\mathcal E}^U \coloneqq p_{\mathcal E}(\widehat{\mathcal S}_{\mathcal E}\cap U)\] is isotropic in $\mathcal M$. 
 \end{prop}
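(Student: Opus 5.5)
Since $p_{\mathcal E}$ restricted to $U$ is a biholomorphism onto its image, the pullback of the holomorphic symplectic form $\omega$ of $\mathcal M$ to $\mathcal S_{\mathcal E}\cap U$ is computed by the same formula as on $\mathscr H$:
\[
p_{\mathcal E}^*\omega\bigl((\dot\beta_1,\dot\phi_1),(\dot\beta_2,\dot\phi_2)\bigr)= i\int_X \tr(\dot\phi_2\wedge\dot\beta_1-\dot\phi_1\wedge\dot\beta_2).
\]
The reason is that the slice is an affine subspace of $\Omega^{0,1}(\End E)\oplus\Omega^{1,0}(\End E)$ translated by $(\dolb_E,\Phi)$. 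Its tangent vectors at $(\beta,\phi)$ are pairs $(\dot\beta,\dot\phi)$ in the same space, and the form on $\mathscr H$ is constant, hence descends to $\omega$ on $\mathcal M$ via $p_{\mathcal E}$. So it suffices to show that for any point $(\beta,\phi)\in\widehat{\mathcal S}_{\mathcal E}\cap U$ and any two tangent vectors to $\widehat{\mathcal S}_{\mathcal E}$ there, this integral vanishes.

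The key step is a weight computation. Tangent vectors to $\widehat{\mathcal S}_{\mathcal E}$ lie in the ambient linear space of \eqref{new_slice}:
\[
\dot\beta_i\in\Omega^{0,1}(\End_0E\oplus\End_{\ge2}E),\qquad \dot\phi_i\in\Omega^{1,0}(\End_{-1}E\oplus\End_{\ge1}E).
\]
For graded pieces, $\tr(A_jB_k)=0$ unless $j+k=0$, because $A_jB_k\in\End_{j+k}E$ and only $\End_0$ has possibly nonzero trace (the off-diagonal blocks $\Hom(E_i,E_k)$ with $i\ne k$ are traceless). So we need the weights of $\dot\phi$ and $\dot\beta$ to sum to zero.
\begin{itemize}
\item The possible weights of $\dot\phi$ lie in $\{-1\}\cup\{1,2,\dots\}$ and those of $\dot\beta$ in $\{0\}\cup\{2,3,\dots\}$.
\item A sum equal to zero would require either $\dot\phi$ weight $0$ with $\dot\beta$ weight $0$, impossible since $0\notin\{-1\}\cup\{\ge1\}$;
\item or $\dot\phi$ weight $-1$ with $\dot\beta$ weight $1$, impossible since $1$ is excluded;
\item or $\dot\phi$ weight $\ge1$ with $\dot\beta$ weight $\le-1$, also impossible.
\end{itemize}
Hence every term $\tr(\dot\phi_2\wedge\dot\beta_1)$ and $\tr(\dot\phi_1\wedge\dot\beta_2)$ vanishes pointwise, and so the integral vanishes.

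The only point I would check carefully is that the grading used in \eqref{new_slice} is compatible with the trace pairing in the way claimed. This amounts to verifying $\End_j\cdot\End_k\subset\End_{j+k}$ from \eqref{end_decomp}, which is immediate from composing $\Hom(E_i,E_k)$ maps. I would also check that $p_{\mathcal E}$ being a biholomorphism on $U$ makes $\mathcal L^U_{\mathcal E}$ a complex subvariety whose tangent spaces are images of tangent spaces of $\widehat{\mathcal S}_{\mathcal E}\cap U$ (at smooth points; at singular points isotropy is understood on the Zariski tangent cone, or on the smooth locus, which suffices). With this, $\omega|_{\mathcal L^U_{\mathcal E}}=0$. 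The argument never uses the nonlinear equations defining the slice, only the weight restrictions, so I expect no serious obstacle. The injectivity issue was already absorbed into the choice of $U$.
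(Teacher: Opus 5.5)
Your proof is correct and is essentially the paper's argument. Both rest on the weight pairing $\tr(\End_j\cdot\End_k)=0$ unless $j+k=0$, equivalently $\omega$ pairing $T^j$ with $T^{1-j}$ as cited from \cite[Proposition 3.9]{GPGM}, applied to the weights allowed in $\widehat{\mathcal S}_{\mathcal E}$. Evaluating $\omega$ directly on the slice representatives, which is legitimate since the form on $\mathbb H^1$ can be computed on any cocycle representative, is slightly cleaner than the paper's passage through harmonic representatives for the deformed metric. (Minor point: the slice is a subset of the affine space, not itself an affine subspace, but this does not affect the argument.)
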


 \begin{proof}
 	
 	Recalling \eqref{eq:hodge}, the tangent space to the Hodge slice at a fixed point $\mathcal E = (\dolb_E,\Phi)$ is given by  
\[T_{(\beta,\phi)}\mathcal S_{\mathcal E} = \{(\dot\beta,\dot\phi) \in \Omega^{0,1}(\End E)\oplus \Omega^{1,0}(\End E)\mid D''_{(\beta,\phi)}(\dot\beta,\dot\phi)=0,\, D'(\dot\beta,\dot\phi)=0\},\] where $D''_{(\beta,\phi)}=D'' + \beta +\phi = (\dolb_E +\beta)+(\Phi+\phi)$. This computation essentially uses that the operators $D'$ and $D''$ are linear but the commutator is quadratic.

The derivative of $p_{\mathcal E}$ at a point $(\beta,\phi)  \in \mathcal S_{\mathcal E}$ is given by

\[
\begin{array}{crcl}
d_{(\beta,\phi)}p_{\mathcal E}: & T_{(\beta,\phi)}\mathcal S_{\mathcal E} & \longrightarrow & T_{(\dolb_E+\beta,\Phi+\phi)}\mathcal M \cong \mathbb H^1(\dolb_E+\beta,\Phi+\phi) \\ 
& (\dot \beta,\dot \phi) & \longmapsto & [(\dot \beta,\dot \phi)].

\end{array}
\] If we again identify the target tangent space with $\mathcal H^1(\dolb_E + \beta,\Phi+\phi)$, we get an isomorphism \[\ker D''_{(\beta,\phi)} \cap \ker D' \cong \ker D''_{(\beta,\phi)} \cap \ker D'_{(\beta,\phi)}, \] i.e. the derivative of $p_{\mathcal E}$ essentially just picks the ``right'' harmonic representative (relative to the metric of the pair $(\dolb_E+\beta,\Phi+\phi)$) for the cohomology class $[(\dot \beta,\dot \phi)]$. 

Using our modified slice $\widehat{\mathcal S}_{\mathcal E}$, defined in \eqref{new_slice}, we have a description of the tangent space to $\mathcal L_{\mathcal E}^U$ as 
\begin{equation} \label{slice_tangent}
	\begin{aligned}
		T_{(\dolb_E+\beta,\Phi+\phi)}\mathcal L_{\mathcal E}^U &= d_{(\beta,\phi)}p_{\mathcal E}(T_{(\beta,\phi)}\widehat{\mathcal S}_{\mathcal E}) \\[1ex]
		&\cong \left\{\left.(\dot\beta,\dot\phi) \in 
		\begin{array}{c}
			\Omega^{0,1}\left(\End_0 E \oplus \bigoplus_{i \ge 2}\End_i E\right) \\ 
			\oplus \\ 
			\Omega^{1,0}\left(\End_{-1} E \oplus \bigoplus_{i \ge 1}\End_i E\right)
		\end{array}
		\right| 
		\begin{array}{l}
			D''_{(\beta,\phi)}(\dot \beta, \dot \phi)=0 \\ 
			D'_{(\beta,\phi)}(\dot \beta,\dot\phi)=0
		\end{array}
		\right\}. 
	\end{aligned}
\end{equation}


Recalling that the symplectic form on $\mathcal M$ pairs weights $j$ and $1-j$ (\cite[Proposition 3.9]{GPGM}), it follows that this space is isotropic. \end{proof}


We now need to show that, given a component $F_\alpha$, all of these patches glue together to a global well-defined object which integrates the conormal spaces. The way we will accomplish this is by showing that these Lagrangian pieces are in fact charts for a subvariety of the attractor $W_\alpha^+$. It will consist of points which "arrive at their limit to $0$ with zero velocity". In order to be more precise, let $F_\alpha$ be a fixed point component and, for $(E,\Phi) \in W_\alpha^+$, define a curve $\gamma_{(E,\Phi)}: \C \to \mathcal{M}$ by 

\[
\begin{array}{crcl}
\gamma_{(E,\Phi)}: & \C & \longrightarrow & \mathcal{M} \\ 
& 0 \ne t &\longmapsto & t \cdot (E,\Phi) \\ 
& 0 & \longmapsto & \lim_{t \to 0} t \cdot (E,\Phi).
\end{array}
\]

This allows us to define the following (in the course of the proof of Proposition \ref{zv_lag}, we will see that it makes sense to take the derivative of $\gamma_{(E,\Phi)}$ at $t=0$).

\begin{deff}
	Let $F_\alpha$ be a fixed point component, we define the \emph{zero velocity locus} of its attractor $W_{\alpha}^+$ as
	\[\mathcal L_\alpha \coloneqq \{(E,\Phi) \in W_\alpha^+ \mid \gamma_{(E,\Phi)}'(0)=0\}.\]
\end{deff}

\begin{prop}\label{zv_lag}
Let $F_\alpha$ be a $\C^*$-fixed point component. Then, for each fixed point $\mathcal E \in F_\alpha$ and each neighborhood $U$ of $\mathcal E$ in the conditions of Proposition \ref{prop:hodge}, we have
	\[\mathcal L_\alpha \cap p_{\mathcal E}(U) = \mathcal L_{\mathcal E}^U.\]
\end{prop}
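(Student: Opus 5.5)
The plan is to make the $\C^*$-action explicit in the Hodge slice chart at a fixed point. There it becomes a linear action with weights. The attractor, the limit map and the velocity $\gamma'(0)$ can then all be read off from the graded components of $(\beta,\phi)$.

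\textbf{Linearization.} Since $\mathcal E=(\dolb_E,\Phi)$ is fixed and $\Phi\in\Omega^{1,0}(\End_{-1}E)$, there is a one-parameter family of gauge transformations $g_t$, acting by $t^{\mp k}$ on $E_k$. We normalize $g_t$ so that $\mathrm{Ad}_{g_t}$ acts on $\End_jE$ by $t^{-j}$. Then $g_t$ preserves $\dolb_E$ and sends $\Phi\mapsto t\Phi$.

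Because the harmonic metric of $\mathcal E$ makes the splitting $E=\bigoplus E_k$ orthogonal, $g_t$ for $|t|=1$ is unitary. Hence $D'$ and $D''$ are equivariant, and $g_t$ preserves the slice equations in \eqref{eq:hodge} for all $t\in\C^*$; the quadratic term $[\beta,\phi]$ is compatible with the grading. Composing the action $t\cdot(\dolb_E+\beta,\Phi+\phi)=(\dolb_E+\beta,t\Phi+t\phi)$ with $g_t^{-1}$, I expect
\[
t\cdot p_{\mathcal E}(\beta,\phi)=p_{\mathcal E}\Big(\textstyle\sum_j t^{j}\beta_j,\ \sum_j t^{j+1}\phi_j\Big),
\]
with the sign of the exponents fixed so that $\Phi$ itself has weight $0$. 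So on the slice the $\C^*$-action is linear. Its weight $w$ piece consists of $\beta_w\in\Omega^{0,1}(\End_wE)$ and $\phi_{w-1}\in\Omega^{1,0}(\End_{w-1}E)$, matching the complex $C_w^\bullet$ in \eqref{comp_decomp}.

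\textbf{Reading off the limit and the velocity.} For a point $(\beta,\phi)\in U$ with all weights $\ge 0$, the curve $t\mapsto t\cdot(\beta,\phi)$ stays in the slice for $|t|\le 1$. As $t\to 0$ it converges to the weight-$0$ part $(\beta_0,\phi_{-1})$. This is a fixed point, so by \eqref{BBweights} it lies in $F_\alpha$ near $\mathcal E$. Thus $\gamma$ is polynomial in $t$ in the chart, which justifies differentiating at $t=0$, and
\[
\gamma'(0)=d p_{\mathcal E}(\beta_1,\phi_0).
\]
Since $dp_{\mathcal E}$ is injective on $U$, we get $\gamma'(0)=0$ if and only if the weight-$1$ part vanishes, i.e.\ $(\beta,\phi)\in\widehat{\mathcal S}_{\mathcal E}$. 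This gives $\mathcal L_{\mathcal E}^U\subseteq\mathcal L_\alpha\cap p_{\mathcal E}(U)$.

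\textbf{Main obstacle: the reverse inclusion.} We must show that if $p_{\mathcal E}(\beta,\phi)\in W_\alpha^+$, then $(\beta,\phi)$ has no negative-weight components. The difficulty is that the orbit could leave $U$ before converging.

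The first thing I would try is a dimension-and-closure argument. The locus of nonnegative weights is a closed $\C^*$-stable submanifold of $U$. Its tangent space at $\mathcal E$ is $\bigoplus_{j\ge0}T^j$, which by the Białynicki-Birula theorem \cite{bia_bir} equals $T_{\mathcal E}W_\alpha^+$, and $W_\alpha^+$ is locally closed and smooth near $\mathcal E$. The nonnegative-weight locus is contained in $W_\alpha^+$, and both are smooth of the same dimension, so after shrinking $U$ they agree.

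If that fails, I would argue via properness of $h$. On a point with a nonzero negative-weight component $\beta_{-k}$ or $\phi_{-k-1}$, the Hitchin invariants must grow like $t^{-k}$ as $t\to 0$, contradicting the existence of the limit.

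Finally, independence of the velocity condition from the chart follows because the limit point lies in $F_\alpha\cap p_{\mathcal E}(U)$. There both the chart at $\mathcal E$ and the chart at the limit point are holomorphic, so vanishing of $\gamma'(0)$ is intrinsic.
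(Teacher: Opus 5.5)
Your proposal is correct and follows essentially the paper's argument: you linearize the $\C^*$-action on the Hodge slice via the gauge transformation of \cite[Proposition 4.2]{COLLIER20191193}, so that the limit is the weight-$0$ part $(\beta_0,\phi_{-1})$ and $\gamma'(0)$ is the weight-$1$ part $(\beta_1,\phi_0)$. The paper simply assumes that points of $W^+_\alpha\cap p_{\mathcal E}(U)$ have no negative-weight components, and your Bia\l{}ynicki-Birula dimension comparison justifies that step soundly; it holds only after shrinking $U$, a restriction the paper also tacitly needs.

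Discard the Hitchin-map fallback, however. We have $h(t\cdot x)=(t^2a_2,\ldots,t^na_n)\to 0$ for every $x$, and the limit always exists by properness. So the Hitchin invariants cannot detect negative weights in the slice.
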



\begin{proof}

Letting $\mathcal E = (\dolb_E,\Phi)$, a point in $p_{\mathcal E}(U)$ is of the form 
	\[(\dolb_E+\beta,\Phi+\phi),\]
with 
$(\beta,\phi) \in \Omega^{0,1}\left( \bigoplus_{i \ge 0}\End_i E\right)\oplus \Omega^{1,0}\left( \bigoplus_{i \ge -1}\End_i E\right)$.
Denoting by $\gamma_{(\beta,\phi)}$ the curve starting at $p_{\mathcal E}(\beta,\phi)$, we have, using the gauge transformation in \cite[Proposition 4.2]{COLLIER20191193},
\[
\gamma_{(\beta,\phi)}(t)= (\dolb_E+\beta,t\Phi+t\phi) \cong \left(\dolb_E + \beta_0 + \sum_{j \ge 1}t^j\beta_j,\Phi + \phi_{-1} + \sum_{j\ge 0}t^{j+1} \phi_j\right). 
\] Note that at $t=0$, we have \[\gamma_{(\beta,\phi)}(0)=(\dolb_E+\beta_0,\Phi + \phi_{-1})=\lim_{t\to 0} t\cdot (\dolb_E + \beta,\Phi + \phi).\]

Taking the derivative at $t=0$ we get
\[\gamma'_{(\beta,\phi)}(0)=(\beta_1,\phi_0),\] from where it follows that, recalling \eqref{new_slice},
\[\gamma'_{(\beta,\phi)}(0)=0 \iff (\beta,\phi) \in \widehat{\mathcal S}_{(\beta,\phi)}.\qedhere\] \end{proof} 



\begin{theorem}
	\label{thm:A}
	\ThmA
\end{theorem}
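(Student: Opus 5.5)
The plan is to assemble Theorem \ref{thm:A} from the local statements, Propositions \ref{local_lag} and \ref{zv_lag}, together with the Białynicki-Birula fibration \eqref{fibration}.

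\emph{Step 1: $\C^*$-invariance and tangent space.} If $(E,\Phi)\in\mathcal L_\alpha$ and $s\in\C^*$, then $\gamma_{s\cdot(E,\Phi)}(t)=\gamma_{(E,\Phi)}(st)$. Hence $\gamma'_{s\cdot(E,\Phi)}(0)=s\,\gamma'_{(E,\Phi)}(0)=0$, so $\mathcal L_\alpha$ is $\C^*$-invariant. For the tangent space at $\mathcal E\in F_\alpha$, Proposition \ref{zv_lag} lets us compute in the chart $\mathcal L^U_{\mathcal E}=p_{\mathcal E}(\widehat{\mathcal S}_{\mathcal E}\cap U)$. At the origin the equations in \eqref{new_slice} linearize to $D''(\beta,\phi)=0$ and $D'(\beta,\phi)=0$, with $(\beta,\phi)$ restricted to the graded pieces allowed in \eqref{new_slice}. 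Since $D''$ and $D'$ respect the grading (with $\Phi$ of degree $-1$), this linearization is the direct sum of the harmonic representatives of $\mathbb H^1(C^\bullet_j)$ over the admissible $j$. Concretely, weight $j$ uses $\beta\in\Omega^{0,1}(\End_j)$ and $\phi\in\Omega^{1,0}(\End_{j-1})$. The constraints allow $\beta$ in degrees $0$ and $\ge2$ and $\phi$ in degrees $-1$ and $\ge1$, so the admissible weights are exactly $j=0$ and $j\ge2$. This gives $T_{\mathcal E}\mathcal L_\alpha=T^0_{\mathcal E}\mathcal M\oplus\bigoplus_{j\ge2}T^j_{\mathcal E}\mathcal M$. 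One still has to check that $\widehat{\mathcal S}_{\mathcal E}$ is a smooth submanifold near $0$ with exactly this tangent space. I would argue this as in \cite{COLLIER20191193}, via the implicit function theorem on the Kuranishi-type equation restricted to graded pieces, using that $\mathcal E$ is stable so $\mathbb H^2$ vanishes in the traceless setting.

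\emph{Step 2: Lagrangian.} Proposition \ref{local_lag} shows that each patch is isotropic. By \eqref{weight_pair}, $\dim T^j=\dim T^{1-j}$, so $\dim T^0+\sum_{j\ge2}\dim T^j=\dim T^1+\sum_{j\le -1}\dim T^j$. Each side is therefore half of $\dim\mathcal M$, and the patch is Lagrangian near $\mathcal E$. To get this at every point of $\mathcal L_\alpha$, not just near $F_\alpha$, I use $\C^*$-invariance. Every $x\in\mathcal L_\alpha$ satisfies $t\cdot x\to\mathcal E\in F_\alpha$, so some $t\cdot x$ lies in a patch $p_{\mathcal E}(U)$. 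The action of $t$ is a biholomorphism scaling $\omega$ by $t$, so it preserves being Lagrangian. It also shows $\mathcal L_\alpha$ is smooth, and locally closed as a subvariety of the locally closed attractor $W^+_\alpha$.

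\emph{Step 3: affine fibration.} Restrict $\pi^+_\alpha$ to $\mathcal L_\alpha\to F_\alpha$. In the chart of Proposition \ref{zv_lag}, the limit map is $(\beta,\phi)\mapsto(\beta_0,\phi_{-1})$. The zero-velocity condition is $\beta_1=0$ and $\phi_0=0$, which is exactly the weight-one part of the $t$-expansion. So inside each Białynicki-Birula fiber, which is $\C^*$-equivariantly an affine space with positive weights, $\mathcal L_\alpha$ is cut out by the vanishing of the weight-$1$ coordinates. In equivariant affine coordinates on the fiber $W^+_{\mathcal E}\cong\bigoplus_{j>0}T^j_{\mathcal E}\mathcal M$ as in \cite{bia_bir}, the zero-velocity condition says precisely that the $T^1$-component vanishes. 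Indeed, $\gamma'(0)$ is the derivative of the orbit $t\mapsto(t^jv_j)_j$ at $t=0$, which equals $v_1$. The fiber of $\mathcal L_\alpha$ over $\mathcal E$ is therefore the linear subspace $\bigoplus_{j\ge2}T^j_{\mathcal E}\mathcal M$. Because the Białynicki-Birula fibration is Zariski-locally trivial with $\C^*$-linear transition functions, the weight-$\ge2$ subbundle is preserved, so $\mathcal L_\alpha\to F_\alpha$ is an affine subfibration.

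The main obstacle is Step 3's compatibility. Transition maps of a Białynicki-Birula fibration are equivariant polynomial maps, not necessarily linear. An equivariant polynomial automorphism can send a weight-$1$ coordinate to itself plus nothing else, since products of positive weights have weight at least $2$. It can, however, mix weight-$\ge2$ coordinates with products of weight-$1$ coordinates. This still preserves the locus where the weight-$1$ coordinates vanish, and on that locus it restricts to an affine automorphism, so the argument should go through. I would make this precise by checking that the intrinsic definition via $\gamma'(0)$ matches the vanishing of the weight-$1$ coordinates in any equivariant chart.
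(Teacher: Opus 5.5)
Your proof is correct, but it is organized differently from the paper's. The paper gets half-dimensionality from $\dim\mathcal L_\alpha=\dim\mathcal L_{\mathcal E}+\dim F_\alpha$. It restricts the equivariant Kuranishi biholomorphism $\mathcal S^+_{\mathcal E}\cong\mathbb H^1_+(\mathcal E)$ of \cite{COLLIER20191193} to weights $>1$ and compares the result with $W^+_{\mathcal E}$, which is half-dimensional by \cite[Lemma 4.1]{COLLIER20191193}. Its fibration claim is read off from $\mathcal L_{\mathcal E}=p_{\mathcal E}(\widehat{\mathcal S}_{\mathcal E}\cap\mathcal S^+_{\mathcal E})$ together with $\C^*$-invariance. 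You differ in three places:
\begin{itemize}
\item You read the dimension off $T_{\mathcal E}\mathcal L_\alpha$ using only the pairing \eqref{weight_pair}. The paper uses this pairing only implicitly, as $\dim T^0_{\mathcal E}\mathcal M=\dim T^1_{\mathcal E}\mathcal M$.
\item You use that $t\in\C^*$ scales $\omega$ by $t$ to carry the Lagrangian property from the patches near $F_\alpha$ to all of $\mathcal L_\alpha$. Propositions \ref{local_lag} and \ref{zv_lag} only see a neighborhood of $F_\alpha$, and the paper leaves this step implicit, so your version is more complete here.
\item You identify each fiber intrinsically as the zero locus of the weight-$1$ coordinate in an equivariant Bia\l{}ynicki-Birula chart. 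This is more transparent than the paper's one-line fibration argument.
\end{itemize}

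Your smoothness claim for $\widehat{\mathcal S}_{\mathcal E}$ near $0$ rests on the allowed weights $\{0\}\cup\{j\ge 2\}$ being closed under addition. Because of this, the quadratic term $[\beta,\phi]$ and the equivariant Kuranishi map send these graded pieces to themselves, so restricting the slice equations to them is consistent. You should state this explicitly.

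Your ``main obstacle'' is not a real one. An equivariant trivialization $(\pi^+_\alpha)^{-1}(U)\cong U\times\bigoplus_{j>0}T^j_{\mathcal E}\mathcal M$ restricts directly to a trivialization $U\times\bigoplus_{j\ge 2}T^j_{\mathcal E}\mathcal M$ of $\mathcal L_\alpha$. The induced transition maps are triangular polynomial automorphisms of this affine space. They need not be affine-linear: a weight-$4$ coordinate can pick up a quadratic term in the weight-$2$ coordinates. So ``restricts to an affine automorphism'' is only right in the sense of an automorphism of affine space, but that is all an affine fibration requires.
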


\begin{proof}
	By Propositions \ref{local_lag} and \ref{zv_lag}, it remains to show that $\mathcal L_\alpha$ is half dimensional.
	
	Recall the affine fibration given by \eqref{fibration}. Clearly, $\mathcal L_\alpha \subset W_{\alpha}^+$ and for a fixed point $\mathcal E \in F_\alpha$, we have
	\[\mathcal L_\alpha \cap W_{\mathcal E}^+ = p_{\mathcal E}(\widehat{S}_{\mathcal E}\cap \mathcal S^+_{\mathcal E}) \subset p_{\mathcal E}(\mathcal S^+_{\mathcal E}) = W_{\mathcal E}^+.\] Since $\mathcal L_{\mathcal E} \coloneqq \mathcal L_\alpha \cap W_{\mathcal E}^+$ is $\C^*$-invariant, it follows that $\mathcal L_\alpha$ is an affine subfibration of $W_{\alpha}^+$. 
	
Thus to compute the dimension of $\mathcal L_\alpha$, we use 
	\begin{equation} \label{dim_L}	
		\dim \mathcal L_\alpha = \dim \mathcal L_{\mathcal E} + \dim F_\alpha. 
	\end{equation}

	We use the biholomorphism in \cite[Theorem 3.9]{COLLIER20191193} between the BB-slice $\mathcal S^+_{\mathcal E}$ and the positive weight part of the tangent space $\mathbb H^1_+(\mathcal E)$, given by the Kuranishi map. Since the map is equivariant, it restricts to a biholomorphism betweent the parts with weight strictly bigger then $1$, i.e., $\widehat{\mathcal S}_{\mathcal E} \cap \mathcal S^+_{\mathcal E}$ and $\bigoplus_{j>1} \mathbb H^1_j(\mathcal E)$. Thus, equation \eqref{dim_L} becomes
	\begin{equation*}
		\dim \mathcal{L}_\alpha = \dim\bigl(\widehat{\mathcal{S}}_{\mathcal{E}} \cap \mathcal{S}^+_{\mathcal{E}}\bigr) + \dim F_\alpha \nonumber 
		= \sum_{j > 1} \dim\bigl(\mathbb{H}^1_j(\mathcal{E})\bigr) + \dim\bigl(\mathbb{H}^1_0(\mathcal{E})\bigr) \nonumber 
		= \dim \bigl(W_{\mathcal{E}}^+\bigr).
	\end{equation*}
	
	Since the upward flow $W_{\mathcal E}^+$ is half-dimensional by \cite[Lemma 4.1]{COLLIER20191193}, it follows that the same is true for $\mathcal L_\alpha$. \qedhere
	
\end{proof}

\begin{figure}[htbp]
	\centering
	\begin{subfigure}[b]{0.48\textwidth}
		\centering
		\includegraphics[width=\textwidth]{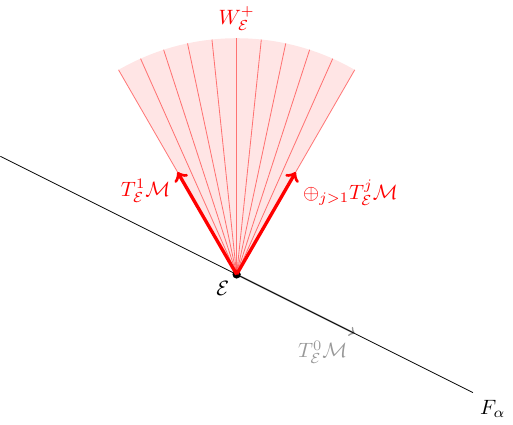} 
		\caption{The upward flow at a fixed point $\mathcal E \in F_\alpha$.}
		\label{fig:left_image}
	\end{subfigure}
	\hfill 
	\begin{subfigure}[b]{0.48\textwidth}
		\centering
		\includegraphics[width=\textwidth]{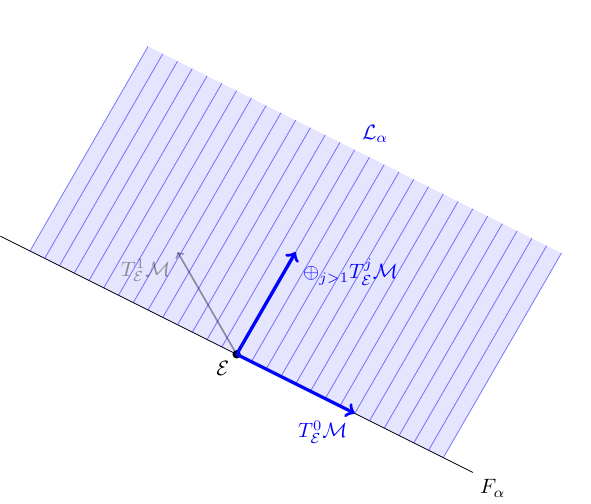} 
		\caption{Lagrangian $\mathcal L_\alpha$ around a fixed point $\mathcal E \in F_\alpha$.}
		\label{fig:right_image}
	\end{subfigure}
	
	\caption{Comparison between an upward flow $W^+_\mathcal E$ and the Lagrangian $\mathcal L_\alpha$.}
	\label{fig:global_label}
\end{figure}

\begin{example}\label{SU_pq}
The simplest example of non-trivial fixed points in rank $n$ are the ones of type $(p,q)$, with $p+q=n$ and $p,q \ge 1$, i.e., direct sums $E_0 \oplus F_0$, with $\rk E_0 = p$, $\rk F_0 = q$ and Higgs field $\Phi:E_0 \to F_0K$. A component $F_\alpha$ of fixed points of this type is obtained by fixing the degree of $E_0$ and $F_0$ and in this case the Lagrangian $\mathcal L_\alpha$ is made of points of the form
\[\left(E \oplus F, \begin{psmallmatrix}
	0 & \beta \\
	\gamma & 0 
\end{psmallmatrix} \right),\] with $\deg E = \deg E_0$, $\deg F = \deg F_0$, $\gamma \in H^0(E^*FK)$ and $\beta \in H^0(F^* EK)$. These are precisely $\SU(p,q)$-Higgs bundles sitting inside $\mathcal M$. 
\end{example}

\begin{rmk}\label{not_closed}
	The Lagrangian $\mathcal L_\alpha$ is closed in $W_\alpha^+$ but since the latter is only locally closed, in general $\mathcal L_\alpha$ is not closed in $\mathcal M$. For instance, in Example \ref{SU_pq}, the Lagrangians $\mathcal L_\alpha$ we construct contain in their closure Higgs bundles flowing to fixed points of other types other than $(p,q)$.
	
	Moreover, we will later see in Section \ref{z_nilpotents} that $\mathcal L_\alpha$ can intersect the nilpotent cone non-trivially, i.e., at a non-fixed point, and so, being $\C^*$-invariant, it will contain fixed points of other components in its closure. 

\end{rmk}

\subsection{The Simpson filtration}

 Given a point $(\dolb_E,\Phi) \in \mathcal M$, we can use the smooth splitting of its limit point to $0$ (described in Proposition \ref{prop:fixed}) and Proposition \ref{zv_lag} to conclude that it is in $\mathcal L_\alpha$ if and only if can be represented as

\begin{equation} \label{zv_slice}
\dolb_E = \begin{pmatrix}
	\dolb_{E_1} & 0 & * &  *  \\ 
	& \ddots & \ddots & *\\ 
	& & \ddots & 0 \\ 
	& & &\dolb_{E_l}
\end{pmatrix}, \quad \Phi = \begin{pmatrix}
	0 & * & * &  *\\ 
	\Phi_1 & 0 & *&* \\ 
	& \ddots & \ddots & *\\ 
	& & \Phi_{l-1} & 0
\end{pmatrix}.
\end{equation}
From this representation, we can immediately see what is the limit of $t\cdot (\dolb_E,\Phi)$ as $t$ goes to $0$. It will however be useful to be able to identify the limit without resorting to the slice. To do that, we need the following concept.

\begin{deff}[\cite{simpsonIDMVBC}]
	Let $(E,\Phi)$ be a Higgs bundle on $X$. A filtration $E_\bullet$ of $E$ by strict subbundles
	
	\[0 = E_0 \subset E_1 \subset \cdots \subset E_k = E\] is said to be a \emph{Simpson filtration} if the following are satisfied:
	\begin{enumerate}
		\item (Griffiths transversality) for each $i=0,\ldots,k-1$, $\Phi(E_i)\subset E_{i+1} K$;
		\item (graded semistability) the associated graded Higgs bundle,
		\[\left(\bigoplus_{i=1}^{k} E_i/E_{i-1}, \overline{\Phi}\right),\] is semistable, where $\overline{\Phi}$ is the induced Higgs field satisfying $\overline{\Phi}(E_i/E_{i-1})\subset E_{i+1}/E_{i}K$.  
	\end{enumerate}
\end{deff} 

The role of the Simpson filtration in determining the limit point of the $\C^*$-action lies in the following result, first shown in the context of flat bundles in \cite{simpsonIDMVBC} and later for Higgs bundles in \cite[Proposition 4.2]{COLLIER20191193} and \cite[Proposition 3.4]{HH}. 

\begin{theorem}
	Let $(E,\Phi) \in \mathcal M$. Then 
	\[\lim_{t \to 0} t \cdot (E,\Phi) = \mathcal E\] if and only if $\mathcal E$ is isomorphic to the associated graded of the Simpson filtration of $(E,\Phi)$. 
\end{theorem}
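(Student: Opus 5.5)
The plan is to prove the ``if'' direction with an explicit one-parameter family of gauge transformations, and then obtain the ``only if'' direction from separatedness of $\mathcal M$ together with the existence of a Simpson filtration.

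\textbf{Step 1: the filtration as a degeneration.} Let $E_\bullet$ be a Simpson filtration of $(E,\Phi)$. Choose a smooth splitting $\mathbb E \cong \bigoplus_{i=1}^k G_i$ with $G_i \cong E_i/E_{i-1}$, for instance by taking $G_i$ to be the orthogonal complement of $E_{i-1}$ in $E_i$ for some hermitian metric. Because each $E_i$ is a holomorphic subbundle, $\dolb_E$ is block upper triangular with respect to this splitting. Its diagonal blocks are the Dolbeault operators of the graded pieces $E_i/E_{i-1}$. The only other nonzero blocks are second fundamental forms in $\Omega^{0,1}(\Hom(G_j,G_i))$ with $i<j$. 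Griffiths transversality $\Phi(E_i)\subset E_{i+1}K$ says that $\Phi$ has blocks $\Hom(G_j,G_i)\otimes K$ only for $i\le j+1$. Among these, the blocks with $i=j+1$ are exactly the components of the induced map $\overline\Phi$.

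\textbf{Step 2: rescaling.} Consider the gauge transformation $g_t=\bigoplus_i t^{i}\,\id_{G_i}$, normalised by a scalar so that it has determinant $1$ in the $\SL(n,\C)$ setting. Conjugating by $g_t$ multiplies a block in $\Hom(G_j,G_i)$ by $t^{i-j}$.
\begin{itemize}
\item The $\dolb$-blocks with $i<j$ therefore get multiplied by a positive power of $t^{-1}$; with the opposite normalisation $g_t=\bigoplus t^{-i}\id_{G_i}$ they are multiplied by $t^{j-i}$ and tend to $0$.
\item With that normalisation, the blocks of $t\Phi$ are multiplied by $t^{1+j-i}$. This exponent equals $0$ exactly on the subdiagonal $i=j+1$ and is positive on every other block.
\end{itemize}
Hence $g_t\cdot(\dolb_E,t\Phi)$ extends holomorphically to $t\in\C$, and its value at $t=0$ is the associated graded $\bigl(\bigoplus E_i/E_{i-1},\overline\Phi\bigr)$. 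This graded object is semistable by hypothesis, so every member of the family is semistable (semistability is open, and the members with $t\neq0$ are isomorphic to $(E,t\Phi)$). The family thus defines an algebraic map $\C\to\mathcal M$ that extends the orbit map $t\mapsto t\cdot(E,\Phi)$. Since $\mathcal M$ is separated, the limit $\lim_{t\to0}t\cdot(E,\Phi)$ equals the image of the associated graded in $\mathcal M$, that is, its polystable representative. This proves the ``if'' direction, read up to S-equivalence, which is precisely isomorphism in $\mathcal M$.

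\textbf{Step 3: the converse and existence.} For the ``only if'' direction it suffices to know that $(E,\Phi)$ admits at least one Simpson filtration. Step 2 then identifies its graded object with the limit, and the limit is unique. Existence is the main obstacle. I would follow Simpson's iterative construction.
\begin{enumerate}
\item Start from the trivial filtration $0\subset E$ if $(E,\Phi)$ is semistable, and otherwise from the Harder--Narasimhan filtration of the underlying bundle, suitably coarsened so that it is Griffiths transverse.
\item If the associated graded Higgs bundle is not semistable, take its maximal destabilizing $\overline\Phi$-invariant subsheaf.
\item Use that subsheaf to refine and shift the filtration: enlarge each $E_i$ by the preimage of the destabilizing piece in the next graded quotient. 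This keeps Griffiths transversality.
\end{enumerate}
To show the procedure terminates, I would attach to each filtration a numerical invariant, such as the weighted sum $\sum_i i\cdot\deg(E_i/E_{i-1})$ or the Harder--Narasimhan polygon of the graded object. One then checks that the invariant strictly increases at each step and is bounded above, using the bound on the degrees of subsheaves of $E$. This boundedness and monotonicity check is where I expect the real work to lie.
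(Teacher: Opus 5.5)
Your Steps 1 and 2 are correct, and they are the rescaling argument behind the results the paper cites (\cite[Proposition 4.2]{COLLIER20191193}, \cite[Proposition 3.4]{HH}). The paper itself gives no proof: it relies on those references together with Simpson's existence result \cite{simpsonIDMVBC}. Your Step 2 shows that the graded object of \emph{any} Simpson filtration is the limit, so you are right that both directions reduce to the existence of one Simpson filtration. The determinant normalisation of $g_t$ would require an $n$-th root of $t$, but you do not need it, since scalars act trivially by conjugation.

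The genuine gap is Step 3. It is not only that termination is left unchecked: the mechanism you propose cannot work, because it never uses the semistability of $(E,\Phi)$. Without that hypothesis no Simpson filtration exists. By your own Step 2 and openness of semistability, a Griffiths transverse filtration with semistable graded object forces $(E,t\Phi)$, hence $(E,\Phi)$, to be semistable.

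Here is a concrete failure. Take $E=L\oplus L^{-1}$ with $\deg L=d>0$, and let $\Phi$ have a single nonzero block $\beta\in H^0(L^2K)$ mapping $L^{-1}\to LK$. Then $\Phi(L)=0$, so $(E,\Phi)$ is unstable. Your iteration runs as follows.
\begin{itemize}
\item The trivial filtration $0\subset E$ has graded object $(E,0)$, destabilised by $L$.
\item The next filtration is $0\subset L\subset E$. Its graded object is $(L\oplus L^{-1},0)$ because $\Phi|_L=0$, and it is again destabilised by $L$ in the first slot.
\item Your modification rule then produces $0\subset L\subset L\subset E$, and so on.
\end{itemize}
The invariant $\sum_i i\deg(E_i/E_{i-1})$ takes the values $0,-d,-2d,\dots$. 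It is monotone but unbounded, because the number of filtration steps grows by one each time. A bound on the degrees of subsheaves of $E$ therefore does not bound it. Discarding repeated terms does not help either: the process just cycles between $0\subset L\subset E$ and $0\subset L\subset L\subset E$.

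What is missing is an argument that uses semistability of $(E,\Phi)$ to rule out a destabilising piece being pushed down the filtration indefinitely. In the example the obstruction is exactly the $\Phi$-invariant subbundle $L$ of positive degree. This is the content of Simpson's termination theorem \cite{simpsonIDMVBC}, which the paper relies on by citation; you should either invoke it or supply such an argument.

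Relatedly, your case split in Step 3(1) is misplaced. The trivial filtration has graded object $(E,0)$, so the relevant condition is semistability of the bundle $E$, whereas $(E,\Phi)\in\mathcal M$ is always semistable.

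When the limit is a stable fixed point $\mathcal E$, you can get existence another way, from the BB-slice description of $W^+_{\mathcal E}$ in \cite{COLLIER20191193}. Writing $(E,\Phi)=p_{\mathcal E}(\beta,\phi)$ with $\beta\in\Omega^{0,1}(\End_{>0}E)$ and $\phi\in\Omega^{1,0}(\End_{\ge 0}E)$ exhibits $E_1\oplus\cdots\oplus E_i$ directly as a Griffiths transverse holomorphic filtration with graded object $\mathcal E$. This does not cover strictly semistable limits, however.
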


A natural question that arises is then how to detect if a Higgs bundle is in $\mathcal L_\alpha$ via its Simpson filtration. Analyzing \eqref{zv_slice}, we get the following characterization.

\begin{prop}\label{zv_simp}
	Let $(E,\Phi) \in \mathcal M$, with $E_\bullet$ its Simpson filtration and $(\bigoplus_i F_i,\overline \Phi)$ the corresponding associated graded object. Then $(E,\Phi) \in \mathcal L_\alpha$ if and only if (setting $F_0 = E_{-1} = 0$)
	
	\begin{enumerate}
		\item (bundle conditions) for all $i\ge 1$, we have 
			$E_i/E_{i-2} \cong F_{i-1}\oplus F_i;$
		equivalently the following sequence
		\begin{equation} \label{zv_seq}
			\begin{tikzcd}
				0 \arrow[r] & E_{i-2} \arrow[r] & E_i \arrow[r] & F_{i-1} \oplus F_i \arrow[r] & 0
			\end{tikzcd}
		\end{equation} is exact;
		
		\item (field conditions) 
		for each $i \ge 1$, the composition
		\begin{equation}\label{zv_higgs}
				\begin{tikzcd}
				F_{i} \arrow[r] & E_i/E_{i-2} \arrow[r,"\Phi^*_i"] & E_{i+1}/E_{i-1} \arrow[r] & F_i 
			\end{tikzcd}
		\end{equation}
		is the zero map, where $\Phi_i^*$ is the natural map induced by $\Phi$ and the remaining ones are the natural injection and projection associated to the direct sum in \eqref{zv_seq}. 
		
	\end{enumerate}
\end{prop}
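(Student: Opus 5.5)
The plan is to translate the slice description \eqref{zv_slice} of $\mathcal L_\alpha$ into intrinsic statements about the Simpson filtration, in both directions.

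\emph{Forward direction.} Suppose $(E,\Phi)\in\mathcal L_\alpha$. By Proposition \ref{zv_lag} we may write it in the form \eqref{zv_slice} with respect to the smooth splitting $E=E_1\oplus\cdots\oplus E_l$ of the limit $\mathcal E$. Here the diagonal blocks are exactly $\dolb_{E_j}$ and $\Phi_j$, because the weight $0$ part of $\beta$ and the weight $-1$ part of $\phi$ can be absorbed by moving the base point within $F_\alpha$.

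First I check that the flag $\mathcal F_i=E_l\oplus\cdots\oplus E_{l-i+1}$ (suitably reindexed so that $\Phi$ raises the index) is a filtration by holomorphic subbundles. This holds because $\dolb_E$ is block upper triangular in the appropriate ordering.

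Next I check that this flag is Griffiths transversal, since $\Phi$ has only one nonzero subdiagonal. Its associated graded is $\mathcal E$, which is semistable. By the theorem characterizing limits through the Simpson filtration, together with uniqueness of the Simpson filtration up to the standard ambiguity, this is the Simpson filtration and $F_i\cong E_{l-i+1}$ holomorphically.

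Both conditions can then be read off:
\begin{itemize}
\item The vanishing of the first superdiagonal of $\dolb_E$ (the $\End_1$ part of $\beta$) means that on $E_i/E_{i-2}$ the Dolbeault operator is block diagonal. This gives the splitting $E_i/E_{i-2}\cong F_{i-1}\oplus F_i$, i.e.\ exactness of \eqref{zv_seq}.
\item The vanishing of the diagonal of $\phi$ (the $\End_0$ part) is exactly the statement that the induced map $E_i/E_{i-2}\to E_{i+1}/E_{i-1}\otimes K$, restricted to $F_i$ and projected back to $F_i$, is zero. This is condition \eqref{zv_higgs}.
\end{itemize}

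\emph{Converse.} Given the Simpson filtration with splittings \eqref{zv_seq}, choose a smooth splitting $E\cong\bigoplus F_i$ compatible with the filtration. I want the $C^\infty$ splitting to restrict, on each $E_i/E_{i-2}$, to the given holomorphic splitting. To arrange this, I build the splitting inductively. Start from a holomorphic splitting of $E_i/E_{i-2}$. Lift the complement of $F_{i-1}$ smoothly, choosing the lift of $F_i$ inside $E_i$ so that its image in $E_i/E_{i-2}$ is the holomorphic copy of $F_i$. This requires that the choices for consecutive $i$ be compatible. They are, because each step only constrains the component modulo $E_{i-2}$, which is exactly the freedom left over from the previous step.

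In such a splitting, $\dolb_E$ is upper triangular with diagonal $\dolb_{F_i}$ and vanishing first superdiagonal. The Higgs field $\Phi$ has subdiagonal $\overline\Phi$, and by \eqref{zv_higgs} its diagonal vanishes. So we are in the form \eqref{zv_slice}. The computation in the proof of Proposition \ref{zv_lag} then gives $\gamma'(0)=(\beta_1,\phi_0)=0$, and the limit lies in $F_\alpha$ by the Simpson filtration theorem.

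\emph{Main obstacle.} I expect the delicate point to be this compatibility of the inductive smooth splittings in the converse. A second subtlety is that the argument in Proposition \ref{zv_lag} was phrased for slice representatives, which are harmonic gauge-fixed. I would handle this by noting that the vanishing of $\gamma'(0)$ is gauge invariant: the scaling gauge $g_t=\mathrm{diag}(t^{k})$ applied to any representative of the form \eqref{zv_slice} produces a curve whose $t$-derivative at $0$ is exactly the pair of weight $1$ data, independently of the gauge fixing.
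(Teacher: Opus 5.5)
Your proposal is correct and is essentially the paper's argument: the paper gives no separate proof, only the remark that the characterization is read off from the normal form \eqref{zv_slice}, and you supply that translation in both directions. Two small fixes are needed. First, with the paper's conventions ($\dolb_E$ block upper triangular and $\Phi_j\colon E_j\to E_{j+1}K$), the holomorphic Griffiths-transverse flag is $E_1\subset E_1\oplus E_2\subset\cdots$, so $F_i\cong E_i$ rather than $E_{l-i+1}$. Second, the compatibility you single out as the main obstacle is vacuous: the splitting of $E_i/E_{i-2}$ only constrains the smooth complement $G_i$ of $E_{i-1}$ in $E_i$ modulo $E_{i-2}$, so the $G_i$ can be chosen independently of one another.
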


\section{\texorpdfstring{Lagrangian for type $(1,\ldots,1)$ fixed point components}{Lagrangian for type (1,...,1) fixed point components}} \label{main}


From now on we will specialize to Lagrangians $\mathcal L_\alpha$ associated to fixed point components $F_\alpha$ of type $(1,\ldots,1)$, i.e., sums of line bundles, and how they interact with the fibers of the Hitchin map $h$ in \eqref{hitchin_map}. 

We first review, in subsection \ref{4_1}, the basics on fixed points of type $(1,\ldots,1)$. In subsection \ref{4_2}, we introduce our main technical tool, Hecke transformations, and how they interact with upward flows.

In subsection \ref{z_nilpotents} we will consider the intersection of the Lagrangian and the nilpotent cone---the fiber over 0---namely we will characterize the fibers $\mathcal L_{\mathcal E}$ which intersect the nilpotent cone non-trivially (i.e., at a point other than the fixed point). 

Later, in section \ref{int_generic} we will be concerned with the generic fibers of $h$. We will see that they intersect the Lagrangian $\mathcal L_\alpha$ in a finite number of points, which will be computed. Knowing that number, we then propose a candidate in section \ref{mirror} for what the mirror BBB-brane should be in the dual moduli space. 

\subsection{Preliminaries: fixed points of type $(1,\ldots,1)$} \label{4_1}

A fixed point of type $(1,\ldots,1)$ is of the form \[(L_1 \oplus \cdots \oplus L_n,\Phi)\] where each $L_i$ is a line bundle over $X$ and  \[\Phi_i = \Phi|_{L_i} \in H^0(L_i^*L_{i+1}K).\] We will also use the following chain notation to denote such a fixed point (where the arrows are to be interpreted as $K$-twisted maps):
\begin{equation}\label{chain}
	\begin{tikzcd}
		L_1 \arrow[r,"\Phi_1"] & L_2 \arrow[r,"\Phi_2"] & \cdots \arrow[r,"\Phi_{n-2}"] & L_{n-1} \arrow[r,"\Phi_{n-1}"] & L_n.
	\end{tikzcd}
\end{equation}

 Following \cite[Section 3.1]{HH}, if we write 
 \begin{equation}\label{divisors}
 	 L_1 = \mathcal O(\mathcal \delta_0) \text{ and } \delta_i = \div \Phi_i,
 \end{equation} each fixed point is determined by a choice of divisors $(\delta_0,\delta_1,\ldots,\delta_{n-1})$, where all but the first are required to be effective. Since we are fixing the determinant, a component $F_\alpha$ is obtained by fixing the degrees $m_i \coloneqq \deg \delta_i$, for $i\ge 1$, and we get a degree $n^{2g}$ covering of the product 
\[X^{[m_1]}\times \cdots \times X^{[m_{n-1}]},\] where $X^{[k]}$ denotes the $k$-th symmetric product of points of $X$. Indeed, after fixing the divisors $\delta_1$ through $\delta_{n-1}$, we can write $L_i = K^{-(i-1)}(\delta_0+\cdots+ \delta_{i-1})$, for $i\ge 1$, which allows us to rewrite the equation $L_1\cdots L_n \cong \mathcal O$ as
\[
L_1^n \cong K^{(n-1)n/2}(D),
\] where $D = -\sum_{i=1}^{n-1} (n-i)\delta_i$. Thus, solving for $L_1 = \mathcal O(\delta_0)$ involves taking the $n$-th root of a line bundle, which gives rise to $n^{2g}$ possible solutions (for a more detailed account, see the proof of \cite[Proposition 10.1]{HT}). 

 We will also write 
 \begin{equation}\label{delta_plus}
 	\delta_+ \coloneqq \delta_1 + \cdots + \delta_{n-1}
 \end{equation}
  and $\deg \Phi_i$ will mean $\deg \delta_i$.

\begin{example}\label{hitchin_sec}
	Setting each $m_i$ equal to $0$, we can assume $\Phi_i = 1$ and we obtain $n^{2g}$ fixed points of the form
	\[
	\mathcal E_L = \left( E_L = L \oplus L  K^{-1} \oplus \dots \oplus L K^{1-n} , \,
	\begin{pmatrix}
		0 &        &        &   \\
		1 & 0      &        &   \\
		& \ddots & \ddots &   \\
		&        & 1      & 0
	\end{pmatrix} \right)
	\] with $L^n \cong K^{n(n-1)/2}$. Thus in this case each fixed point component is reduced to a single point $F_\alpha = \{\mathcal E_L\}$. In particular we have $T^0_{\mathcal E_L}\mathcal M = T^1_{\mathcal E_L}\mathcal M = 0$ so that
	\[\mathcal L_\alpha = \mathcal L_{\mathcal E_L} = W_{\mathcal E_L}^+,\] i.e. the Lagrangian in this case coincides with the upward flow of $\mathcal E_L$. Such an upward flow is called a \emph{Hitchin section} and can be fully described as follows.
	
	Let $a=(a_2,\ldots,a_n)$ be a point in the Hitchin base $\mathcal A$ and consider the Higgs field given by the companion matrix 
	\[\Phi_a=
	\begin{pmatrix}
		0 & 0 & \dots & 0 & -a_n \\
		1 & 0 & \dots & 0 & -a_{n-1} \\
		0 & 1 & \ddots & 0 & -a_{n-2} \\
		\vdots & \vdots & \ddots & \ddots & \vdots \\
		0 & 0 & \dots & 1 & 0
	\end{pmatrix}.\] The assigment $a \mapsto ( E_L,\Phi_a)$ defines a section of the Hitchin map \eqref{hitchin_map} whose image is precisely $W_{\mathcal E_L}^+$ (shown in \cite{HIT3}). 
\end{example}

\subsection{Hecke transformations} \label{4_2}

One of the main technical tools we will use in Section \ref{main} will be that of a Hecke transformation of a Higgs bundle. In this subsection, we will summarize the main definitions and results. 

\begin{deff}
	Let $E$ be a holomorphic vector bundle over a Riemann surface $X$, let $c \in X$ and $V$ a subspace of the fiber $E_c$. The \emph{Hecke transform} $\mathcal H_V(E)$ of $E$ at $V$ is defined by the following short exact sequence of sheaves,
	\begin{equation}
		\begin{tikzcd}
			0 \arrow[r] & \mathcal H_V(E) \arrow[r] & E \arrow[r] & (E_c/V)_c \arrow[r] & 0, 
		\end{tikzcd}
	\end{equation} where by $U_c$, when $U$ is a vector space, we mean the torsion sheaf $U \otimes \mathcal O_c$. 
\end{deff}

The sheaf $\mathcal H_V(E)$ is the sheaf of sections of a vector bundle, again denoted by $\mathcal H_V(E)$ (or $E_V$ or even $E'$ when $V$ is understood). It can be interpreted as the subsheaf of sections of $E$ which at $c$ take values in $V \subseteq E_c$. In particular, if $V=0$, we get the more familiar object $E(-c)= E \otimes \mathcal O(-c)$ which fits in the exact sequence
\begin{equation}
	\begin{tikzcd}
		0 \arrow[r] & E(-c) \arrow[r] & E \arrow[r] & E_c \arrow[r] & 0.
	\end{tikzcd}
\end{equation} At the other extreme, if $V = E_c$, the exact sequence describes an isomorphism
\begin{equation}
	\begin{tikzcd}
		0 \arrow[r] & \mathcal H_V(E) \arrow[r] & E \arrow[r] & 0, 
	\end{tikzcd}
\end{equation} as expected, since there are no restrictions on the sections of $\mathcal H_V(E)$ in this case.

\begin{example}
	An important case we will be using is when $E$ has a direct sum decomposition $E = F_1 \oplus \cdots \oplus F_k$ and $V = \bigoplus_{i \in \Sigma} (F_i)_c$, where $\Sigma \subset \{1,\ldots,k\}$. Then $\mathcal H_V(E)=F'_1 \oplus \cdots \oplus F'_k$, with
	\begin{equation}
		F'_i = \begin{cases}
			F_i \text{ if } i \in \Sigma \\ 
			F_i(-c) \text{ if } i \notin \Sigma.
		\end{cases}
	\end{equation}
\end{example}

Now let $(E,\Phi)$ be a Higgs bundle over $X$, let $c \in X$ and consider $V \subseteq E_c$ a $\Phi_c$-invariant subspace. Letting $E'$ be the Hecke transform of $E$ at $V$, we get a commutative diagram
\begin{equation} \label{Hecke_higgs}
	\begin{tikzcd}
		0 \arrow[r] &  E' \arrow[r] \arrow[d,"\Phi'"] & E \arrow[r] \arrow[d,"\Phi"] & (E_c/V)_c \arrow[r] \arrow[d,"\overline \Phi"] & 0 \\ 
		0 \arrow[r] &  E'K \arrow[r] & EK \arrow[r] & (E_c/V)_c K \arrow[r] & 0 
	\end{tikzcd}
\end{equation} where $\Phi'$ and $\overline \Phi$ are the maps naturally induced by $\Phi$.

\begin{deff}
	Let $(E,\Phi)$ be a Higgs bundle over $X$, let $c \in X$ and $V \subseteq E_c$ be $\Phi_c$-invariant. Then the \emph{Hecke transform} of $(E,\Phi)$ at $V$, denoted by $\mathcal H_V(E,\Phi)$, is the Higgs bundle $(E',\Phi')$ induced by diagram \eqref{Hecke_higgs}.
\end{deff}

\begin{rmk}
	The Hecke transformed Higgs field $\Phi'$ of a Higgs bundle $(E,\Phi)$ agrees with $\Phi$ outside a point $c \in X$, so it has the same characteristic polynomial. In other words, we have $h(E,\Phi)=h(E',\Phi')$.
\end{rmk}

Now we specify to the case of type $(1,\ldots,1)$ fixed points. The following result due to \cite{HH} describes what happens to the upward flow $W_{\mathcal E}^+$, where $\mathcal E$ is a type $(1,\ldots,1)$ fixed point, when we perform a Hecke transformation at a point $c \in X$ satisfying $\Phi_1\cdots\Phi_{n-1}(c) \ne 0$.

\begin{prop}{\cite[Proposition 4.15]{HH}} \label{HH_hecke}
	Let $(E,\Phi) \in W_{\mathcal E}^+$, where $\mathcal E$ is a type $(1,\ldots,1)$ fixed point written in chain form as
	\[
	\begin{tikzcd}
		L_1 \arrow[r,"\Phi_1"] & L_2 \arrow[r,"\Phi_2"] & \cdots \arrow[r,"\Phi_{n-2}"] & L_{n-1} \arrow[r,"\Phi_{n-1}"] & L_n,
	\end{tikzcd}
	\] such that $\Phi_1 \cdots \Phi_{n-1}$ does not vanish at $c \in X$. Let $V \subset E_c$ be a $(n-i)$-dimensional and $\Phi_c$-invariant subspace. Then the Hecke transformed Higgs bundle $\mathcal H_V(E,\Phi)$ is in $W_{\mathcal E'}^+$, where $\mathcal E'$ is the type $(1,\ldots,1)$ fixed point given by
	 \[
	 \begin{tikzcd}
	 	L_1(-c) \arrow[r,"\Phi_1"] & \cdots  \arrow[r,"\Phi_{i-1}"] & L_i(-c) \arrow[r,"\Phi_{i}s_c"] & L_{i+1} \arrow[r,"\Phi_{i+1}"] & \cdots \arrow[r,"\Phi_{n-1}"] & L_n.
	 \end{tikzcd}
	 \] Here $s_c \in H^0(\mathcal O_X(c))$ is a defining section.
\end{prop}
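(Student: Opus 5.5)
The proof uses the Simpson-filtration description of upward flows, tracked through the Hecke modification. Take $(E,\Phi)\in W^+_{\mathcal E}$. The Simpson filtration of $(E,\Phi)$ is a full flag $0=E_0\subset E_1\subset\cdots\subset E_n=E$ with $E_k/E_{k-1}\cong L_k$, and $\Phi$ induces $\Phi_k$ on the graded pieces. By the theorem identifying the limit with the associated graded, it suffices to do two things for $(E',\Phi')=\mathcal H_V(E,\Phi)$: produce a full flag of $E'$ satisfying Griffiths transversality, and check that its associated graded is the chain $\mathcal E'$ described in the statement. That graded object is a chain of line bundles with all $\Phi'_k\neq 0$. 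Such a chain is stable whenever it arises as a degeneration of the stable pair $(E',\Phi')$, and it is semistable by Hitchin's criterion on degrees; the determinant bookkeeping is handled the same way.

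\textbf{Step 1: identify $V$ with $(E_{i})_c$.} At $c$ the maps $\Phi_1,\dots,\Phi_{n-1}$ are all nonzero. In a basis adapted to the flag, $\Phi_c$ is block lower triangular with nonzero subdiagonal and nilpotent-diagonal graded part. So $\Phi_c$ is a regular endomorphism and the flag $(E_k)_c$ is transverse to its invariant subspaces in the generic sense. I would first try the following claim: for a $\Phi_c$-invariant $V$ of dimension $n-i$, the subspace $V\cap (E_{k})_c$ has dimension $\max(0,k-i)$ for every $k$. This holds because $V$ is $\Phi_c$-invariant and $\Phi_c$ maps $(E_k)_c$ onto $(E_{k+1})_c$ modulo $(E_k)_c$. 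Consequently, if $V\cap (E_k)_c$ were too large, iterating $\Phi_c$ would force the dimension to exceed $n-i$.

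\textbf{Step 2: transport the filtration.} Define $E'_k := E_k\cap E'$ as subsheaves of $E$; these are saturated in $E'$, so they are subbundles. The quotient $E'_k/E'_{k-1}$ is a subsheaf of $L_k$ equal to $L_k$ or $L_k(-c)$, depending on whether the dimension of $V\cap (E_k)_c$ modulo $V\cap (E_{k-1})_c$ is $1$ or $0$. With the transversality from Step 1, the graded pieces are $L_k(-c)$ for $k\le i$ and $L_k$ for $k>i$. Griffiths transversality for $E'_\bullet$ is inherited from that of $E_\bullet$, since $\Phi'$ is the restriction of $\Phi$. The induced graded maps agree with $\Phi_k$ except at the junction $L_i(-c)\to L_{i+1}K$. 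There the map is $\Phi_i$ composed with the inclusion $L_i(-c)\hookrightarrow L_i$, which is exactly $\Phi_i s_c$.

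\textbf{Main obstacle.} The delicate point is Step 1, namely the precise position of $V$ relative to the flag. At first sight the splitting might be expected to be $(E_i)_c$-like, that is, a subspace. But $V$ has dimension $n-i$ and must be "complementary to the bottom". Since invariant subspaces of a regular nilpotent-like operator are generated by its top end, $V$ should instead be transverse to $(E_i)_c$ and meet $(E_k)_c$ in dimension $k-i$. This is precisely what makes the first $i$ line bundles get twisted by $-c$. I would verify this by a direct computation in a local frame adapted to the flag. Note that $\Phi_c$ need not be nilpotent, since $(E,\Phi)$ is only in the upward flow. However, its reduction to the graded pieces, together with the nonvanishing of the subdiagonal entries, gives a cyclic vector in $L_1$. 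From this, any invariant subspace is determined as the image of a polynomial in $\Phi_c$, which yields the dimension count.
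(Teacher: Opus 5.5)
Your Steps 1 and 2 are correct and follow the route the paper relies on. The paper only cites \cite[Proposition 4.15]{HH}, but in the proof of Proposition \ref{zv_hecke} it reuses exactly your two ingredients: the count $\dim V\cap(E_k)_c=\max\{0,k-i\}$ from \cite[Lemma 4.14]{HH}, and the comparison of $E'_k=E_k\cap E'$ with $E_k$ via the cokernel $(E_k)_c/V_k$. Your iteration argument for the count is sound once phrased as follows. If $w\in V\cap(E_k)_c\setminus(E_{k-1})_c$, then $\Phi_c w\in V\cap(E_{k+1})_c\setminus(E_k)_c$. Hence the unit jumps of $k\mapsto\dim V\cap(E_k)_c$ form a terminal segment, necessarily of length $n-i$. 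In particular $V$ is transverse to $(E_i)_c$ rather than identified with it, as your heading suggests.

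The gap is the final step: semistability of the graded object $\mathcal E'$, which is exactly what makes $E'_\bullet$ the Simpson filtration. Your justification fails on two counts.
\begin{enumerate}
\item Semistability is an open condition, so it passes from a degeneration to the nearby members, not from $(E',\Phi')$ to its degeneration.
\item You do not know that $(E',\Phi')$ is stable in the first place, since Hecke transforms need not preserve stability.
\end{enumerate}
The degree criterion is not automatic either. Since $\deg E'=-i$, the chain $\mathcal E'$ is stable iff $\mu(L'_k\oplus\cdots\oplus L'_n)<-i/n$ for all $k\ge 2$. For $k>i$ this reads $\deg(L_k\oplus\cdots\oplus L_n)<-i(n-k+1)/n$, which is stronger than the condition $<0$ supplied by stability of $\mathcal E$.

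Here is a concrete counterexample. Take $n=5$ and let $\mathcal E$ have $(\deg L_1,\ldots,\deg L_5)=(1,0,0,0,-1)$, so that $m_1=m_4=2g-3$ and $m_2=m_3=2g-2$. Every tail $L_k\oplus\cdots\oplus L_5$ with $k\ge 2$ has degree $-1$, so $\mathcal E$ is stable. Now take $(E,\Phi)=\mathcal E$, a point $c\notin\delta_+$, and $i=2$. Then $\Phi_c$ is regular nilpotent, so $V=(L_3\oplus L_4\oplus L_5)_c$ is forced and $\mathcal H_V(\mathcal E)=\mathcal E'$. This Higgs bundle contains the $\Phi'$-invariant subbundle $L_3\oplus L_4\oplus L_5$ of slope $-1/3>-2/5=\mu(E')$. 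So it is unstable and lies in no moduli space.

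What your argument does prove is that $\mathcal H_V(E,\Phi)$ carries a Griffiths-transversal filtration with associated graded $\mathcal E'$. To conclude $\mathcal H_V(E,\Phi)\in W^+_{\mathcal E'}$ you need the additional input that $\mathcal E'$ is stable. Given that, openness of stability in the $\C^*$-family degenerating $(E',\Phi')$ to $\mathcal E'$ along $E'_\bullet$ shows that $(E',\Phi')$ is stable with Simpson filtration $E'_\bullet$. The proposition as quoted must therefore be read either with that hypothesis or at the level of filtrations.
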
 

The following result says that, given a condition on the subspace $V$ at which the Hecke transformation is performed, the zero velocity locus $\mathcal L_{\mathcal E}$ of the initial upward flow is sent to the zero velocity locus of the target $\mathcal L_{\mathcal E'}$.

\begin{prop}\label{zv_hecke}
	Let $(E,\Phi) \in \mathcal L_{\mathcal E}$, where $\mathcal E$ is a type $(1,\ldots,1)$ fixed point written in chain form as
	\[
	\begin{tikzcd}
		L_1 \arrow[r,"\Phi_1"] & L_2 \arrow[r,"\Phi_2"] & \cdots \arrow[r,"\Phi_{n-2}"] & L_{n-1} \arrow[r,"\Phi_{n-1}"] & L_n,
	\end{tikzcd}
	\] such that $\Phi_1 \cdots \Phi_{n-1}$ does not vanish at $c \in X$. Let $V \subset E_c$ be a $(n-i)$-dimensional and $\Phi_c$-invariant subspace such that 
	\[\tr \Phi_c|_V = 0.\] 
	Then the Hecke transformed Higgs bundle $\mathcal H_V(E,\Phi)$ lies in $\mathcal L_{\mathcal E'}$, where $\mathcal E'$ is the type $(1,\ldots,1)$ fixed point given by
	\[
	\begin{tikzcd}
		L_1(-c) \arrow[r,"\Phi_1"] & \cdots  \arrow[r,"\Phi_{i-1}"] & L_i(-c) \arrow[r,"\Phi_{i}s_c"] & L_{i+1} \arrow[r,"\Phi_{i+1}"] & \cdots \arrow[r,"\Phi_{n-1}"] & L_n.
	\end{tikzcd}
	\]
\end{prop}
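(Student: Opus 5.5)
By Proposition \ref{HH_hecke}, $(E',\Phi')\coloneqq\mathcal H_V(E,\Phi)$ already lies in $W^+_{\mathcal E'}$. What remains is the zero velocity condition, and I will check it using the criterion of Proposition \ref{zv_simp}. The plan is to track the Simpson filtration of $(E,\Phi)$ through the Hecke transformation and show that the bundle and field conditions survive, with the trace condition on $V$ handling the only new contribution.

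\textbf{Setup.} Since $\mathcal E$ is of type $(1,\ldots,1)$, the Simpson filtration $E_1\subset\cdots\subset E_n=E$ has line bundle graded pieces $F_j\cong L_j$. The bundle conditions say that each $E_j/E_{j-2}\cong L_{j-1}\oplus L_j$ splits. The field conditions say that the diagonal entry $L_j\to L_jK$ induced by $\Phi$ on $E_{j+1}/E_{j-1}$ vanishes. By \eqref{zv_slice} this means that, in a smooth splitting adapted to the filtration, the weight $0$ part $\phi_0$ of the Higgs field (the diagonal) and the weight $1$ part $\beta_1$ of $\dolb$ (the first superdiagonal) both vanish.

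\textbf{Step 1: the Simpson filtration of $E'$.} I take the filtration $E'_j\coloneqq E_j\cap E'$, saturated. Near $c$ I work in a local holomorphic frame adapted to the filtration. Since $\Phi_1\cdots\Phi_{n-1}(c)\neq0$, $\Phi_c$ is regular and has a cyclic vector. Hence its $(n-i)$-dimensional invariant subspace $V$ is transverse to $(E_i)_c$, i.e. it projects isomorphically onto $E_c/(E_i)_c$; this follows by reading off the lower-triangular form of $\Phi_c$ with nonzero subdiagonal. From this the graded pieces become $L_j(-c)$ for $j\le i$ and $L_j$ for $j>i$, which is exactly the chain $\mathcal E'$. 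Proposition \ref{HH_hecke} guarantees this is the Simpson filtration. The bundle conditions \eqref{zv_seq} for $E'$ should then follow because the Hecke modification is local at $c$. Away from $c$ nothing changes. At $c$ the splittings of $E_j/E_{j-2}$ restrict to $E'_j/E'_{j-2}$, because $V$ is a graph over the last $n-i$ coordinates and so is compatible with the two-step quotients. The one case needing care is $j=i+1$, where $L_i(-c)$ and $L_{i+1}$ are glued; there I check that the extension class changes only by twisting and remains split.

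\textbf{Step 2: field conditions (main obstacle).} The diagonal entries of $\Phi'$ agree with those of $\Phi$ away from $c$, so they are zero there. Each diagonal entry is a holomorphic section of $K$ (the map $L_j'\to L_j'K$), so it vanishes identically unless the Hecke transformation introduces a pole at $c$, which it does not. The subtle point is that the new filtration near $c$ mixes $E_i$ with $V$, so the induced diagonal maps are computed in a modified frame. Off-diagonal entries of $\Phi$ can then contribute to the diagonal of $\Phi'$ at weight $0$ through a residue-type term of order $1/s_c$ times $s_c$. My first attempt will be to write $V$ explicitly as the span of $e_{i+1}+x_{i+1}e_1+\cdots$ in the local frame and compute the induced $\Phi'$ block. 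I expect the sole obstruction to be a single scalar multiple of the trace of $\Phi_c|_V$; equivalently, the new weight $0$ component of $\Phi'$ is proportional to $\tr\Phi'_c$ on the quotient piece, which is exactly $\tr\Phi_c|_V$. Since this trace is zero by hypothesis, all field conditions \eqref{zv_higgs} hold. By Proposition \ref{zv_simp}, $(E',\Phi')\in\mathcal L_{\mathcal E'}$.
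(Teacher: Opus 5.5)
\textbf{There is a genuine gap: the trace hypothesis is used in the wrong place.} Your overall frame matches the paper: you get the limit from Proposition \ref{HH_hecke} and then check the criterion of Proposition \ref{zv_simp} on $E'_j=E_j\cap E'$. But $\tr\Phi_c|_V=0$ is used in the wrong place, and the step where it is actually needed is asserted without proof.

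In Step~1 you claim the bundle conditions follow from locality and transversality of $V$ to $(E_i)_c$, and that at $j=i+1$ the extension ``changes only by twisting and remains split''. Transversality gives $\dim V\cap(E_k)_c=\max\{0,k-i\}$, which settles $k\le i$ and $k\ge i+2$. The case $k=i+1$ is different. The line $V\cap(E_{i+1})_c$ is spanned by some $v=\beta_1e_1+\cdots+\beta_ie_i+e_{i+1}$, where $e_k$ spans $(L_k)_c$ in the splitting \eqref{zv_slice}. Its image in $(L_i\oplus L_{i+1})_c$ is $(\beta_i,1)$. The inherited splitting of $E_{i+1}/E_{i-1}$ descends to $E'_{i+1}/E'_{i-1}\cong L_i(-c)\oplus L_{i+1}$ only if $\beta_i=0$. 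Otherwise the obstruction is the image of $\beta_i$ under the connecting map $(L_{i+1}^{-1}L_i)_c\to H^1(L_{i+1}^{-1}L_i(-c))$. Nothing in your Step~1 controls $\beta_i$.

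\textbf{The missing idea is the computation showing $\beta_i=0$.} Apply Cayley--Hamilton for $\Phi_c|_V$ to $v$; because the trace vanishes, there is no $\Phi_c^{n-i-1}v$ term. In this basis $\Phi_c$ has three properties:
\begin{itemize}
\item it raises the filtration index by at most one;
\item it has nonzero subdiagonal entries $\Phi_k(c)$;
\item it has zero diagonal, by the zero velocity form of $(E,\Phi)$, i.e.\ the vanishing weight-$0$ part of $\Phi$ in \eqref{zv_slice}.
\end{itemize}
Hence only $\Phi_c^{n-i}v$ has a nonzero $n$-th coordinate, namely $\beta_i\Phi_i(c)\cdots\Phi_{n-1}(c)$, which forces $\beta_i=0$. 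Without the hypothesis the same computation gives $\beta_i\Phi_i(c)=\tr\Phi_c|_V$, so this is exactly where the hypothesis enters. Note that it is the field condition for $(E,\Phi)$ that feeds the bundle condition for $(E',\Phi')$.

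Once the inherited splittings are valid for $E'$, the field conditions for $(E',\Phi')$ are automatic. The compositions \eqref{zv_higgs} for $(E',\Phi')$ are restrictions of those for $(E,\Phi)$ along injections, hence zero. So in the inherited splitting there is no residue-type trace term on the Higgs side.

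Your Step~2 is also internally inconsistent. You first note that the diagonal entries agree with those of $\Phi$ away from $c$, and hence vanish identically. You then predict a nonzero contribution proportional to the trace. The computation meant to produce that trace is only anticipated, never carried out.
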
 

\begin{proof}

	To check that the Hecke transform preserves the zero velocity condition on the bundle side, we need to check that it preserves sequence \eqref{zv_seq} of Proposition \ref{zv_simp}. This is equivalent to having, for each $k=2,\ldots,n$, commutative exact diagrams of sheaves of the form
	\[
	\begin{tikzcd}
		&0\arrow[d] & 0\arrow[d] & 0\arrow[d] \\ 
		0\arrow[r]& E'_{k-2} \arrow[r] \arrow[d] & E' \arrow[r] \arrow[d] & L'_{k-1} \oplus L'_k \arrow[d]\arrow[r] & 0\\ 
		0\arrow[r]&E_{k-2}\arrow[r] \arrow[d] & E_k \arrow[r,"p"] \arrow[d] & L_{k-1}\oplus L_k \arrow[d]\arrow[r] & 0\\ 
		0\arrow[r] & (E_{k-2})_c/V_{k-2} \arrow[r] \arrow[d] & (E_k)_c/V_k \arrow[r] \arrow[d] & (L_{k-1}\oplus L_k)_c/p_c(V_k) \arrow[d]\arrow[r] & 0 \\ 
		& 0 & 0 & 0, 
	\end{tikzcd}
	\]where $E_\bullet$ is the Simpson filtration of $(E,\Phi)$, $E'_\bullet$ is the Simpson filtration of the Hecke transform $\mathcal H_V(E,\Phi)$, $V_\bullet = V \cap (E_\bullet)_c$ and $\bigoplus L'_k$ is the associated graded of $E'_\bullet$. By Proposition \ref{HH_hecke} we have \[ L'_k = \begin{cases}
		L_k(-c) \text{ if } k \le i \\ 
		L_k \text{ if } k > i.
	\end{cases} \] 
	The only non-trivial check is the exactness of the third column near the point $c$. It is equivalent to having
	
	\begin{itemize}
		\item $p_c(V_k)=0$ if $k\le i$,
		\item $p_c(V_{i+1}) = (L_{i+1})_c$,
		\item $p_c(V_k) = (L_{i-1}\oplus L_i)_c$ if $k \ge i+2$. 
	\end{itemize}
	
	By \cite[Lemma 4.14]{HH}, we have 
	\begin{equation}\label{dim_V}
		\dim V_k = \max\{0,k-i\}. 
	\end{equation}
	
	If $k < i+1$, by \eqref{dim_V} we have $V_k = 0$, thus $p_c(V_k)$ is also zero. Again by \eqref{dim_V}, if $k \ge i+2$, then $\dim p_c(V_k)=2$, thus $p_c(V_k) = (L_{i-1}\oplus L_i)_c$. However, if $k=i+1$, \eqref{dim_V} only tells us that $V_{i+1}$ is $1$-dimensional and embeds in $(L_{i}\oplus L_{i+1})_c$ via $p_c$ and we need to see that the image of this embedding is precisely $(L_{i+1})_c$. 
	

	
	We are led to the following linear algebra problem. Consider the linear map $A \coloneqq \Phi_c : \C^n \to \C^n$, where we have chosen a basis $(e_i)_i$, where $(L_i)_c = \langle e_i \rangle$, so that $A$ is represented by the matrix
	
	\[\begin{pmatrix}
		0 & & & & & \\ 
		\Phi_1(c) & 0 & &  & * & \\ 
		& \Phi_2(c) & 0 & & & \\ 
		& & \ddots &  0 & & \\ 
		& 0 & & \Phi_{n-2}(c) & 0 & \\ 
		& & & & \Phi_{n-1}(c) & 0 \\ 
		
	\end{pmatrix},\] with $\Phi_i(c)\ne 0$. By \eqref{dim_V}, the intersection $V \cap \langle e_1,\ldots,e_{i+1}\rangle$ is 1-dimensional, spanned by a vector $v=(\beta_1,\ldots,\beta_i,\beta_{i+1},0,\ldots,0)$. Note that if $\beta_{i+1}$ were 0, then in fact $v \in V \cap \langle e_1,\ldots,e_{i}\rangle = 0$ by \eqref{dim_V}, a contradiction. Thus we can assume \[v = (\beta_1,\ldots,\beta_i,1,0,\ldots,0).\] The projection to the subspace $\langle e_i,e_{i+1}\rangle$ has coordinates $(\beta_i,1)$ and so our goal is to show that $\beta_i = 0$.
	
	Now, since $v \in V$ and $\tr A|_V=0$, by the Cayley-Hamilton theorem it must satisfy the equation
	\[A^{n-i}(v) + b_2 A^{n-i-2}(v) + \cdots + b_{n-i}v = 0,\] where $\lambda^{n-i} + b_2\lambda^{n-i-2} + \cdots + b_n$ is the characteristic polynomial of $A|_{V}$. We will look at the last coordinate of the equation above. Since $A^{n-i-j}(v) \in \langle e_1,\ldots,e_{n-(j-1)} \rangle$ for $j \ge 2$, it follows that only $A^{n-i}(v)$ has a non-zero last coordinate. To compute this entry, we first note that, for $j=1,\ldots,n-i$, the $(i+j)$-th coordinate of $A^j(v)$ equals $\beta_i \Phi_i(c) \cdots \Phi_{i+j-1}(c)$ (easily seen by induction), thus for $j=n-i$, this means that the $n$th coordinate of $A^{n-i}(v)$ is $\beta_i \Phi_i(c) \cdots \Phi_{n-i}(c)$. Since $\Phi_i(c) \cdots \Phi_{n-i}(c) \ne 0$, we have $\beta_i = 0$.
	
	Thus, we have seen that performing a Hecke transformation at an invariant subspace $V$ where $\Phi_c$ is traceless maintains the zero velocity condition on the bundle side. To check that it is preserved on the Higgs field side as well, we look again at Proposition \ref{zv_simp}. We need to check that after performing a Hecke transformation, the composition in \eqref{zv_higgs} remains zero. 
	
	
	 We have the following diagram relating \eqref{zv_higgs} for the original Higgs bundle and its Hecke transform
	\begin{equation*}
		\begin{tikzcd}
			L_{k} \arrow[r] & E_k / E_{k-2} \arrow[r,"\Phi"] & (E_{k+1}/E_{k-1})K  \arrow[r] & L_kK \\ 
			L'_{k} \arrow[r] \arrow[u,hook] & E'_k / E'_{k-2} \arrow[r,"\Phi'"] \arrow[u,hook] & (E'_{k+1}/E'_{k-1})K  \arrow[r] \arrow[u,hook] & L'_kK \arrow[u,hook]
		\end{tikzcd}
	\end{equation*} where the vertical maps are injections. By assumption the top sequence is the zero map, which implies that the bottom one is zero as well.
\end{proof}

\subsection{The rank $2$ case} \label{rk_2}

We start by describing the Lagrangian in rank $2$, where we are able to carry out some infinitesimal computations.

In the moduli of $\SL(2,\C)$-Higgs bundles, the only non-trivial fixed points of the $\C^*$-action are of type $(1,1)$, i.e., of the form 
\[\left( L \oplus L^{-1}, \begin{psmallmatrix}
	0 & 0 \\ 
	\gamma_0 & 0
\end{psmallmatrix} \right).\] Since $\gamma_0$ is a non-zero holomorphic section of $L^{-2}K$, it follows that $1 \le \deg L \le g-1$. 

Fixing the degree of $L$, the zero velocity Lagrangian will consist of points of the form
\[\left( M \oplus M^{-1}, \begin{psmallmatrix}
	0 & \beta \\ 
	\gamma & 0
\end{psmallmatrix} \right),\] with $\deg M = \deg L$, $\gamma \in H^0(M^{-2}K)$, $\beta \in H^0(M^2K)$.

We recognize these points as being $\SL(2,\R)$-Higgs bundles so that in fact, the Lagrangians we build this way are connected components of the moduli space $\mathcal M (\SL(2,\R))$ sitting inside $\mathcal M (\SL(2,\C))$, indexed by the degree of $L$ (see \cite{hitchinSDERS}). Since $\SL(2,\R)\cong\SU(1,1)$, this is in agreement with Remark \ref{SU_pq}.

\begin{rmk}\label{hitchin_2}
	If $\deg L = g-1$, then $\deg \gamma = 0$, so we obtain $2^{2g}$ Lagrangians $\mathcal L_\alpha$ which coincide with the Hitchin sections, as in Example \ref{hitchin_sec}. 
\end{rmk}


As a first step towards understanding the interaction of $\mathcal L_\alpha$ with the Hitchin map in \eqref{hitchin_map}, we compute its derivative.

\begin{prop}\label{rk2_der}
	Let $(E,\Phi) \in \mathcal L_\alpha$ be a point with smooth spectral curve. Then the derivative of the restriction $h|_{\mathcal L_\alpha}$ at the point $(E,\Phi)$ is an isomorphism.
\end{prop}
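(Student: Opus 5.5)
In rank $2$ the Hitchin base is $\mathcal A = H^0(K^2)$, of dimension $3g-3 = \dim \mathcal L_\alpha$ by Theorem \ref{thm:A}. It therefore suffices to show that $d(h|_{\mathcal L_\alpha})$ is injective at a point $(E,\Phi)$ with smooth spectral curve.

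\textbf{Step 1: first-order deformations.} By the description above, such a point is
\[\Bigl(M\oplus M^{-1},\begin{psmallmatrix}0&\beta\\ \gamma&0\end{psmallmatrix}\Bigr),\]
and the Hitchin map (up to sign) is $q=\beta\gamma \in H^0(K^2)$. Smoothness of the spectral curve $\lambda^2=\beta\gamma$ forces $q$ to have only simple zeros. Hence $\beta$ and $\gamma$ have no common zeros, each has simple zeros, and both are nonzero. In particular the point is stable.

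The tangent space to $\mathcal L_\alpha$, viewed as the component of $\mathcal M(\SL(2,\R))$, is the $\mathbb H^1$ of the $\SL(2,\R)$ deformation complex
\[\mathcal O \xrightarrow{\ \psi\mapsto(-2\psi\gamma,\ 2\psi\beta)\ } M^{-2}K\oplus M^{2}K.\]
A first-order deformation is therefore given by \v{C}ech data consisting of $\dot M \in H^1(\mathcal O)$ together with local variations $\dot\gamma,\dot\beta$. These satisfy the cocycle relations $\dot\gamma_{ij}-\dot\gamma_{ji}$-type compatibility, namely $\delta\dot\gamma = -2\dot M\gamma$ and $\delta\dot\beta = 2\dot M\beta$, and are taken modulo the coboundaries coming from $H^0$ of the first term.

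The differential of $h$ sends this class to $\dot q = \dot\beta\gamma+\beta\dot\gamma$. This is a global section: the cocycle contributions $2\dot M\beta\gamma - 2\dot M\beta\gamma$ cancel.

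\textbf{Step 2: injectivity.} Suppose $\dot q=0$. Then locally $\dot\beta\gamma = -\beta\dot\gamma$. Since $\beta$ and $\gamma$ have no common zeros, $\beta$ divides $\dot\beta$ and $\gamma$ divides $\dot\gamma$. So there exist local functions $f$ with
\[\dot\beta = f\beta,\qquad \dot\gamma = -f\gamma.\]
The cocycle conditions then read $\delta f = 2\dot M$, which says $\dot M$ is a coboundary in $H^1(\mathcal O)$. After a gauge change, which is exactly the image of $\psi$, we may take $\dot M=0$ and $f$ global, hence constant. But a constant $f$ corresponds to $(\dot\beta,\dot\gamma) = (f\beta,-f\gamma)$, which is the image of the infinitesimal gauge transformation $\psi = f/2$. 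So the class vanishes.

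This proves injectivity, and the dimension count then gives an isomorphism.

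\textbf{Expected obstacle.} The main difficulty is the bookkeeping of the \v{C}ech/hypercohomology description and of the $\C^*$ gauge, in particular confirming that the tangent space to $\mathcal L_\alpha$ is this $\SL(2,\R)$ complex.

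An alternative route uses the $\SL(2,\R)$ component description and counts fibers directly. The fiber of $h|_{\mathcal L_\alpha}$ over $q$ is finite, corresponding to splittings $\div q = \div\beta + \div\gamma$ of fixed degrees together with square-root choices. One would then check that this finite fiber is reduced, which reduces to the same divisibility argument.
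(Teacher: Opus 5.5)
Your proof is correct, but it reaches injectivity by a different route from the paper. The paper identifies $T_{(E,\Phi)}\mathcal L_\alpha$ with $\mathbb H^1$ of the same subcomplex $\mathcal O\to M^2K\oplus M^{-2}K$. It then imports Markman's description of $\ker d_{(E,\Phi)}h$ at a point of a smooth Hitchin fiber, namely the classes $[(\dot A,0)]$ with $[\Phi,\dot A]=0$. A diagonal $\dot A=\mathrm{diag}(\dot\sigma,-\dot\sigma)$ commutes with $\Phi$ only if $\beta\dot\sigma=\gamma\dot\sigma=0$, so the intersection is zero, and the same dimension count finishes. You instead compute $d(h|_{\mathcal L_\alpha})$ directly on \v{C}ech cocycles as $\dot\beta\gamma+\beta\dot\gamma$. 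You then use the coprimality of $\beta$ and $\gamma$ to write $(\dot\beta,\dot\gamma)=(f\beta,-f\gamma)$ locally and exhibit the cocycle as a coboundary.

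The paper's route is shorter once Markman's result is granted, and it is conceptual: the fiber directions are off-diagonal deformations of the holomorphic structure, while $\mathcal L_\alpha$ only deforms diagonally. Your route has three advantages. It is self-contained. It works at the level of cohomology classes, whereas the paper's intersection is computed on representatives; that step tacitly uses that the full deformation complex splits into a diagonal-to-off-diagonal piece and an off-diagonal-to-diagonal piece, so the two subspaces of $\mathbb H^1$ are complementary. And it proves slightly more: the injectivity step only needs $\beta,\gamma\neq 0$ with disjoint zero divisors, which is weaker than $\beta\gamma$ having simple zeros. As a byproduct it also shows that $\mathbb H^1(C^\bullet_{\mathcal L}(E,\Phi))\to T_{(E,\Phi)}\mathcal M$ is injective.

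Two small cleanups. First, $\dot M$ should be a \v{C}ech $1$-cocycle rather than a class in $H^1(\mathcal O)$. Second, your two-step gauge argument collapses to one line: with $\delta f=2\dot M$, the whole cocycle $(\dot M,\dot\gamma,\dot\beta)$ is exactly the coboundary of the $0$-cochain $(f_i/2)$.
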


\begin{proof}
	We first need an appropriate description of the tangent space at a point of $\mathcal L_\alpha$. We know that such a point has the form \[(E,\Phi)=\left(L\oplus L^{-1},\begin{psmallmatrix}
		0 & \beta \\ 
		\gamma & 0
	\end{psmallmatrix}\right),\] with limit to $0$ equal to
	
	\[\mathcal E=\left(L\oplus L^{-1},\begin{psmallmatrix}
		0 & 0 \\ 
		\gamma & 0
	\end{psmallmatrix}\right).\] Note that although $(E,\Phi)$ may not be a fixed point, the bundle $E$ remains a direct sum so that the weight decomposition of $\End E$ given by \eqref{end_decomp} still holds.

From the deformation complex $C^\bullet(E,\Phi)$ in \eqref{def_comp} we notice that, unlike what happens at a fixed point (compare with \eqref{comp_decomp}), the map $[\Phi,-]$ sends $\End_0 E \cong \mathcal O$ into $\End_{-1}E \oplus \End_1 E \cong L^2K \oplus L^{-2}K$ so we have a subcomplex

\[
\begin{array}{rccc}
	C^\bullet_{\mathcal{L}}(E,\Phi): & \mathcal{O} & \longrightarrow & L^2 K \oplus L^{-2}K \\
	&\dot{\sigma} & \longmapsto & (-2\beta\dot{\sigma}, 2\gamma\dot{\sigma}).
\end{array}
\] Comparing with \eqref{slice_tangent}, we see that $T_{(E,\Phi)}\mathcal L_\alpha \cong \mathbb H^1(C^\bullet_{\mathcal{L}}(E,\Phi))$.

Now, it is known (see e.g. \cite[Proposition 8.2]{markmanSCIS}) that, at a point $(E,\Phi)$ of a smooth fiber of the Hitchin map $h$ (i.e., a point with smooth spectral curve), the kernel of the derivative $d_{(E,\Phi)}h$ is given by \[\ker d_{(E,\Phi)}h = \{[(\dot A,0)] \mid \dot A \in \Omega^{0,1}(\End E), [\Phi,\dot A]=0\}.\]
Thus, it follows that
\begin{align*}
 \ker d_{(E,\Phi)}(h|_\mathcal L) &= \ker d_{(E,\Phi)}h \cap \mathbb{H}^1(C^\bullet_{\mathcal L}(E,\Phi)) \\ 
&=\left\{\left[\begin{psmallmatrix}
\dot \sigma & 0 \\ 
0 & -\dot \sigma 
\end{psmallmatrix},0 \right] \,\middle| \, 0 = 2 \beta \dot\sigma, \, 0 = -2\gamma \dot \sigma\right\} \\
&=0.
\end{align*}

We conclude that, at a point $(E,\Phi) \in \mathcal L_\alpha$ with smooth spectral curve, the derivative $d_{(E,\Phi)}h|_\mathcal L$ is an isomorphism, since both $\mathcal L_\alpha$ and $\mathcal A$ have half the dimension of $\mathcal M$.
\end{proof}

Due to the properness of the Hitchin map, the surjectivity of the derivative allows us to show surjectivity of the map itself restricted to $\mathcal L_\alpha$. We note that in \cite[Proposition 22]{schap_sl2R} it was already shown that the intersection of a connected component of $\mathcal M(\SL(2,\R))$ (i.e., $\mathcal L_\alpha$) with any smooth fiber of $h$ is non-empty.

\begin{cor}
	The Hitchin map $h:\mathcal M \to \mathcal A = H^0(K^2)$ is surjective when restricted to the Lagrangian $\mathcal L_{\alpha}$.
\end{cor}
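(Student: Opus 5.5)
The plan is to show that the image $h(\mathcal L_\alpha)\subset \mathcal A=H^0(K^2)$ is both open and closed in $\mathcal A$. It is certainly nonempty, since $0 = h(\mathcal E)$ for any fixed point $\mathcal E\in F_\alpha\subset\mathcal L_\alpha$. Since $\mathcal A$ is connected, this will force $h(\mathcal L_\alpha)=\mathcal A$. The main inputs are Proposition \ref{rk2_der}, properness of $h$, and the $\C^*$-equivariance of $h$ together with the $\C^*$-invariance of $\mathcal L_\alpha$.

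\textbf{Step 1: openness over the smooth locus.} Let $\mathcal A^{\mathrm{reg}}\subset\mathcal A$ be the locus of quadratic differentials with simple zeros, i.e.\ with smooth spectral curve. Its complement is a proper closed subvariety, so $\mathcal A^{\mathrm{reg}}$ is connected, open and dense. By Proposition \ref{rk2_der}, $h|_{\mathcal L_\alpha}$ is a local biholomorphism at every point of $\mathcal L_\alpha\cap h^{-1}(\mathcal A^{\mathrm{reg}})$. Hence $h(\mathcal L_\alpha)\cap\mathcal A^{\mathrm{reg}}$ is open.

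\textbf{Step 2: closedness within $\mathcal A^{\mathrm{reg}}$.} Take $a_k\to a\in\mathcal A^{\mathrm{reg}}$ with $a_k=h(x_k)$ and $x_k\in\mathcal L_\alpha$. Properness of $h$ gives a subsequence $x_k\to x\in\mathcal M$ with $h(x)=a$. The delicate point is that $\mathcal L_\alpha$ is not closed in $\mathcal M$ (Remark \ref{not_closed}), so we must check $x\in\mathcal L_\alpha$. In rank $2$ this uses the explicit description of $\mathcal L_\alpha$: its points are $\bigl(M\oplus M^{-1},\begin{psmallmatrix}0&\beta\\ \gamma&0\end{psmallmatrix}\bigr)$ with $\deg M=\deg L$ fixed, i.e.\ a connected component of $\mathcal M(\SL(2,\R))$.

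The locus of such $\SL(2,\R)$-Higgs bundles (with $\deg M$ fixed) is closed in $\mathcal M$. It is the fixed locus of an involution, and the degree is a topological invariant (the Toledo invariant), hence locally constant on that locus. Over $\mathcal A^{\mathrm{reg}}$ the Higgs bundles are stable, so no jump to a different splitting can occur. Therefore the limit $x$ is again of this form with the same degree, and $\gamma\neq0$ because $\beta\gamma=-\det\Phi=a\ne 0$. It follows that $x\in\mathcal L_\alpha$. I expect this closedness step to be the main obstacle: one must rule out limits flowing to a different fixed component, and here it is handled by the $\SL(2,\R)$ identification.

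\textbf{Step 3: conclusion.} Steps 1 and 2 give that $h(\mathcal L_\alpha)\cap\mathcal A^{\mathrm{reg}}$ is open and closed in the connected set $\mathcal A^{\mathrm{reg}}$. It is nonempty: by $\C^*$-equivariance ($h(t\cdot x)=t^2h(x)$) together with Step 1 at some smooth point, or directly by \cite[Proposition 22]{schap_sl2R}. Hence it equals $\mathcal A^{\mathrm{reg}}$.

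Finally, $h(\mathcal L_\alpha)$ contains the dense set $\mathcal A^{\mathrm{reg}}$. For any $a$, pick $a_k\in\mathcal A^{\mathrm{reg}}$ with $a_k\to a$ and lift them to $x_k\in\mathcal L_\alpha$. Properness gives a limit $x$ in the closure $\overline{\mathcal L_\alpha}$ with $h(x)=a$. This shows $h(\overline{\mathcal L_\alpha})=\mathcal A$.

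To obtain the statement for $\mathcal L_\alpha$ itself, I would argue directly over singular $a$. Choose a square root of the divisor data: write $a=-\beta\gamma$ by splitting $\div a$ into effective divisors $D_1+D_2$ with $\deg D_2 = 2g-2-2\deg L$. This requires $\deg D_2\le 4g-4$, which holds, and $\deg D_2>0$ except in the Hitchin-section case. Then set $M^{-2}K=\mathcal O(D_2)$ and take $\gamma,\beta$ to be the sections defined by $D_2$ and $D_1$, rescaling to match $a$. When $a=0$, the fixed points already lie in $\mathcal L_\alpha$. This explicit construction covers all $a$, and could even replace Steps 1–2 entirely.
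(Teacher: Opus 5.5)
Your proposal is correct. Its decisive part, the explicit construction in your last paragraph, takes a genuinely different route from the paper's.

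\textbf{The paper's route.} It argues globally. In rank $2$ all of $\mathcal L_\alpha$ is a connected component of $\mathcal M(\SL(2,\R))$, hence closed in $\mathcal M$. By properness, $h(\mathcal L_\alpha)$ is then closed. Proposition \ref{rk2_der} shows it contains an open set, and irreducibility of $\mathcal A$ forces $h(\mathcal L_\alpha)=\mathcal A$.

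\textbf{Your Steps 1--3.} These use the same two inputs, but you only prove closedness over $\mathcal A^{\mathrm{reg}}$. That is why your density argument gives only $h(\overline{\mathcal L_\alpha})=\mathcal A$. If you used instead that every point of the degree-$d$ component has $\gamma\neq 0$ and is stable, then $\mathcal L_\alpha$ would be closed in all of $\mathcal M$, and you would be done at that point, exactly as in the paper. Separately, nonemptiness ``by $\C^*$-equivariance together with Step 1'' is circular: Step 1 already needs a point of $\mathcal L_\alpha$ over $\mathcal A^{\mathrm{reg}}$. Your construction or \cite{schap_sl2R} supplies one.

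\textbf{Your explicit construction.} You take $D_2\le\div a$ of degree $2g-2-2d$, $M^2\cong K(-D_2)$, $\gamma=s_{D_2}$, and $\beta=-a/\gamma\in H^0(K^2(-D_2))=H^0(M^2K)$. This needs neither Proposition \ref{rk2_der}, nor properness, nor closedness, and it works uniformly for singular $a$. It is precisely the third step in the proof of Proposition \ref{thm:rk2_count}. It is also closer in spirit to how the paper proves existence in higher rank (Theorem \ref{thm:B}), where $\mathcal L_\alpha$ is no longer closed (Remark \ref{not_closed}).

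\textbf{What to add.} You should say why the constructed pair lies in $\mathcal L_\alpha$. Since $d\ge 1$ and $\gamma\neq 0$, the filtration $M\subset E$ has stable graded object. So the $\C^*$-limit is the fixed point $\bigl(M\oplus M^{-1},\begin{psmallmatrix}0&0\\ \gamma&0\end{psmallmatrix}\bigr)\in F_\alpha$, reached with zero velocity. The pair is itself stable. Any $\Phi$-invariant line subbundle is an eigenline, namely the saturation of $(\pm\omega,\gamma)\colon MK^{-1}\to E$ for some $\omega\in H^0(K)$ with $\omega^2=\beta\gamma$. Its degree is at most $d-(2g-2)+\deg\gamma=-d<0$.
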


\begin{proof} 

	In this case, the Lagrangian $\mathcal L_\alpha$ is closed given that it was identified with a connected component of $\mathcal M(\SL(2,\R))$. Since the Hitchin map $h$ is proper, the image is also closed in the base $\mathcal A$. Moreover, it contains an open set of dimension equal to that of $\mathcal A$ by Proposition \ref{rk2_der} above, hence the image must be the whole of $\mathcal A$ (since $\mathcal A$, being a vector space, is irreducible).
\end{proof}

Since $h:\mathcal L_\alpha \to \mathcal A$ is surjective and generically a submersion, it follows that the intersection of $\mathcal L_\alpha$ with a generic fiber of $h$ is finite. Going through the proof of \cite[Proposition 22]{schap_sl2R}, we can actually find the number of intersection points.

\begin{prop} \label{thm:rk2_count}
	Let $a\in H^0(K^2)$ be such that $\div a$ is reduced. Let $F_\alpha$ be the component of fixed points of the form $\left(L\oplus L^{-1},\begin{psmallmatrix}
		0 & 0 \\ 
		\gamma & 0
	\end{psmallmatrix}\right)$ with $\deg L = d$. Then we have 
	\[|h^{-1}(a)\cap \mathcal L_\alpha| = 2^{2g} \binom{4g-4}{2g-2-2d}. \]
\end{prop}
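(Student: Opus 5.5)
Points of $\mathcal L_\alpha$ in rank $2$ have the form $\left(M\oplus M^{-1},\begin{psmallmatrix}0&\beta\\ \gamma&0\end{psmallmatrix}\right)$ with $\deg M=d$, $\gamma\in H^0(M^{-2}K)$ and $\beta\in H^0(M^2K)$. For such a point $\det\Phi=-\beta\gamma$, so $h(E,\Phi)=a$ means (up to the normalisation of the invariant polynomial) $\beta\gamma=-a$. I will therefore count such tuples up to isomorphism. Because $a\neq 0$, both $\beta$ and $\gamma$ are nonzero. This gives a decomposition
\[\div a=\div\beta+\div\gamma\]
into effective divisors of degrees $2d+2g-2$ and $2g-2-2d$. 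Since $\div a$ is reduced of degree $4g-4$, it is a set of $4g-4$ distinct points. Choosing $\div\gamma$ is then the same as choosing a subset $D\subset\div a$ of cardinality $2g-2-2d$, and there are $\binom{4g-4}{2g-2-2d}$ such subsets.

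Fix such a $D$. The constraint $M^{-2}K\cong\mathcal O(D)$ means $M^2\cong K(-D)$, which has $2^{2g}$ solutions $M$, forming a torsor for $\Jac(X)[2]$. For each choice of $M$, take $\gamma$ to be a section with divisor $D$. The section $\beta$ is then forced to equal $-a/\gamma$, which is holomorphic because $\div\beta=\div a-D\geq 0$, and $M^2K\cong K^2(-D)$ holds automatically. Each choice $(D,M)$ thus produces a point of $h^{-1}(a)\cap\mathcal L_\alpha$. These points are stable, since their spectral curve $\lambda^2+a=0$ is smooth when $a$ has simple zeros.

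The main step is to show that distinct $(D,M)$ give non-isomorphic Higgs bundles, and that rescaling $\gamma$ produces nothing new. For the rescaling, the diagonal gauge transformation $\operatorname{diag}(\mu,\mu^{-1})$ sends $(\beta,\gamma)$ to $(\mu^2\beta,\mu^{-2}\gamma)$. Hence changing $\gamma\mapsto c\gamma$ (with $\beta\mapsto c^{-1}\beta$) gives an isomorphic Higgs bundle, and the data really is just $(D,M)$.

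For the injectivity I intend to recover $(D,M)$ intrinsically from $(E,\Phi)$. The case $d=0$ needs separate care, since then $\deg M=0$ and $M\oplus M^{-1}$ has no canonical destabilising subbundle. For $d\geq 1$, $M$ is the unique line subbundle of maximal degree $d>0$ in $E$, because any other line subbundle maps nontrivially to $M^{-1}$ and so has degree at most $-d$. Then $D=\div(\Phi\colon M\to M^{-1}K)$, so $(D,M)$ is determined. An alternative, valid for every $d$ including $d=0$, is to use the BNR correspondence. On the spectral curve $X_a$, the involution $\lambda\mapsto-\lambda$ lifts to $E$, and $M$ and $M^{-1}$ are its eigenbundles; this recovers $\{M,M^{-1}\}$ together with the decomposition. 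When $M\not\cong M^{-1}$, the factor $M$ is singled out as the one carrying the $\gamma$-component. When $d=0$ and $M\cong M^{-1}$, swapping the roles of $\beta$ and $\gamma$ would identify the data $(D,M)$ with $(\div a-D,M)$, and I would have to check that this does not collapse the count. This is the likely obstacle, and the statement presumably intends $d\geq 1$ (the fixed-point component requires $1\leq\deg L\leq g-1$).

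Having set aside $d=0$, the count is $2^{2g}$ choices of $M$ times $\binom{4g-4}{2g-2-2d}$ choices of $D$, which gives the formula in the statement.
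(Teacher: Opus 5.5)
Your proof is correct and follows the same route as the paper. You split the reduced divisor $\div a$ as $\div\beta+\div\gamma$, count $\binom{4g-4}{2g-2-2d}$ choices for $D=\div\gamma$ and $2^{2g}$ square roots $M$ of $K(-D)$, and note that $\beta=-a/\gamma$ is then forced.

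Your extra checks fill in details the paper leaves implicit: the rescaling of $\gamma$ absorbed by $\operatorname{diag}(\mu,\mu^{-1})$, recovering $(D,M)$ from the unique line subbundle of degree $d\geq 1$, and stability via smoothness of the spectral curve. Your worry about $d=0$ is moot, since stability of the fixed point already forces $1\le d\le g-1$.
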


\begin{proof}
	Following the proof of \cite[Proposition 22]{schap_sl2R}, consider a point in the Lagrangian of the form 
	\begin{equation}\label{rk2_lag}
			\left( L \oplus L^{-1}, \begin{psmallmatrix}
			0 & \beta \\ 
			\gamma & 0
		\end{psmallmatrix} \right).
	\end{equation} The Hitchin map in this case reduces to the determinant, which equals $-\beta \gamma$.

If we now fix $a \in H^0(K^2)$ with reduced divisor, it follows that for \eqref{rk2_lag} to be on $h^{-1}(a)$, it must necessarily satisfy 
\begin{equation}\label{divisor_2}
	\div \gamma \le \div a.
\end{equation}
 If $\deg L = d$, then $\deg \gamma = \deg L^{-2}K$ and so there are
\[\binom{\deg(K^2)}{\deg(L^{-2}K)}=\binom{4g-4}{2g-2-2d}\] choices for $\div \gamma$. After picking $\div \gamma$, there are $2^{2g}$ choices for $L$ and there are no further choices since $\beta = -a/\gamma$, thus we arrive at the number in the statement of the theorem.
\end{proof}

\begin{rmk}
	If $\deg L = g-1$, then $\deg \gamma = 0$ and we get $2^{2g}$ points of intersection with $h^{-1}(a)$. This coincides with Remark \ref{hitchin_2} since each Hitchin section intersects every fiber of the Hitchin map exactly once.
\end{rmk}

\subsection{$Z$-very stable and $Z$-wobbly fixed points} \label{z_nilpotents}

Given a fixed point component $F_\alpha \subset \mathcal M$, we have constructed a Lagrangian $\mathcal L_\alpha$ which fibers over it. For each fixed point $\mathcal E \in F_\alpha$, the fiber $\mathcal L_{\mathcal E}$ consists of all points that flow, as $t$ goes to $0$, to $\mathcal E$ with "zero velocity". One of the central themes of \cite{HH} is the classification of fixed points of the $\C^*$-action of type $(1,\ldots,1)$d into "very stable" or "wobbly", depending on if its upward flow contains any nilpotent elements other than the fixed point itself.

\begin{deff}
	A fixed point $\mathcal E$ of the $\C^*$-action is said to be \emph{very stable} if the intersection $W_{\mathcal E}^+ \cap h^{-1}(0)$ reduces to the fixed point $\mathcal E$ itself. It is said to be \emph{wobbly} otherwise.
\end{deff}

There, the very stable fixed points of type $(1,\ldots,1)$ are characterised via the following result (recall \eqref{divisors}).

\begin{theorem}{\cite[Theorem 4.16]{HH}} \label{HH_vs}
	Let $\mathcal E$ be a stable type $(1,\ldots,1)$ fixed point of the $\C^*$-action. Then $\mathcal E$ is very stable if and only if $\Phi_1 \Phi_2 \cdots \Phi_{n-1} \in H^0(L_1^*L_nK^{n-1})$ has no repeated zero, i.e., if and only if the effective divisor $\delta_1 + \cdots + \delta_{n-1}$ is reduced. 
\end{theorem}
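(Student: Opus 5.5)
The plan is to reduce both directions to the Hitchin section, using the Hecke transformations of Proposition \ref{HH_hecke}. Two facts about the Hitchin section (Example \ref{hitchin_sec}) will be used: for the fixed point $\mathcal E_L$, where all $\Phi_i$ are nowhere vanishing, the upward flow $W^+_{\mathcal E_L}$ is a section of $h$, so it meets $h^{-1}(0)$ only in $\mathcal E_L$; and its degree-shifted $\GL(n,\C)$ analogues behave the same way. The first step is to observe that any chain of type $(1,\ldots,1)$ with reduced $\delta_+$ arises from such a Hitchin-section chain by a sequence of Hecke transforms. These are performed at the distinct points $c\in\delta_+$: if $c$ lies in $\delta_i$, we transform at an $(n-i)$-dimensional invariant subspace. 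At each stage $c$ is not yet a zero of $\Phi_1\cdots\Phi_{n-1}$, so Proposition \ref{HH_hecke} applies, and it adds $c$ to $\delta_i$. To stay within $\SL(n,\C)$ I will work in $\GL(n,\C)$ and tensor by a suitable line bundle at the end. Tensoring commutes with the $\C^*$-action and with nilpotency, so this is harmless.

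For the direction ``$\delta_+$ reduced $\Rightarrow$ very stable'', let $(E,\Phi)\in W^+_{\mathcal E}\cap h^{-1}(0)$. At each $c\in\delta_+$ I will perform the inverse (dual) Hecke transform, undoing the step above. This produces a nilpotent Higgs bundle in the upward flow of the Hitchin-section fixed point, which must therefore be that fixed point itself. Running the Hecke transforms forward again recovers $(E,\Phi)$. The key point is that the forward data is forced. At the transformed Hitchin-section fixed point, the subdiagonal entries are nonvanishing at $c$, so $\Phi_c$ is a regular nilpotent. A regular nilpotent has a unique invariant subspace of each dimension, namely the flag it determines. Hence the forward Hecke transform is uniquely determined, and its output is the fixed point $\mathcal E$. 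Therefore $(E,\Phi)=\mathcal E$.

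For the converse, assume some point $c$ has multiplicity $\ge 2$ in $\delta_+$. Then either $c$ appears twice in a single $\delta_i$, or $c$ lies in both $\delta_i$ and $\delta_j$ with $i\neq j$. Write $\mathcal E$ as a Hecke transform at $c$ of a chain $\mathcal E''$ in which $c$ has multiplicity one less. Now $\Phi''_c$ is nilpotent but not regular, since some subdiagonal entry vanishes at $c$. It therefore has a positive-dimensional family of invariant subspaces of the required dimension. Transforming the fixed point $\mathcal E''$ at a non-standard member $V$ of this family gives a nilpotent Higgs bundle, because Hecke transforms preserve $h$. I will then check, via the Simpson filtration and the associated-graded computation of \cite[Lemma 4.14]{HH}, that it still lies in $W^+_{\mathcal E}$ but is not isomorphic to $\mathcal E$. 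This shows $\mathcal E$ is wobbly.

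I expect the converse to be the main obstacle. Proposition \ref{HH_hecke} assumes $\Phi_1\cdots\Phi_{n-1}(c)\neq0$, and that hypothesis fails here. So the limit of the transformed bundle has to be identified by hand: compute its Simpson filtration directly and verify that the graded object is exactly $\mathcal E$. The filtration should be induced from that of $\mathcal E''$ by intersecting with the Hecke transformed sheaf. I will try this first in the two model cases, a double point of a single $\delta_i$ and a point shared by $\delta_i$ and $\delta_{i+1}$, using explicit local matrices near $c$. I also need the resulting nilpotent to be a genuinely different point of $\mathcal M$. I will get this by showing that the isomorphism class of the Hecke transform varies with $V$, for instance through the position of the kernel of $\Phi$ at $c$.
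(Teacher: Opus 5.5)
Your direction ``$\delta_+$ reduced $\Rightarrow$ very stable'' is the argument the paper sketches after the statement and reuses in the proof of Theorem \ref{thm:C}. One makes the move that deletes a simple zero $c\in\delta_k$, inducts down to the Hitchin section, and climbs back up using that a regular nilpotent has a unique invariant subspace of each dimension. Make explicit what the ``inverse Hecke'' is for an arbitrary nilpotent $(E,\Phi)\in W^+_{\mathcal E}$, which is not given to you as a Hecke transform. It is the Hecke transform at $(E_k)_c$, where $E_\bullet$ is the Simpson filtration. This subspace is $\Phi_c$-invariant because $\Phi_k(c)=0$. The induced filtration $E_\bullet\cap E'$ shows that the result flows to the chain with $c$ removed from $\delta_k$. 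Finally, $E(-c)\subset E'\subset E$ exhibits $(E(-c),\Phi)$ as a Hecke transform of $(E',\Phi')$ at an $(n-k)$-dimensional invariant subspace, and this is where uniqueness enters.

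The gap is in the converse. It is not true that transforming $\mathcal E''$ at an arbitrary non-standard invariant subspace lands in $W^+_{\mathcal E}$ (up to the twist by $\mathcal O(c)$). Take $c\in\delta_i\cap\delta_j$ with $i<j$, and let $\mathcal E''$ be obtained by deleting $c$ from $\delta_i$. Then $\Phi''_c$ has chains $e_1\to\cdots\to e_j\to 0$ and $e_{j+1}\to\cdots\to e_n\to 0$, and $\mathcal H_{V_0}(\mathcal E'')=\mathcal E(-c)$ for $V_0=\langle e_{i+1},\ldots,e_n\rangle$. The coordinate subspace $\langle e_i,\ldots,e_j,e_{j+2},\ldots,e_n\rangle$ is invariant of the right dimension, but its Hecke transform is itself a $\C^*$-fixed Higgs bundle with a different degree vector.

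The missing idea is $\C^*$-equivariance, and it also removes what you call the main obstacle. Since $g_\lambda=\mathrm{diag}(1,\lambda^{-1},\ldots,\lambda^{1-n})$ conjugates $\lambda\Phi''$ to $\Phi''$, one has
\[\lambda\cdot\mathcal H_V(\mathcal E'')\cong\mathcal H_{g_\lambda V}(\mathcal E'').\]
So the limit is $\mathcal H_{\lim_{\lambda\to0} g_\lambda V}(\mathcal E'')$, and neither a Simpson-filtration computation nor the hypothesis of Proposition \ref{HH_hecke} is needed. The action of $g_\lambda$ scales the coefficient of $e_k$ in the image of $e_m$ of a graph $V_0\to\langle e_1,\ldots,e_i\rangle$ by $\lambda^{m-k}$, with $m>k$. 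Hence you must choose $V\neq V_0$ invariant inside that graph chart.

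The paper's one-parameter families do exactly this:
\begin{itemize}
\item for a shared zero, $V_t=\langle e_{i+1},\ldots,e_j\rangle\oplus\langle e_{j+1}+te_i,e_{j+2},\ldots,e_n\rangle$;
\item when $c$ is a multiple zero of $\Phi_i$ only, $V_t=\langle e_{i+1}+te_1,\ldots,e_{2i}+te_i,e_{2i+1},\ldots,e_n\rangle$ if $2i\le n$, or $V_t=\langle e_{i+1}+te_{2i-n+1},\ldots,e_n+te_i\rangle$ if $2i\ge n$.
\end{itemize}
These satisfy $g_\lambda V_t=V_{\lambda^m t}$ with $m>0$. So $t\mapsto\mathcal H_{V_t}(\mathcal E'')$ is the closure of a single $\C^*$-orbit, running from $\mathcal E(-c)$ at $t=0$ to a different fixed point $\mathcal H_{V_\infty}(\mathcal E'')$ at $t=\infty$. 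The subspace in your counterexample above is precisely this $V_\infty$ in the shared-zero case. This is what shows that the points with $t\neq 0$ are nilpotent, lie in $W^+_{\mathcal E(-c)}$, and are not isomorphic to $\mathcal E(-c)$, without needing an invariant built from the position of the kernel of $\Phi$ at $c$.
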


To give an idea of the proof, given a fixed point $\mathcal E =(E,\Phi)$ and a non reduced point $c$ of $\delta^+$, we can construct a $1$-parameter family of $\Phi_c$-invariant subspaces $V_t$ of $E_c$ whose corresponding Hecke transforms define a curve inside the nilpotent cone connecting two distinct fixed point components, thus showing that $\mathcal E$ is wobbly. On the other hand, if all points of $\delta^+$ are reduced, we can show that the only possible $\Phi_c$-invariant subspaces of $E_c$, with $c \in \delta^+$, will define Hecke transformations that send the fixed point to another fixed point.

Given a fixed point $\mathcal E$, we can define analogues of very stable and wobbly considering the intersection of the nilpotent cone with the "zero velocity upward flow" $\mathcal L_{\mathcal E}$.

\begin{deff}
	A fixed point $\mathcal E$ is said to be \emph{$Z$-very stable} if the only nilpotent element of $\mathcal L_{\mathcal E}$ is the fixed point $\mathcal E$ itself. Otherwise we call it \emph{$Z$-wobbly}.
\end{deff}

A natural question that arises is then to see what happens to the nilpotent elements of $W_{\mathcal E}^+$ when passing to the zero velocity locus $\mathcal L_{\mathcal E}$. In particular, can a fixed point be wobbly but still $Z$-very stable? In other words, can the nilpotent elements be all outside the zero velocity locus? It is not hard to see that in rank $2$, this is indeed the case. In fact, a fixed point with chain form (recall \eqref{chain})
\[
\begin{tikzcd}
	L_1 \arrow[r,"\Phi_1"] & L_2,
\end{tikzcd}
\] is very stable if and only if $\Phi_1$ has no double zeros by Theorem \ref{HH_vs}, but since in this case the Lagrangian $\mathcal L_\alpha$ is a connected component of the moduli of $\SL(2,\R)$-Higgs bundles, its only nilpotent elements are the fixed points (it is also easy to see directly since the Higgs fields are of the form $\begin{psmallmatrix}
	0 & \beta \\ 
	\gamma & 0
\end{psmallmatrix}$, with $\gamma \ne 0$). Thus, although there exist wobbly fixed points, there are no $Z$-wobbly fixed points in rank $2$, in other words, all wobbly fixed points become $Z$-very stable (and of course the very stable ones remain $Z$-very stable).

This next result characterizes the $Z$-very stable $\C^*$-fixed points of type $(1,\ldots,1)$ in any rank through their associated divisors $(\delta_1,\cdots,\delta_{n-1}$). The point is that although any fixed point with non-reduced divisor will be wobbly, if these multiplicities are properly arranged (in this case, "concentrated at the extremities"), then the nilpotent orbits will lie outside the zero velocity locus and the point will be $Z$-very stable. We essentially adapt the proof of Theorem \ref{HH_vs}. The main novelty is to determine which of the families of invariant subspaces $V_t$ will preserve the zero velocity condition when a Hecke transformation is performed.

\begin{theorem}
	\ThmC
	\label{thm:C}
\end{theorem}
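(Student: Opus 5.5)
We prove the two directions separately, adapting the proof of Theorem \ref{HH_vs} and using Proposition \ref{zv_simp} and Proposition \ref{zv_hecke} to keep track of the zero velocity condition.

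\emph{$Z$-wobbly direction.} Suppose some point $c$ has multiplicity at least $2$ in $\delta_+$ and does not lie only in an extremity. Then either (a) $c$ lies in two different $\delta_i,\delta_j$, or (b) $c$ has multiplicity $\ge 2$ in some $\delta_i$ with $2\le i\le n-2$. We may also need (c): multiple points in both $\delta_1$ and $\delta_{n-1}$. In each case we write the chain with a multiple zero and view $\mathcal E$ as the Hecke transform, in the sense of Proposition \ref{HH_hecke}, of a fixed point $\mathcal E''$ with fewer zeros at $c$. On $E''_c$ we build a one-parameter family $V_t$ of $\Phi''_c$-invariant subspaces; these produce nilpotent Higgs bundles in $W^+_{\mathcal E}$ joining $\mathcal E$ to a different fixed point, as in \cite{HH}.

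The new requirement is zero velocity. For this we choose the $V_t$ so that $\tr \Phi''_c|_{V_t}=0$, which is automatic because $\Phi''_c$ is nilpotent. By Proposition \ref{zv_hecke}, where applicable, the resulting points lie in $\mathcal L_{\mathcal E}$. Where Proposition \ref{zv_hecke} does not apply, because the transformation is at a point where $\Phi''$ vanishes, we instead write the family explicitly in the form \eqref{zv_slice}. We then check directly that it has the upper-triangular shape with zero first superdiagonal in $\dolb$, and zero diagonal in $\Phi$ beyond the $\Phi_i$.

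Concretely, the deformation should be placed in an entry $\Hom(L_i,L_k)$ with $i-k\ge 2$ for $\dolb$, or $\Phi$-weight $\ge 1$. A double zero of $\Phi_i$ with $2\le i\le n-2$, or zeros of $\Phi_i$ and $\Phi_j$ with $i\ne j$, supplies a nilpotent deformation of $\Phi$ in $\Hom(L_{i+1},L_{i-1})K$-type entries, that is weight $\ge 2$ in $\End$. This requires two steps of room on both sides of the chain.

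\emph{$Z$-very stable direction.} Suppose all multiple points lie in $\delta_1$ only (the case $\delta_{n-1}$ follows by duality $(E,\Phi)\mapsto(E^*,\Phi^t)$). Let $(E,\Phi)\in\mathcal L_{\mathcal E}$ be nilpotent. By Proposition \ref{zv_simp}, $E$ is filtered with $E_i/E_{i-2}\cong L_{i-1}\oplus L_i$, and $\Phi$ has no weight-$0$ component. We argue as in the proof of Theorem \ref{HH_vs}: nilpotency forces $\Phi$ to preserve a full flag $0\subset N_1\subset\cdots$ given by its kernel filtration, so the $N_k$ are line subbundles. We compare this flag with the Simpson filtration.

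The Higgs bundle is determined away from the support of $\delta_+$, so any deviation is a Hecke-type modification at points of $\delta_+$. At reduced points this is rigid, as in \cite{HH}. At a multiple point $c\in\delta_1$, the only available deformation involves $\Hom(L_2,L_1)$, which is weight $1$ in $\Phi$. Zero velocity kills the weight-$0$ part of $\Phi$, and the bundle conditions kill the weight-$1$ part of $\dolb$; together these remove the corresponding degrees of freedom. Hence $(E,\Phi)\cong\mathcal E$.

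\emph{Main obstacle.} The main obstacle is this last step: showing rigorously that zero velocity rules out every nilpotent deformation coming from multiple points of $\delta_1$. I would first try an induction on $n$. We restrict to the quotient $E/E_1$, which is again zero-velocity with chain $L_2\to\cdots\to L_n$ and reduced $\delta_+$, hence a fixed point by Theorem \ref{HH_vs} applied to $W^+$. It then remains to analyse the extension by $L_1$ using the vanishing of the weight-$0$ and weight-$1$ pieces.
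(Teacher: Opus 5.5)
\textbf{There is a genuine gap in the $Z$-very stable direction, and the repair you propose does not work.}

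Saying that at a multiple point of $\delta_1$ ``the only available deformation involves $\Hom(L_2,L_1)$'' is the theorem itself, not a step towards it. It is also misphrased. The $\Hom(L_2,L_1)K$-entry of $\Phi$ has weight $1$, so it is allowed in $\mathcal L_{\mathcal E}$ by \eqref{new_slice}. What zero velocity kills is the $\dolb$-extension of $L_2$ by $L_1$, together with weight-$0$ Higgs entries.

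Your induction on $n$ breaks at the first step. $E_1$ is not $\Phi$-invariant: $\Phi(E_1)\subset E_2K$, with nonzero graded piece $\Phi_1$. So $E/E_1$ carries no induced Higgs field. There is therefore no rank $n-1$ zero-velocity Higgs bundle with chain $L_2\to\cdots\to L_n$ to feed into Theorem~\ref{HH_vs}, which in any case requires a stable fixed point of $\mathcal M$.

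The missing idea is the paper's induction on $\deg\delta_+$ by Hecke transformations. This makes rigorous your heuristic that every nilpotent is a Hecke modification at points of $\delta_+$.
\begin{itemize}
\item If $\Phi_k(c)=0$, then $(E_k)_c$ is $\Phi_c$-invariant.
\item The Hecke transform of the nilpotent $(E,\Phi)\in\mathcal L_{\mathcal E}$ at $(E_k)_c$ is nilpotent and satisfies the $p_c(V_i)$ conditions coming from Proposition~\ref{zv_simp}. It therefore lies in $\mathcal L_{\mathcal E'}$ with $\delta'_+=\delta_+-c$, and by induction it equals $\mathcal E'$.
\item Hence $(E,\Phi)(-c)$ is the Hecke transform of $\mathcal E'$ at an $(n-k)$-dimensional $\Phi'_c$-invariant subspace $V'$, which must itself satisfy the zero-velocity conditions.
\end{itemize}
Everything then reduces to linear algebra on $E'_c$. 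For $c$ simple, $\Phi'_c$ is regular nilpotent and $V'$ is unique. For $c$ multiple and $k=1$, the conditions force $V'=\langle e_2,a_1e_1+e_3,\dots,a_{n-2}e_1+e_n\rangle$, and invariance forces all $a_j=0$; the case $k=n-1$ is symmetric.

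Because this argument is local at $c$, the hypothesis is pointwise. Multiple zeros of $\Phi_1$ at some points and of $\Phi_{n-1}$ at others still give a $Z$-very stable point. So your tentative case (c) is not a wobbly case, and your duality reduction (``all in $\delta_1$'' or ``all in $\delta_{n-1}$'') would not cover it.

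\textbf{In the $Z$-wobbly direction, your criterion is right but the tools you first invoke cannot deliver it.} The criterion is that the new extension must couple $L_a$ and $L_b$ with $|a-b|\ge 2$.
\begin{itemize}
\item The families $V_t$ live at a point where $\Phi''$ still vanishes. So the hypothesis $\Phi''_1\cdots\Phi''_{n-1}(c)\neq 0$ of Proposition~\ref{zv_hecke} always fails; if it held, $\Phi''_c$ would be regular and there would be no family at all.
\item $\tr\Phi''_c|_{V_t}=0$ is automatic for nilpotent $\Phi''_c$, so it cannot separate wobbly from $Z$-wobbly. For example, take $n=3$ and a double zero of $\Phi_1$. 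Then $V_t=\langle e_2+te_1,e_3\rangle$ is invariant and traceless, yet it couples $L_1$ with $L_2$ and leaves $\mathcal L_{\mathcal E}$.
\end{itemize}
You need explicit families checked against the $p_c(V_i)$ conditions, as the paper does:
\begin{itemize}
\item for a common zero of $\Phi_i,\Phi_j$ with $i<j$: $\langle e_{i+1},\dots,e_j\rangle\oplus\langle e_{j+1}+te_i,e_{j+2},\dots,e_n\rangle$, which couples $L_i$ with $L_{j+1}$;
\item for a multiple zero of $\Phi_i$ with $1<i\le n/2$: $\langle e_{i+1}+te_1,\dots,e_{2i}+te_i,e_{2i+1},\dots,e_n\rangle$, which couples $L_a$ with $L_{a+i}$; the case $i>n/2$ is analogous.
\end{itemize}
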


\begin{proof}

	We first show that if the divisor is as in the statement of the theorem, then the fixed point must be $Z$-very stable. We go by induction on $\deg \delta_+$. If it is zero, then $\mathcal L_{\mathcal E}=W_{\mathcal E}^+$ is the Hitchin section as noted in Example \ref{hitchin_sec}, hence the result is clear (the point $\mathcal E$ is in fact very stable). Otherwise, let $c \in \delta_+$, let $k$ be such that $\Phi_k(c)=0$ and consider a nilpotent $(E,\Phi) \in \mathcal L_{\mathcal E}$. The goal is to show that this nilpotent element must in fact be isomorphic to the fixed point itself. Letting $(E_i)_i$ be the Simpson filtration of $(E,\Phi)$, we perform a Hecke transformation at the $(\Phi_c)$-invariant subspace $(E_k)_c$, obtaining a nilpotent Higgs bundle $(E',\Phi')$ flowing to another fixed point $\mathcal E'$. If $\mathcal E$ has chain form
	\begin{equation*}
		\begin{tikzcd}
			L_1 \arrow[r,"\Phi_1"] & L_2 \arrow[r,"\Phi_2"] & \cdots \arrow[r,"\Phi_{n-1}"] & L_{n}, 
		\end{tikzcd}
	\end{equation*} then $\mathcal E' = (\bigoplus_i L'_i,\overline{\Phi'})$ will be of the form
	\begin{equation*}
		\begin{tikzcd}
			L_1 \arrow[r,"\Phi_1"] & \cdots \arrow[r,"\Phi_{k-1}"] & L_k \arrow[r,"\Phi_{k}/s_c"] & L_{k+1}(-c) \arrow[r,"\Phi_{k+1}"] & \cdots \arrow[r,"\Phi_{n-1}"] & L_n(-c).
		\end{tikzcd}
	\end{equation*} Since the divisor $\delta_+'$ associated to $\mathcal E'$ equals $\delta_+ - c$, by induction $\mathcal E'$ is a $Z$-very stable fixed point. If we check that $(E',\Phi')$ is in the zero velocity locus $\mathcal L_{\mathcal E'}$ then we have $(E',\Phi')\cong \mathcal E'$. As in the proof of Proposition \ref{zv_hecke}, we need to see that, for each $i$, we have commutative exact diagrams of the form
	
	\[
	\begin{tikzcd}
		&0\arrow[d] & 0\arrow[d] & 0\arrow[d] \\ 
		0\arrow[r]& E'_{i-2} \arrow[r] \arrow[d] & E_i' \arrow[r] \arrow[d] & L'_{i-1} \oplus L'_i \arrow[d]\arrow[r] & 0\\ 
		0\arrow[r]&E_{i-2}\arrow[r] \arrow[d] & E_i \arrow[r,"p"] \arrow[d] & L_{i-1}\oplus L_i \arrow[d]\arrow[r] & 0\\ 
		0\arrow[r] & (E_{i-2})_c/V_{i-2} \arrow[r] \arrow[d] & (E_i)_c/V_i \arrow[r] \arrow[d] & (L_{i-1}\oplus L_i)_c/p_c(V_i) \arrow[d]\arrow[r] & 0 \\ 
		& 0 & 0 & 0, 
	\end{tikzcd}
	\] with $V_i = (E_i)_c \cap (E_k)_c = (E_{\min(i,k)})_c$. This is equivalent to having
	
	\[	
	p_c(V_i) = \begin{cases}
		(L_{i-1}\oplus L_i)_c \text{ if } i \le k \\ 
		(L_k)_c \text{ if } i=k+1 \\ 
		0_c \text{ if } i \ge k+2
	\end{cases}
	\] which is easily checked to be true.
	
	Now if the order of $c$ in $\delta_+$ is 1, then it becomes 0 in $\delta_+'$, so $\Phi'_c$ is regular nilpotent and the argument runs as in the proof of Theorem \ref{HH_vs}---there is a unique $(n-k)$-dimensional $\Phi'_c$-invariant subspace $V'$ of $E'_c$, namely $V' = (L_{k+1}\oplus \cdots \oplus L_n)_c$, and the Hecke transform at this subspace of $(E',\Phi')\cong \mathcal E'$ return $(E(-c),\Phi)\cong \mathcal E(-c)$ which shows the result after twisting by $\mathcal O(c)$.
	
	If the order of $c$ in $\delta_+$ is strictly bigger then $1$ then by assumption, $k$ is either $1$ or $n-1$. We claim that, as in the proof of Theorem \ref{HH_vs}, although we are able to find curves of nilpotent elements in $W_{\mathcal E}^+$, showing that the fixed point $\mathcal E$ is wobbly, all of these are in fact outside of the zero velocity locus $\mathcal L_{\mathcal E}$ and so the fixed point remains $Z$-very stable.
	
	Assume first $k=1$, then the Higgs field $\Phi'_c$ is of the form
	\[		
	\begin{pmatrix}
		0 & & & & \\
		0 & 0 \\
		& 1 & 0 \\ 
		& & \ddots & \ddots \\ 
		& & & 1 & 0
	\end{pmatrix}
	\] and we seek $(n-1)$-dimensional invariant subspaces of $E'_c \cong (L'_1 \oplus \cdots L'_n)_c$. Letting $(e_i)$ be a basis vector of $(L'_i)_c$, we can represent any $n-1$ dimensional subspace $V'$ by a $(n-1)\times n$ matrix where the rows are the coordinates of a basis of $V$ with respect to the $e_i$. Performing a Hecke at such a $V'$ will give a new point $(E'',\Phi'')$ which will flow to the fixed point $\mathcal E''$ given in chain form by
	
	\begin{equation*}
		\begin{tikzcd}
			L'_1(-c) \arrow[r,"\Phi_1"] & L'_2 \arrow[r,"\Phi_{2}"] & \cdots \arrow[r,"\Phi_{n-1}"] & L'_{n}. 
		\end{tikzcd}
	\end{equation*}
	
	Now the Hecke transform at $V'$ will preserve the zero velocity condition if and only if
	
	\[	
	p_c(V_i) = \begin{cases}
		(L_2)_c \text{ if } i=2 \\ 
		(L_{i-1}\oplus L_i)_c \text{ if } i>2. \\
	\end{cases}
	\] Using these conditions together with Gauss elimination, we can represent such a $V'$ as a matrix of the form (where the rows are a basis for $V'$)
	
	\[	
	\begin{pmatrix}
		0 & 1 & 0\\ 
		a_1 & 0 & 1 & \ddots\\ 
		\vdots &  & \ddots & \ddots & 0\\ 
		a_{n-2} & & & 0 & 1
	\end{pmatrix}.
	\] In other words we can pick a basis of $V'$ of the form
	\[e_2, a_1e_1+e_3,\ldots,a_{n-2}e_1+e_n.\] Now imposing invariance of $V'$ by $\Phi'_c$ it is easily checked that in fact all $a_j$ must be equal to zero. Thus, as in the regular nilpotent case, there is a unique $(n-1)$-dimensional subspace whose Hecke transformation preserves the zero velocity condition.
	
	The situation if $k=n-1$ is similar. Now we seek a line $V'$ which can be represented as a single vector that will preserve the zero velocity condition if and only if it is of the form
	\[ \begin{pmatrix}
		a_1 & \cdots & a_{n-2} & 0 & 1
	\end{pmatrix}, \] i.e., it is generated by \[a_1e_1 + \cdots a_{n-2}e_{n-2}+e_n.\]
	
	The line needs to be invariant by $\Phi'_c$, given by the matrix  
	\[		
	\begin{pmatrix}
		0 & & & & \\
		1 & 0 \\
		& 1 & 0 \\ 
		& & \ddots & \ddots \\ 
		& & & 0 & 0
	\end{pmatrix}
	\] so again all $a_j$ must be equal to zero, which means the desired $V'$ is also unique. 
	
	For the reverse implication, we will take a divisor which is not of the form in the statement of the theorem and from it produce a Hecke curve of nilpotent Higgs bundles which lie in the zero velocity locus. A divisor can fail to be as in the statement in two different ways. Either there is a common zero between two different $\Phi_i$ or one of the non-extremal $\Phi_i$ has a multiple zero.
	
	We first take a fixed point $\mathcal E = (\bigoplus_i L_i,\Phi)$ with $i < j$ such that $\Phi_i(c)=\Phi_j(c)=0$, for some $c \in X$. As in the first part of the proof, we perform a Hecke transformation at $(L_1 \oplus \cdots \oplus L_i)_c$, deleting the zero at $\Phi_i$ and obtaining $\mathcal E' = (\bigoplus_i L'_i,\Phi')$. If $\Phi'_k(c)\ne 0$ for $k \ne j$, the resulting Higgs field $\Phi'_c$ can be described by the chains
	\[e_1 \rightarrow e_2 \rightarrow \cdots \rightarrow e_i \rightarrow \cdots \rightarrow e_j \rightarrow 0,\] 
	\[e_{j+1} \rightarrow e_{j+2} \rightarrow \cdots \rightarrow e_n \rightarrow 0,\] where as before $e_k$ is a basis vector of $(L'_k)_c$. Consider the 1-parameter family of $(n-i)$-dimensional subspaces of $\mathcal E'$ given by
	\[V_t = \langle e_{i+1},\ldots,e_j \rangle \oplus \langle e_{j+1}+te_i,\ldots,e_n \rangle.\] It is $\Phi'_c$-invariant, it satisfies \[V_0 = \langle e_{i+1},\ldots,e_n\rangle \ne V_\infty\] and the Hecke transform at $V_t$ of $\mathcal E$ will preserve the zero velocity condition (recall that it is trivially satisfied by any fixed point) since the non-trivial extension happens between $L_i$ and $L_{j+1}$ and $j+1-i > 1$. Notice as well that if there is some other $k \ne j$ such that $\Phi'_k(c)=0$, each $V_t$ is still $\Phi'_c$-invariant. Thus $\mathcal H_{V_t}(\mathcal E')$ is a Hecke curve of nilpotent Higgs bundles flowing to $\mathcal H_{V_0}(\mathcal E')=\mathcal E(-c)$ with velocity zero. So $\mathcal E(-c)$ (and thus $\mathcal E$) is $Z$-wobbly.
	
	Now to the second case, assume that for some $1<i<n-1$, $\Phi_i$ has a zero $c$ with multiplicity at least $2$. After performing the first Hecke ("deleting the zero") to obtain $\mathcal E'$, we can assume that for all $k \ne i$, we have $\Phi_k(c)\ne 0$ (otherwise we already know that the point is $Z$-wobbly), so that $\Phi'_c$ can be described again by two chains
	\[e_1 \rightarrow e_2 \rightarrow \cdots \rightarrow e_i \rightarrow 0\] 
	\[e_{i+1} \rightarrow e_{j+2} \rightarrow \cdots \rightarrow e_n \rightarrow 0.\]
	If $i \le n-i$, we can take the family of subspaces
	\[V_t = \langle e_{i+1}+te_1,\ldots,e_{2i}+te_i \rangle \oplus \langle e_{2i+1},\ldots,e_n \rangle\] and if $i \ge n-i$, we can take
	\[V_t = \langle e_{i+1}+te_{2i-n+1},\ldots,e_{n}+te_i \rangle.\]
	
	In both cases, we again get a family of subspaces of $\mathcal E'_c$ with the desired properties, thus producing a Hecke curve of zero velocity nilpotents whose limit is $\mathcal E(-c)$, so it is $Z$-wobbly.
\end{proof}

\section{Intersection with a generic fiber} \label{int_generic}

Analyzing the proof of Proposition \ref{thm:rk2_count}, we can identify three main steps:

\begin{enumerate}
	\item we first counted the number of possible fixed points $\mathcal E \in F_\alpha$ for which the intersection $\mathcal L_{\mathcal E} \cap h^{-1}(a)$ can be non-empty. This led us to the condition $\div \gamma \le \div a_2$ on the fixed point $\mathcal E$;
	\item given such a fixed point, we counted the number of possible intersections in $\mathcal L_{\mathcal E} \cap h^{-1}(a)$ and obtain only one since $\beta$ is determined by $a$ and $\gamma$;
	\item finally, we showed that for each choice made in the previous two points, there actually exists a corresponding intersection point---this was easy because we just set $\beta = -a_2/\gamma$.
\end{enumerate}

In this section, we will show a formula for the number of intersection points between a Lagrangian $\mathcal L_\alpha$ built on a component $F_\alpha$ of type $(1,\ldots,1)$ fixed points of the form \eqref{chain} and a generic fiber $h^{-1}(a)$, by generalizing each step in the following ways:
\begin{enumerate}
	\item given a point $a \in \mathcal A$, we will find divisors $D_{n,i}$ on the curve $X$ such that $\div \Phi_{i} \subset D_{n,i}$, which will enable us to determine the finite set of fixed points in $F_\alpha$ such that $\mathcal L_{\mathcal E} \cap h^{-1}(a)$ can be non-empty;
	\item given such a fixed point, we will use a result of \cite{HH} to count the number of possible intersection points by relating them to the eigenvalues of $\Phi$ at the zeros of each $\Phi_i$;
	\item given a choice of fixed point as in $1.$ and a choice of eigenvalues as in $2.$, we will produce an intersection point.
\end{enumerate}

Observing that, for any fixed point $\mathcal E$ and point $a$ in the Hitchin base, we have $\mathcal L_{\mathcal E} \cap h^{-1}(a) \subset W^+_{\mathcal E}  \cap h^{-1}(a)$, our main technical tool will be the result of \cite{HH} which counts the number of points in $W^+_{\mathcal E}  \cap h^{-1}(a)$. We reproduce this result below for the convenience of the reader (we have slightly changed its phrasing so it makes more sense outside the context of the paper).

\begin{prop}{\cite[Proposition 5.18]{HH}} \label{HH_count}
	Let $\mathcal E \in \mathcal M$ be a very stable type $(1,\ldots,1)$ $\C^*$-fixed point with chain form \eqref{chain}. Let $a \in \mathcal A$ be such that the spectral curve $X_a \subset |K|$ is smooth and $\delta_+$ (recall \eqref{delta_plus}) avoids the ramification divisor of $\pi_a:X_a \longrightarrow X$.
	
	\begin{enumerate}
		\item 	To each Higgs bundle $(E,\Phi) \in W_{\mathcal E}^+ \cap h^{-1}(a)$ corresponds a choice of reduced effective divisors $\Gamma_i$ supported on $(n-i)$ preimages in $X_a$ of $\pi_a$ over each zero of $\Phi_i$ in $X$. More precisely, if $c \in X$ is a zero of $\Phi_i$ then $\Phi_c$ leaves $(E_i)_c$ invariant and $\tilde c \in \pi_a^{-1}(c)$ is in the support of $\Gamma_i$ if and only if $\tilde c \in K_c$ is \textbf{not} an eigenvalue for the restriction $\Phi_c|_{(E_i)_c}: (E_i)_c \to (E_i)_cK_c$ (here $E_\bullet$ denotes the Simpson filtration).
		
		\item Conversely, to each choice of reduced divisor $\Gamma_i$ supported on an $(n-i)$ element subset of $\pi^{-1}_a(c)$ for all zeros $c$ of all $\Phi_i$, there is a unique $(E,\Phi) \in W_{\mathcal E}^+ \cap h^{-1}(a)$ such that the corresponding line bundle on $X_a$ (given by the BNR correspondence) has the form $U = \pi_a^*(E_1)(\sum_i \Gamma_i)$.
		
		\item The above two correspondences are inverse of each other. In particular, counting the number of possible divisors $\Gamma_i$, we obtain
		\[|W^+_{\mathcal E}  \cap h^{-1}(a)|=\prod_{i=1}^{n-1}\binom{n}{i}^{m_i},\] where $m_i = \deg(\Phi_i)$.
	\end{enumerate}

\end{prop}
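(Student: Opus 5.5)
The statement in question is \cite[Proposition 5.18]{HH}, reproduced for convenience, so the natural plan is to follow the argument of Hausel--Hitchin. I would organise it around BNR spectral data and iterated Hecke transformations, using Proposition \ref{HH_hecke} and Theorem \ref{HH_vs} from above.

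The first step is induction on $\deg\delta_+=\sum m_i$. In the base case $\delta_+=0$, $\mathcal E$ is one of the points $\mathcal E_L$ of Example \ref{hitchin_sec}. Then $W^+_{\mathcal E}$ is a Hitchin section and meets $h^{-1}(a)$ in exactly one point, whose BNR line bundle is $\pi_a^*(E_1)$ up to the standard normalisation; this matches the empty choice of divisors. For the inductive step, take $(E,\Phi)\in W^+_{\mathcal E}\cap h^{-1}(a)$ and a zero $c$ of $\Phi_i$. Since $c$ is a zero of $\Phi_i$, Griffiths transversality puts $\Phi_c((E_i)_c)$ inside $(E_i)_cK_c$, so $(E_i)_c$ is $\Phi_c$-invariant. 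Because $\delta_+$ avoids the ramification divisor, $\Phi_c$ has $n$ distinct eigenvalues, namely the points of $\pi_a^{-1}(c)$. Hence $(E_i)_c$ is the sum of $i$ eigenlines, and its complement in $\pi_a^{-1}(c)$ is an $(n-i)$-element set $\Gamma_{i,c}$.

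Next I perform the Hecke transformation at $(E_i)_c$ to remove the zero. As in the first part of the proof of Theorem \ref{thm:C}, this yields a point of $W^+_{\mathcal E'}\cap h^{-1}(a)$, where $\mathcal E'$ has divisor $\delta_+-c$. On spectral data, a Hecke transform at a sum of eigenlines twists the BNR line bundle by minus the complementary preimages, so $U'=U(-\Gamma_{i,c})$. Conversely, starting from $\mathcal E'$ and any $(n-i)$-subset $\Gamma$ of $\pi_a^{-1}(c)$, I take $V$ to be the span of the eigenlines indexed by $\Gamma$; it is $(n-i)$-dimensional and $\Phi'_c$-invariant. Proposition \ref{HH_hecke} (applied at $c$, which is not a zero of $\Phi'$) sends $\mathcal H_V$ into $W^+$ of the fixed point $\mathcal E(-c)$, and twisting by $\mathcal O(c)$ returns to $W^+_{\mathcal E}$. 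Composing the two operations gives the identity, which proves the two correspondences are mutually inverse. Induction then gives $U=\pi_a^*(E_1)(\sum_i\Gamma_i)$, and counting subsets gives $\prod_i\binom{n}{i}^{m_i}$.

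The main obstacle is the bookkeeping needed to make the Hecke steps at different points commute and stay inside the correct upward flow. This is where very stability enters: every $c\in\delta_+$ is reduced, so $\Phi'_c$ after removing the zero is regular nilpotent. By the uniqueness argument of Theorem \ref{HH_vs}, the invariant subspace then determines the limit fixed point uniquely. I would also need to check that the normalisation $\pi_a^*(E_1)$ is compatible with the $E_1(-c)$ shift, including the degree twist for the determinant.
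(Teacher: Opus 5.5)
Your proposal is correct and is essentially the argument of \cite{HH} that the paper itself points to in the proof of Theorem~\ref{thm:B}: induction on $\deg\delta_+$ starting from a Hitchin section, with Hecke transforms at sums of eigenlines deleting or reinserting a zero of $\Phi_i$ and twisting the BNR line bundle by $\mp\Gamma_{i,c}$. Two points to tighten: very stability is used precisely to ensure $c\notin\delta'_+$, so that Proposition~\ref{HH_hecke} applies in the reverse step (the Higgs field of a point of $h^{-1}(a)$ is never nilpotent at $c$, so your ``regular nilpotent'' remark only applies to the fixed point $\mathcal E'$); and the claim that the two operations are mutually inverse needs a further fact, which follows from $\dim\bigl(V\cap(E'_k)_c\bigr)=\max\{0,k-i\}$ as in \cite[Lemma 4.14]{HH}. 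That fact is that after the reverse Hecke transform at the span $V$ of the $\Gamma$-eigenlines and the twist by $\mathcal O(c)$, the fibre at $c$ of the $i$-th step of the new Simpson filtration is spanned by the eigenlines outside $\Gamma$.
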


It will be useful for us to be able to distinguish those fixed points which are limits (to $0$) of the intersection points of the Lagrangian $\mathcal L_\alpha$ and a fiber of the Hitchin map.

\begin{deff}\label{admissible}
	Let $F_\alpha$ be a $\C^*$-fixed point component and $a \in \mathcal A$. We say that a fixed point $\mathcal E \in \mathcal L_\alpha$ is \emph{$a$-admissible} if $\mathcal L_{\mathcal E} \cap h^{-1}(a) \ne \varnothing$. We denote the set of $a$-admissible fixed points of $F_\alpha$ by $F_\alpha(a)$. 
\end{deff}


We start by treating the cases of rank $3$ and $4$ separately since these turn out to be particularly simple to understand and already contain the main ideas of the general case. However we save part $3$ of the above program to the general case since there is nothing to be gained by restricting to lower rank.

\begin{rmk} 
	In rank $2$, recall that a point of the Lagrangian $\mathcal L_\alpha$ is of the form \[\left( L \oplus L^{-1}, \begin{psmallmatrix}
		0 & \beta \\ 
		\gamma & 0
	\end{psmallmatrix} \right).\] Thus, the ramification divisor of the corresponding spectral curve, defined by $\lambda^2 - \gamma\beta$, contains the divisor of $\gamma$. This excludes the rank 2 case from fitting in Proposition \ref{HH_count} since there it is assumed that the ramification divisor and the divisors of the $\Phi_i$ are disjoint. It is however interesting to note that, from Proposition \ref{thm:rk2_count}, given an admissible fixed point $\mathcal E$, there is a single intersection point in $h^{-1}(a) \cap \mathcal L_{\mathcal E}$ and this agrees with the fact that over each zero of $\gamma$ there is now only a single pre-image of the spectral curve. This suggests that it may be possible to extend Proposition \ref{HH_count} to the ramified locus. 
\end{rmk}

\begin{prop} \label{prop:eigenvalues}
	Let $F_\alpha$ be a $\C^*$-fixed point component of type $(1,1,1)$ with $m_i = \deg \Phi_i$ and let $a=(a_2,a_3) \in \mathcal A$ be a point on the Hitchin base such that $a_3$ has simple zeros disjoint from the ramification divisor of the spectral curve $X_a$. Then we have \[|\mathcal L_\alpha \cap h^{-1}(a)|\le 3^{2g}\binom{6g-6}{m_1,m_2}.\]
\end{prop}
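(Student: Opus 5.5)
The plan is to follow the three-step program of the rank $2$ count (Proposition \ref{thm:rk2_count}) and carry out only the first two steps, which give the upper bound. First, bound the number of $a$-admissible fixed points in $F_\alpha$. Second, show that each admissible fixed point contributes at most one intersection point, using Proposition \ref{HH_count}.

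\emph{Step 1: the divisors.} Take $(E,\Phi)\in\mathcal L_{\mathcal E}\cap h^{-1}(a)$, where $\mathcal E=(L_1\oplus L_2\oplus L_3,\Phi)$ has chain form \eqref{chain}. By \eqref{zv_slice}, or equivalently Proposition \ref{zv_simp}, we can work in a smooth splitting $E=L_1\oplus L_2\oplus L_3$ in which:
\begin{itemize}
\item the only non-diagonal term of $\dolb_E$ is the weight $2$ entry $L_3\to L_1$;
\item the Higgs field has the form
\[\Phi=\begin{pmatrix}0&\beta_1&\beta_2\\ \Phi_1&0&\beta_1'\\ 0&\Phi_2&0\end{pmatrix}.\]
\end{itemize}
In particular the weight $0$ diagonal entries vanish, and $\Phi_1,\Phi_2$ are the holomorphic sections of the limit fixed point. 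The determinant is computed pointwise, so it is insensitive to the extension: expanding gives $\det\Phi=\beta_2\Phi_1\Phi_2$, hence $a_3=\pm\beta_2\Phi_1\Phi_2$. Locally $\beta_2$ is holomorphic wherever $\Phi_1\Phi_2\neq0$. Near the zeros of $\Phi_1\Phi_2$ I would argue via the associated graded that $\Phi_1\Phi_2$ divides $a_3$. This gives
\[\delta_1+\delta_2\le\div a_3 .\]
Since $a_3$ has $6g-6$ simple zeros, $\delta_1$ and $\delta_2$ must be disjoint subsets of $\div a_3$ of sizes $m_1$ and $m_2$. There are $\binom{6g-6}{m_1,m_2}$ such choices. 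For each choice there are $3^{2g}$ possibilities for $L_1$, as explained in \S\ref{4_1}. So $|F_\alpha(a)|\le 3^{2g}\binom{6g-6}{m_1,m_2}$.

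\emph{Step 2: at most one point per admissible $\mathcal E$.} For $\mathcal E\in F_\alpha(a)$, the divisor $\delta_+=\delta_1+\delta_2$ is reduced, so $\mathcal E$ is very stable by Theorem \ref{HH_vs}. By hypothesis $\delta_+$ avoids the ramification divisor, so Proposition \ref{HH_count} applies. I will need smoothness of $X_a$ here: I would either argue it follows from the hypotheses on $a_3$, or impose it as part of the genericity assumption. Hence points of $W^+_{\mathcal E}\cap h^{-1}(a)$ are determined by the choice, at each zero $c$ of $\Phi_i$, of which eigenvalues of $\Phi_c$ lie on $(E_i)_c$. The zero-velocity condition pins these choices down.
\begin{itemize}
\item At a zero $c$ of $\Phi_1$, we have $\Phi_c((E_1)_c)=0$, so the eigenvalue on $(E_1)_c$ is $0$.
\item At a zero $c$ of $\Phi_2$, the field condition \eqref{zv_higgs} makes $\Phi|_{E_2}=\begin{psmallmatrix}0&\beta_1\\ \Phi_1&0\end{psmallmatrix}$ traceless. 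Since $\tr\Phi_c=0$ as well, the eigenvalue of $\Phi_c$ \emph{not} on $(E_2)_c$ is $0$.
\end{itemize}
In both cases $a_3(c)=0$, so $0$ is an eigenvalue over $c$. Because $c$ is unramified, $0$ is a simple eigenvalue, that is, a single point of $\pi_a^{-1}(c)$. Thus every divisor $\Gamma_i$ is forced, and the correspondence in Proposition \ref{HH_count} gives $|\mathcal L_{\mathcal E}\cap h^{-1}(a)|\le1$. Summing over $F_\alpha(a)$ gives the stated bound.

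The main obstacle is making Step 1 rigorous globally. The entry $\beta_2$ is not a global holomorphic section of $L_3^{-1}L_1K$ once $\dolb_E$ has the weight $2$ extension term. I would handle this by showing that $a_3$ vanishes at each zero of $\Phi_1$ and of $\Phi_2$, so that $\Phi_1\Phi_2$ divides $a_3$ because all zeros are simple. To do this, use that $(E_1)_c$, respectively $(E_2)_c$, is $\Phi_c$-invariant with the forced zero eigenvalue described in Step 2. That eigenvalue argument needs only the filtration, not a global splitting.
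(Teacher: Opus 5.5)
Your proposal is correct and follows essentially the same route as the paper. The zero-velocity normal form gives $\delta_1+\delta_2\le\div a_3$, hence at most $3^{2g}\binom{6g-6}{m_1,m_2}$ admissible (very stable) fixed points, and Proposition \ref{HH_count} with the forced choices (nonzero eigenvalues over zeros of $\Phi_1$, the zero eigenvalue over zeros of $\Phi_2$) gives at most one point for each.

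The ``main obstacle'' you flag is not real: the $(1,3)$ component of $\dolb_E\Phi=0$ reads $\dolb\beta_2=0$, because the weight $2$ term of $\dolb_E$ only pairs with the vanishing diagonal entries of $\Phi$. So $a_3=\pm\Phi_1\Phi_2\beta_2$ is an identity of holomorphic sections and gives the divisibility with multiplicities. This matters, because your pointwise-vanishing fallback on its own would not exclude a double zero of $\Phi_1$, or a common zero of $\Phi_1$ and $\Phi_2$, lying over a simple zero of $a_3$.
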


\begin{proof}
A point in $\mathcal L_\alpha \cap h^{-1}(a)$ has a Higgs field of the form
\begin{equation}\label{field_3}
	\Phi = \begin{pmatrix} 
		0 & \phi_1 & \phi_3 \\ 
		\Phi_1 & 0 & \phi_2 \\ 
		0 & \Phi_2 & 0
	\end{pmatrix}
\end{equation}
so it satisfies
\begin{equation}\label{divisor_3}
	\div \Phi_1 \le \div a_3, \qquad \div \Phi_2 \le \div a_3.
\end{equation}

  Since we are fixing $a_3 \in H^0(K^3)$, there are
\[\binom{6g-6}{m_1,m_2}\] ways to distribute its zeros among $\Phi_1$ and $\Phi_2$ and then there are $3^{2g}$ choices for the fixed points once $\div \Phi_1$ and $\div \Phi_2$ are chosen. 
Note that since $a_3$ has reduced divisor, all these choices give rise to very stable fixed points.

Given $\mathcal E \in F_\alpha(a)$, a point in the intersection $\mathcal L_\mathcal E \cap h^{-1}(a)$ is in particular a point in $W_{\mathcal E}^+ \cap h^{-1}(a)$, as remarked earlier. Since $\mathcal E$ is very stable, it corresponds, by Proposition \ref{HH_count}, to a choice of two eigenvalues of $\Phi_c$ over every zero $c$ of $\Phi_1$ and a choice of one eigenvalue of $\Phi_c$ over every zero $c$ of $\Phi_2$.


	
	Looking at \eqref{field_3}, we see that when $\Phi_1(c) = 0$, the map $\Phi|_{E_1}$ is zero at $c$, thus its only eigenvalue is zero and we must choose the other two non-zero points of the spectral curve. When $\Phi_2(c)=0$, the map $(\Phi|_{E_2})_c$ has two non-zero symmetric eigenvalues so the one we must choose is the zero of the spectral curve.

	\begin{figure}[htbp]
		\centering
		\includegraphics[width=0.5\textwidth]{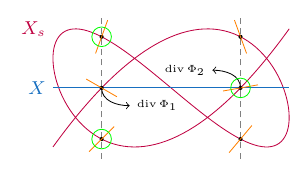}
		\caption{}
		\label{fig:my_image}
	\end{figure}

	Thus, out of all possible choices of pre-images of the spectral cover over the zeros of $\Phi_1$ and $\Phi_2$, there is in fact only one which can arise from a point in the Lagrangian: we must necessarily choose the non-zero eigenvalues over the zeros of $\Phi_1$ and the zero eigenvalue over the zeros of $\Phi_2$. This shows that there can exist at most one point in the intersection $|\mathcal L_{\mathcal E} \cap h^{-1}(a)|$, for each $\mathcal E \in F_\alpha(a)$.
	\end{proof}

If $n$ is even, the eigenvalue choices will double at the zeros of $\Phi_{n/2}$, as can be seen in the case of rank 4.

\begin{prop}
	Let $F_\alpha$ be a fixed point component of type $(1,1,1,1)$ with $\deg \Phi_i = m_i$, for $i=1,2,3$.

	Then, for $a=(a_2,a_3,a_4) \in \mathcal A$ with $\div(a_3a_4)$ reduced and such that $\div (\Phi_1 \Phi_2 \Phi_3)$ is disjoint from the ramification divisor of $X_a$, we have
	\[|\mathcal L_\alpha \cap h^{-1}(a)| \le 4^{2g}2^{m_2}\binom{8g-8}{m_1,m_3}\binom{6g-6}{m_2}.\] 
\end{prop}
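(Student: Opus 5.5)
The plan follows the proof of Proposition \ref{prop:eigenvalues}: first bound the number of $a$-admissible fixed points using divisibility of the coefficients of the characteristic polynomial, then bound $|\mathcal L_{\mathcal E}\cap h^{-1}(a)|$ for each such $\mathcal E$ using Proposition \ref{HH_count}.

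\emph{Step 1: divisibility.} By \eqref{zv_slice}, a point of $\mathcal L_\alpha$ has Higgs field
\[
\Phi=\begin{pmatrix} 0 & x_{12} & x_{13} & x_{14}\\ \Phi_1 & 0 & x_{23} & x_{24}\\ 0 & \Phi_2 & 0 & x_{34}\\ 0&0&\Phi_3&0\end{pmatrix}
\]
with respect to the smooth splitting of its limit. I would expand $a_4=\pm\det\Phi$ by listing the admissible permutations. Row $4$ must go to column $3$. If row $3$ goes to column $4$, we get the term $\Phi_1x_{12}\Phi_3x_{34}$. If row $3$ goes to column $2$, we get the term $\Phi_1\Phi_2\Phi_3x_{14}$. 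Hence $\Phi_1\Phi_3$ divides $a_4$.

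Similarly, $a_3$ is, up to sign, the sum of the principal $3\times 3$ minors. Only the minors on $\{1,2,3\}$ and $\{2,3,4\}$ contribute, giving $\Phi_1\Phi_2x_{13}$ and $\Phi_2\Phi_3x_{24}$, so $\Phi_2$ divides $a_3$. This gives
\[
\div\Phi_1+\div\Phi_3\le\div a_4,\qquad \div\Phi_2\le\div a_3 .
\]
Since $\div a_4$ (degree $8g-8$) and $\div a_3$ (degree $6g-6$) are reduced, there are at most $\binom{8g-8}{m_1,m_3}\binom{6g-6}{m_2}$ choices of the divisors $\delta_1,\delta_2,\delta_3$. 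For each choice there are $4^{2g}$ fixed points, by the $n$-th root count of Subsection \ref{4_1}. Because $\div(a_3a_4)$ is reduced, $\delta_+$ is reduced, so every $a$-admissible $\mathcal E$ is very stable by Theorem \ref{HH_vs}. Together with the ramification hypothesis, this puts us in the setting of Proposition \ref{HH_count}.

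\emph{Step 2: eigenvalue constraints at the zeros of the $\Phi_i$.} Since $\mathcal L_{\mathcal E}\cap h^{-1}(a)\subset W^+_{\mathcal E}\cap h^{-1}(a)$, each intersection point corresponds to a choice of divisors $\Gamma_i$ as in Proposition \ref{HH_count}. I would show that the zero velocity shape forces most of these choices.

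\begin{itemize}
\item At a zero $c$ of $\Phi_1$, the restriction $\Phi_c|_{(E_1)_c}$ is zero. So $\Gamma_1$ must consist of the three nonzero points of $\pi_a^{-1}(c)$. These exist and are distinct because $a_4(c)=0\ne a_3(c)$: the spectral polynomial becomes $\lambda(\lambda^3+a_2\lambda+a_3)$ up to signs, with $0$ a simple root.
\item At a zero $c$ of $\Phi_3$, the quotient $E/E_3$ carries the eigenvalue $0$. Hence $\Phi_c|_{(E_3)_c}$ has the three nonzero roots as eigenvalues, and $\Gamma_3$ is forced to be the point $0$.
\item At a zero $c$ of $\Phi_2$ we have $a_3(c)=0$, so the fibre is $\{\pm\mu,\pm\nu\}$. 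The restriction $\Phi_c|_{(E_2)_c}=\begin{psmallmatrix}0&x_{12}\\ \Phi_1&0\end{psmallmatrix}$ is traceless, so its eigenvalues form a symmetric pair. Therefore $\Gamma_2$ is the complementary symmetric pair, which leaves exactly two options.
\end{itemize}

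Multiplying gives at most $2^{m_2}$ points in $\mathcal L_{\mathcal E}\cap h^{-1}(a)$, and combining with Step 1 yields the bound.

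The main obstacle I expect is Step 2 at the zeros of $\Phi_3$ and $\Phi_2$. There I need to identify the eigenvalues of $\Phi_c$ on the Simpson filtration piece $(E_i)_c$, which is described only up to the zero-velocity triangular form of \eqref{zv_slice}. I would read the eigenvalues directly from the block-triangular structure: $(E_i)_c$ is $\Phi_c$-invariant exactly when $\Phi_i(c)=0$, and the induced map on the quotient is again of zero velocity shape. I would also check carefully that $0$ is indeed a simple, unramified point of the fibre over the zeros of $a_4$, and that the $\pm$ pairs are distinct over the zeros of $a_3$. This is where the hypotheses on $\div(a_3a_4)$ and on ramification are used.
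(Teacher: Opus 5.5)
Your proposal is correct and takes essentially the same route as the paper. You read off $\Phi_1\Phi_3\mid a_4$ and $\Phi_2\mid a_3$ from the characteristic polynomial of the zero-velocity Higgs field, take $4^{2g}$ very stable fixed points per divisor choice, and then apply Proposition~\ref{HH_count} with forced choices at the zeros of $\Phi_1,\Phi_3$ and two symmetric-pair choices at each zero of $\Phi_2$. Your inequality $\div\Phi_1+\div\Phi_3\le\div a_4$ is slightly cleaner than the paper's two separate inequalities, since it directly justifies the multinomial coefficient $\binom{8g-8}{m_1,m_3}$.
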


\begin{proof}
	Let $\mathcal E \in F_\alpha$ be a type $(1,\ldots,1)$ fixed point given in chain form by
	\begin{equation}\label{fixed_4}
		\begin{tikzcd}
			L_1 \arrow[r,"\Phi_1"] & L_2 \arrow[r,"\Phi_2"] & L_3 \arrow[r,"\Phi_3"] & L_4
		\end{tikzcd}
	\end{equation}
	with $\deg \Phi_i = m_i$. The Higgs field of a point $(E,\Phi) \in \mathcal L_{\mathcal E}$ takes the form
	\begin{equation}\label{field_4}
		\Phi = \begin{pmatrix}
			0 & \phi_1 & \phi_2 & \phi_3 \\ 
			\Phi_1 & 0 & \phi_4 & \phi_5 \\ 
			0 & \Phi_2 & 0 & \phi_6 \\ 
			0 & 0 & \Phi_3 & 0 
		\end{pmatrix}.
	\end{equation}
	 Calculating its characteristic polynomial, we obtain
	\[ \lambda^4 - (\Phi_1 \phi_1 + \Phi_2 \phi_4 + \Phi_3 \phi_6)\lambda^2 - \Phi_2(\Phi_1 \phi_2 + \Phi_3\phi_5)\lambda + \Phi_1 \Phi_3 (-\Phi_2 \phi_3 + \phi_1 \phi_6) \] so, if $(E,\Phi) \in h^{-1}(a_2,a_3,a_4)$, it follows that 
\begin{equation}\label{divisor_4}
		\div \Phi_1 \le \div a_4, \qquad \div \Phi_2 \le \div a_3,\qquad \div \Phi_3 \le \div a_4.
\end{equation}

	Thus, fixing $a \in \mathcal A$ with $\div(a_3a_4)$ reduced (this condition will be generalised for any rank in Proposition \ref{prop:generic}), these inequalities impose conditions on the fixed point $\mathcal E$ given by \eqref{fixed_4}, which imply that there are at most
	\begin{equation}
		4^{2g}\binom{8g-8}{m_1}\binom{8g-8-m_1}{m_3}\binom{6g-6}{m_2}
	\end{equation}
	 fixed points in $F_\alpha(a)$ which will again be all very stable, so Proposition \ref{HH_count} applies.
	

	If $c \in \div \Phi_1\Phi_3$, looking at \eqref{field_4}, the situation is very similar to the rank $3$ case since $0$ will again be an eigenvalue and so we either choose the non-zero eigenvalues (if  $\Phi_1(c)=0$)  or $0$ (if $\Phi_{3}(c)=0$), giving rise to just one choice in any case.


	If $c \in \div \Phi_2$, the Higgs field of a point in $\mathcal L_\alpha$ takes the form
\[\Phi_c = \begin{pmatrix}
		0 & \phi_1 & \phi_2 & \phi_3 \\ 
		\Phi_1 & 0 & \phi_4 & \phi_5 \\ 
		0 & 0 & 0 & \phi_6 \\ 
		0 & 0 & \Phi_3 & 0 
	\end{pmatrix}.\] Thus $(E_2)_c$ is $\Phi_c$-invariant and the trace of its restriction is zero. Since $\Phi_c$ itself also has zero trace, this means that the eigenvalues of $\Phi_c$ can be partitioned into two blocks  $\{x,-x\}$ and $\{y,-y\}$ with $x \ne y$ and both non-zero (since $c$ is not in the ramification divisor of $X_a$, by hypothesis). Working backwards, after choosing $\div \Phi_2$ from the zeros of $a_3$, we can choose either one of the two pairs of eigenvalues that sum to $0$, giving rise to $2$ choices for each zero of $\Phi_2$.



	We then conclude that, given an admissible fixed point, there are at most
	$1^{m_1}2^{m_2}1^{m_3}$ choices of eigenvalues, i.e., we have $|\mathcal L_{\mathcal E} \cap h^{-1}(a)| \le 2^{m_2}$, for each $\mathcal E \in F_\alpha(a)$. \qedhere 

\end{proof}


	For rank $n\ge 5$, it is not true in general that the $\Phi_i$ divide some coefficient $a_k$ of the characteristic polynomial of $\Phi$, as in \eqref{divisor_2}, \eqref{divisor_3} and \eqref{divisor_4}, although it is easy to see that $\Phi_1$ and $\Phi_{n-1}$ always divide the determinant $a_n$. This was fundamental in the above cases to obtain a bound on the number of possible divisors of the $\Phi_i$. What can be observed is that, at a zero $c$ of $\Phi_i$, the restriction $\Phi_c$ to $(E_i)_c$ has trace zero so the characteristic polynomial of $\Phi_c$ must split into a product of two polynomials associated to traceless matrices. More precisely, there must exist some $b_k,c_k \in K^k_c \cong \C$ such that the following equation holds
\begin{equation} \label{eq:traceless}
	\lambda^n + a_2(c) \lambda^{n-2} + \cdots + a_n(c) = (\lambda^i + b_2 \lambda^{i-2}+\cdots +b_i)(\lambda^{n-i} + c_2 \lambda^{n-i-2}+\cdots +c_{n-i}).
\end{equation}
Such a decomposition exists if and only if the coefficientes of the characteristic polynomial of $\Phi_c$ satisfy a certain polynomial relation, as shown in the following result.

\begin{prop}\label{resultant}
	For each $n \ge 3$ and $1\le i \le n-1$, there exists an irreducible polinomial
	\[R_{n,i} \in \C[X_2,\ldots,X_n]\] that vanishes at a point $(a_2(c),\ldots,a_n(c)) \in \C^{n-1}$ if and only if a decomposition of type \eqref{eq:traceless} exists.
\end{prop}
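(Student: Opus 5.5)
The plan is to realize the locus of characteristic polynomials admitting a decomposition \eqref{eq:traceless} as the image of a multiplication map and to use elimination theory. Consider the morphism
\[
\mu_{n,i}\colon \C^{i-1}\times\C^{n-i-1}\longrightarrow \C^{n-1},
\]
which sends $(b_2,\ldots,b_i;c_2,\ldots,c_{n-i})$ to the coefficients $(a_2,\ldots,a_n)$ of the product
\[
(\lambda^i+b_2\lambda^{i-2}+\cdots+b_i)\,(\lambda^{n-i}+c_2\lambda^{n-i-2}+\cdots+c_{n-i}).
\]
The $\lambda^{n-1}$ coefficient vanishes automatically, since both factors are traceless. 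A decomposition of type \eqref{eq:traceless} exists at $(a_2(c),\ldots,a_n(c))$ exactly when this point lies in the image of $\mu_{n,i}$. The statement therefore reduces to showing that $\operatorname{Im}\mu_{n,i}$ is a closed irreducible hypersurface in $\C^{n-1}$. Once that is known, $R_{n,i}$ is taken to be a generator of its (prime, principal) ideal.

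The steps run in the following order.

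\textbf{Closedness.} The map $\mu_{n,i}$ is finite, hence proper. To see this, pass to roots: the coordinate ring $\C[b,c]$ is a subring of $\C[x_1,\ldots,x_i,y_1,\ldots,y_{n-i}]/(\sum x,\sum y)$, namely the invariants under $S_i\times S_{n-i}$. This ring is integral over $\C[a]$, because the $a_k$ are elementary symmetric functions in all the roots together and every root satisfies the monic equation $\lambda^n+a_2\lambda^{n-2}+\cdots+a_n=0$. Integrality gives finiteness, so the image is closed.

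\textbf{Irreducibility.} The source is an affine space, hence irreducible, so its image is irreducible.

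\textbf{Codimension one.} Because $\mu_{n,i}$ is finite, the image has dimension $(i-1)+(n-i-1)=n-2$, which is codimension one in $\C^{n-1}$. Since $\C[X_2,\ldots,X_n]$ is a UFD, every height-one prime is principal, generated by an irreducible polynomial. This gives $R_{n,i}$, and $V(R_{n,i})=\operatorname{Im}\mu_{n,i}$ is precisely the "if and only if" claim.

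The step most likely to need care is finiteness, in particular making precise that $\C[b,c]$ is integral over $\C[a]$ and not just over the full ring of symmetric functions. The route through roots handles this cleanly: each $b_k$ and $c_k$ is a polynomial in roots that are integral over $\C[a]$. The tracelessness constraint also has to be checked so that the dimension count is exactly $n-2$; the subtle point is that the traceless condition on the first factor forces it on the second only because $a_1=0$, and here both are imposed on the source. For $i=n-i$ the map is generically $2\!:\!1$ (swap the factors), but this does not affect dimension or irreducibility. Optionally, $R_{n,i}$ can be made concrete as a norm $\prod(\sum_{j\in S}x_j)$ over $i$-subsets $S$, taken up to that symmetry, expressed in the $a_k$. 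This is only illustration and is not needed.
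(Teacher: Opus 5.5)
Your proposal is correct and follows essentially the same route as the paper: the same multiplication map (the paper's $\phi\colon Z_i\times Z_{n-i}\to Z_n$), the same dimension count giving a codimension-one image, and irreducibility inherited from the irreducible affine source. The only real difference is how closedness of the image is shown. You use integrality of $\C[b,c]$ over $\C[a]$, so $\mu_{n,i}$ is finite and hence closed. The paper instead argues that the constructible image is closed in the analytic topology, via a bounded-roots subsequence argument. These are two versions of the same properness fact, and your version additionally makes explicit, via height-one primes in a UFD, why a single generator $R_{n,i}$ exists.
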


\begin{proof}
	Let $Z_k$ denote the affine space of monic degree $k$ polynomials whose degree $k-1$ coefficient is zero. We are interested in the image of the following algebraic map
	\[\begin{array}{cccc}
		\phi: & Z_i \times Z_{n-i} & \longrightarrow & Z_n \\ 
		& (f,g) & \longmapsto & f\cdot g.
	\end{array}\] Note that a decomposition of the type \eqref{eq:traceless}, when it exists, is determined by a partition of the $n$ roots of the polynomial on the left hand side in two blocks of size $i$ and $n-i$, hence there at most $\binom{n}{i}$ preimages of $\phi$ over a point in $Z_n$, in particular, they are all zero-dimensional. Since $\dim(Z_i \times Z_{n-i}) = (i-1)+(n-i-1) = n-2$ and $\dim Z_n = n-1$, we find that the Zariski closure of the image has codimension $1$ in $Z_n$. Thus there exists a polynomial $R_{n,i} \in \C[X_2,\ldots,X_n]$, the coordinate ring of $Z_n$, such that $\overline{\im \phi}$ is precisely the vanishing locus of $R_{n,i}$. Moreover, since the domain of $\phi$ is irreducible, it follows that $R_{n,i}$ is an irreducible polynomial.
	
	It remains to show that the image of $\phi$ is in fact closed in the Zariski topology. It is a constructible subset of an affine variety, so it is Zariski-closed if and only if it is closed in the complex analytic topology. Thus let $(P_k)_k$ be a sequence of polynomials in the image of $\phi$ converging to $P \in Z_n$. Denote the ordered set of roots of each $P_k$ by $(\lambda_k^j)_{j=1,\ldots,n}$, where we can assume that the first $i$ roots sum to zero. Since $P_k$ converges, the set of all roots is bounded, so taking a subsequence of $\lambda_k^j$, for each $j$, we can assume they converge to a set $\lambda_\infty^j$ of points of $\C$. Clearly $\sum_{j=1}^i \lambda_\infty^j = 0$ and
	\[P(\lambda) = \lim_k P_k(\lambda) = \lim_k (\lambda-\lambda_k^1)\cdots (\lambda-\lambda_k^n)=(\lambda-\lambda_\infty^1)\cdots (\lambda-\lambda_\infty^n).\] This means that the $\lambda_\infty^j$ are roots of $P$, so $P$ has a factorization of type \eqref{eq:traceless}, i.e., it belongs to the image of $\phi$.
\end{proof}

The following result shows that it makes sense to evaluate each polynomial $R_{n,i}$ at a point of the Hitchin base.

\begin{lemma} \label{lem:homo}
	The polynomial $R_{n,i}(X_2,\ldots,X_n)$ is weighted homogeneous, i.e., for all $t \in \C$, 
	\[R_{n,i}(t^2 X_2,\ldots,t^n X_n) = t^{d(n,i)} R_{n,i}(X_2,\ldots,X_n),\] for some $d(n,i) \in \mathcal \N$, which we call the \emph{weighted degree} of $R_{n,i}$.

	In particular, for every $a=(a_2,\ldots,a_n) \in \mathcal A$, the expression  
	\[R_{n,i}(a) \coloneqq R_{n,i}(a_2,\ldots,a_n)\]
	represents a well-defined holomorphic section of $K^{d(n,i)}$.
	
\end{lemma}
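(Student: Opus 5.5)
The plan is to show that the hypersurface $\overline{\im\phi}=V(R_{n,i})\subset Z_n\cong\C^{n-1}$ is invariant under the weighted $\C^*$-action $t\cdot(X_2,\ldots,X_n)=(t^2X_2,\ldots,t^nX_n)$. Weighted homogeneity of its irreducible defining polynomial will then follow from a standard argument.

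For invariance, I will act on the domain of $\phi$ by rescaling roots. If $f(\lambda)=\lambda^i+b_2\lambda^{i-2}+\cdots+b_i$, set $f_t(\lambda)=t^if(\lambda/t)$, which has coefficients $t^kb_k$. Define $g_t$ in the same way. Then $f_t\in Z_i$ and $g_t\in Z_{n-i}$, since the trace-zero condition is preserved, and $f_tg_t=t^n(fg)(\lambda/t)$, whose coefficients are $t^ka_k$. Hence $\im\phi$, which is closed by Proposition \ref{resultant}, is stable under the action.

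Next I will write $R=R_{n,i}=\sum_{d}R^{(d)}$, decomposed into weighted-homogeneous components, and put $R_t(X)=R(t^2X_2,\ldots,t^nX_n)=\sum_d t^dR^{(d)}(X)$. For $t\neq0$, $R_t$ is irreducible and vanishes exactly on $t^{-1}\cdot V(R)=V(R)$. By the Nullstellensatz, $R_t$ and $R$ generate the same radical ideal. Since both are irreducible, they generate the same principal prime ideal, so $R_t=c(t)R$ with $c(t)\in\C^*$. Comparing components gives $t^dR^{(d)}=c(t)R^{(d)}$ for every $d$. If two distinct components $d\neq d'$ were nonzero, then $t^d=c(t)=t^{d'}$ for all $t\in\C^*$, which is impossible. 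So $R$ has a single component of some degree $d(n,i)\ge0$, and the identity holds for all $t\in\C$, including $t=0$ by continuity or by polynomiality. Because $R$ is irreducible, it is nonconstant, which forces $d(n,i)\ge1$ (and in any case $d(n,i)\in\N$).

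For the second assertion, each monomial $X_2^{e_2}\cdots X_n^{e_n}$ appearing in $R$ satisfies $\sum_k ke_k=d(n,i)$. Substituting $a_k\in H^0(K^k)$ therefore gives $\prod a_k^{e_k}\in H^0(K^{d(n,i)})$. Equivalently, in a local trivialization of $K$, changing the frame by a transition function $t$ rescales $a_k$ by $t^k$, and homogeneity shows that the local expressions $R(a_2,\ldots,a_n)$ transform as sections of $K^{d(n,i)}$. They are holomorphic because $R$ is a polynomial.

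The only step needing any care is the passage from invariance of the zero locus to homogeneity, which uses the irreducibility of $R_{n,i}$ from Proposition \ref{resultant} via the Nullstellensatz. Everything else is direct.
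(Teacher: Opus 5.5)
Your proof is correct. It shares its first step with the paper: you rescale $f,g$ so that $b_k,c_k,a_k$ pick up $t^k$, which makes $V(R_{n,i})$ invariant under the weighted action. It then deduces weighted homogeneity by a different route.

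The paper substitutes $X_k=Y_k^k$ and notes that $P_{n,i}(Y)=R_{n,i}(Y_2^2,\ldots,Y_n^n)$ has zero set invariant under ordinary scaling, hence a cone. It asserts that $P_{n,i}$ is therefore homogeneous, and transfers this back through the surjective map $Y\mapsto(Y_2^2,\ldots,Y_n^n)$. That route never uses irreducibility of $R_{n,i}$, so in principle it works for any polynomial with invariant zero locus. However, the step ``zero set a cone $\Rightarrow$ homogeneous'' is left unjustified. It is not quite immediate here, because $P_{n,i}$ need not be irreducible or even reduced: already $R_{n,1}=X_n$ gives $Y_n^n$. So one needs that the ideal of a cone is homogeneous and that factors of homogeneous polynomials are homogeneous.

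Your argument stays inside the weighted grading. It uses the irreducibility from Proposition \ref{resultant} to get $R_t=c(t)R$ in one line via the Nullstellensatz, after which comparing weighted components finishes the job. This makes it self-contained. It also makes explicit why $R_{n,i}(a)\in H^0(K^{d(n,i)})$: every monomial $\prod a_k^{e_k}$ has $\sum_k k e_k=d(n,i)$. The paper leaves that point implicit.

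A small remark: you do not actually need closedness of $\operatorname{Im}\phi$. The closure of a $\C^*$-stable set is automatically $\C^*$-stable.
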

\begin{proof}
	Let $c \in X$ and assume that the coefficients $a_k(c)$ are such that the decomposition \eqref{eq:traceless} holds. Equating coefficients on both sides, we have equations of the form
	\[a_k(c) = \sum_j b_j c_{k-j}\] in the variables $b_j,c_j$. If we multiply $a_k(c)$ by $t^k$ then we obtain true equations if we multiply each $b_j$ and $c_j$ by $t^j$.
	Thus, by Proposition \ref{resultant}, we have, for all $t \in \C$ and $(x_2,\ldots,x_n) \in \C^{n-1}$, \[R_{n,i}(x_2,\ldots,x_n)=0 \implies R_{n,i}(t^2x_2,\ldots,t^nx_n)=0. \] Now consider the polynomial $P_{n,i}(Y_2,\ldots,Y_n)\coloneqq R_{n,i}(Y_2^2,\ldots,Y_n^n)$. It satisfies
	\[P_{n,i}(y_2,\ldots,y_n)=0 \implies P_{n,i}(ty_2,\ldots,ty_n)=0,\] for every $t \in \C$ and $(y_2,\ldots,y_n) \in \C^{n-1}$. Thus the zero set of $P_{n,i}$ is the cone of a projective variety, meaning that $P_{n,i}$ is homogeneous, i.e., 
	\[P_{n,i}(tY_2,\ldots,tY_n) = t^d P_{n,i}(Y_2,\ldots,Y_n)\] for some natural number $d$. Replacing $Y_k = X^{k}$, this implies that $R_{n,i}$ is weighted homogeneous.
\end{proof}

Thus the fact that at each zero of $\Phi_i$ we have a decomposition of the type \eqref{eq:traceless} can be translated as the inequality 
\begin{equation}\label{ineq:divisor}
	\div\Phi_i \le \div R_{n,i}(a).
\end{equation}
Here are some easy observations:
\begin{itemize}
 	\item the resultant $R_{n,i}$ does not change if we switch the variables $b_k$ with $c_k$, so
 	\begin{equation}\label{R_symmetry}
 		R_{n,i}=R_{n,n-i};
 	\end{equation} 
 	
 	\item if $i=1$, equation \eqref{eq:traceless} has a (unique) solution at $c \in X$ if and only if $a_n(c)=0$, so $R_{n,1} = R_{n,n-1}=X_n \in \C[X_2,\ldots,X_n]$. 
 \end{itemize}

The following result computes the weighted degress $d(n,i)$.

\begin{prop}\label{R_degree}
	For rank $n \ge 3$, the polynomials $R_{n,i}(X_2,\ldots,X_n)$ have weighted degree
\[d(n,i) = \begin{cases}
			\binom{n}{i} \text{ if } i \neq n/2 \\ 
			\frac{1}{2}\binom{n}{i} = \binom{n-1}{i-1} \text{ if } i = n/2.
		\end{cases}\]
 \end{prop}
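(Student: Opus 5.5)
The plan is to identify $R_{n,i}$, up to a nonzero constant, with an explicit product of linear forms in the roots, and then read off its weighted degree as an ordinary degree in the roots. Write the characteristic polynomial as $\prod_{j=1}^n(\lambda-\lambda_j)$ with $\sum_j\lambda_j=0$, so that $X_k$ is, up to sign, the $k$-th elementary symmetric function $e_k(\lambda_1,\ldots,\lambda_n)$. A decomposition \eqref{eq:traceless} exists if and only if some $i$-element subset $S\subset\{1,\ldots,n\}$ satisfies $\ell_S\coloneqq\sum_{j\in S}\lambda_j=0$. This was already used in the proof of Proposition \ref{resultant}.

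\textbf{Defining the candidate.} For $i\ne n/2$, set
\[
Q_{n,i}\coloneqq\prod_{|S|=i}\ell_S .
\]
This is $S_n$-invariant. Restricted to the hyperplane $H=\{\sum\lambda_j=0\}$, it therefore lies in $\C[\lambda]^{S_n}/(e_1)=\C[X_2,\ldots,X_n]$. For $i=n/2$, note that on $H$ we have $\ell_{S^c}=-\ell_S$. In this case set
\[
Q_{n,n/2}\coloneqq\prod_{\{S,S^c\}}\ell_S ,
\]
choosing one representative from each complementary pair, for instance the one containing $1$.

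One must check that this half-product is genuinely $S_n$-invariant on $H$, and not just invariant up to the sign character. For a transposition $(a\,b)$, the factors that change sign are those for which the chosen representative switches. I would count these: they are the pairs $\{S,S^c\}$ with $a$ and $b$ on opposite sides, subject to the constraint of which side contains $1$. I would then verify that this number is even for $n\ge4$ even. The case $n=4$ checks out directly: the product $(\lambda_1+\lambda_2)(\lambda_1+\lambda_3)(\lambda_1+\lambda_4)$ is a multiple of $e_3$. This is the step I expect to be the fiddliest. If the sign character does appear, I would instead need to work with the square, and the formula would have to be re-examined.

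\textbf{Identification with $R_{n,i}$.} By construction, the zero locus of $Q_{n,i}$ in $Z_n$ coincides with the image of $\phi$, which by Proposition \ref{resultant} is exactly the zero locus of $R_{n,i}$. Since $R_{n,i}$ is irreducible, the Nullstellensatz gives $Q_{n,i}=c\,R_{n,i}^m$ for some constant $c\ne0$ and some $m\ge1$. To get $m=1$, I would show that $Q_{n,i}$ is irreducible in the invariant ring, using the UFD $\C[\lambda]/(e_1)\cong\C[\lambda_1,\ldots,\lambda_{n-1}]$.

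\begin{itemize}
\item The linear forms $\ell_S$ restricted to $H$ are pairwise non-proportional, except for the pairs $\ell_S$, $\ell_{S^c}$. When $i\ne n/2$ these complementary forms have different sizes, so they are not both among the factors of $Q_{n,i}$. When $i=n/2$ they were identified when defining $Q_{n,n/2}$.
\item $S_n$ permutes the factors of $Q_{n,i}$ transitively, up to scalars.
\item Hence any invariant factor of $Q_{n,i}$ in the invariant ring, viewed in the UFD, must contain a full orbit of linear factors. It is therefore all of $Q_{n,i}$ up to a unit.
\end{itemize}

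\textbf{Reading off the degree.} Finally, $X_k$ is homogeneous of degree $k$ in the $\lambda_j$. So the weighted degree of Lemma \ref{lem:homo} equals the ordinary degree in the roots, which is the number of linear factors. This gives $d(n,i)=\binom{n}{i}$ for $i\ne n/2$, and $d(n,n/2)=\tfrac12\binom{n}{n/2}=\binom{n-1}{n/2-1}$. As a consistency check, the case $i=1$ gives $\prod_j\lambda_j=\pm X_n$, of weighted degree $n=\binom n1$, in agreement with the earlier observation that $R_{n,1}=X_n$.
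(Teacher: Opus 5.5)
Your proposal is correct and follows essentially the paper's route: you realize $R_{n,i}$, up to a constant, as the product of the linear forms $\sum_{j\in J}\lambda_j$ over $|J|=i$, halved via complementary pairs represented by the subsets containing $1$ when $i=n/2$, compare zero loci to get a power of $R_{n,i}$, and use that the linear factors are pairwise non-associate primes in $\C[\lambda_1,\ldots,\lambda_n]/(\sum_j\lambda_j)$ to force the exponent to be one. The parity check you left open is settled in the paper exactly as you sketch: transpositions fixing $1$ permute your chosen factors without signs, while $(1\,k)$ negates precisely the $\binom{n-2}{n/2-1}$ factors $\lambda_1+\sum_{j\in J}\lambda_j$ with $k\notin J$, and this number is even because $\binom{2m}{m}=2\binom{2m-1}{m-1}$ for $m=n/2-1\ge 1$.
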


 \begin{proof}
 	Consider the ring of polynomials $\C[\lambda_1, \ldots, \lambda_n]/I$ where $I$ is the ideal generated by $\sum_j \lambda_j$.
 	
 	
 	For each $n\ge 3$ and $J \subseteq \{1,\ldots,n\}$, set
    \begin{equation}\label{S_J}
        S_J(\lambda_1,\ldots,\lambda_n)\coloneqq \sum_{j \in J}\lambda_j.
    \end{equation}
    For each $1\le i \le n-1$, consider the polynomial 
\begin{equation}\label{S_ni}
    S_{n,i}(\lambda_1,\ldots,\lambda_n) = \prod_{\substack{J \subseteq \{1,\ldots,n\} \\ |J| = i }} S_J(\lambda_1,\ldots,\lambda_n).
\end{equation}
    which is clearly symmetric in the variables $\lambda_1,\ldots,\lambda_n$. Hence there exists a polynomial $\widetilde S_{n,i} \in \C[X_2,\ldots,X_n]$ such that
 	\[S_{n,i}(\lambda_1,\ldots,\lambda_n) = \widetilde S_{n,i}(e_2,\ldots,e_n),\] where the $e_j$ are the elementary symmetric polynomials in the variables $\lambda_j$. Multiplying each $\lambda_j$ by a scalar $t$ results in $S_{n,i}$ being multiplied by $t^{\binom{n}{i}}$  and the same is true for $\widetilde S_{n,i}$ when multiplying each $e_j$ by $t^j$.

Note that, at a point $c \in X$ such that equation \eqref{eq:traceless} holds, the roots of the characteristic polynomial can be partitioned into two subsets, of size $i$ and $n-i$, such that the sum of the elements in each subset sums to $0$. By the definition of $R_{n,i}$, this means that the vanishing sets of $\widetilde S_{n,i}$ and $R_{n,i}$ are the same. Thus, 
 $\widetilde S_{n,i} = R_{n,i}^m$, for some $m \ge 1$.

 We claim that
\[m = \begin{cases}
	 1 \text{ if } i \neq n/2 \\ 
	 2 \text{ if } i = n/2,
\end{cases}\] which will show the intended result.

If $i \ne n/2$, we can assume $i<n/2$ by \eqref{R_symmetry}. We claim that, for any subset $J$ of size $i$, $S_{n,i}$ would have $S_J^{m}$ as a factor. Indeed, let $S_J$ be a factor of $S_{n,i}$, it will be a factor of $R_{n,i}(e_2,\ldots,e_n)^m$. Thus $S_J$, being prime by Lemma \ref{prime} below, is a factor of $R_{n,i}(e_1,\ldots,e_n)$, which shows our claim.
Looking at \eqref{S_ni}, if $m>1$, then there exist two different subsets $J,J'$ of size $i$, such that, for some $\alpha \in \C$,
$ S_J - \alpha S_{J'} \in I$. However the size of $J$ is less then half of $n$, which means that the polynomial cannot have all variables $\lambda_j$ and so cannot be a scalar multiple of the generator of $I$ (and cannot be any other multiple since it has degree $1$). Thus $m=1$ and $\widetilde S_{n,i}=R_{n,i}$.




Now if $i=n/2$, observe that the polynomial $S_{n,i}$ can be written as a square $Q_{n,i}^2$ in $\C[\lambda_1,\ldots,\lambda_n]/I$. Indeed, if $J\subseteq \{1,\ldots,n\}$ satisfies $|J| = n/2$, then $|J^c| = n/2$ as well, so $S_{n,i}$ is made up of products of the form
$S_J S_{J^c} = -S_J^2$ since
$S_J + S_{J^c} = S_{\{1,\ldots,n\}}=0$.

We can write a representative for $Q_{n,i}$ by picking the subsets which contain $1$, i.e.,
\[Q_{n,i} = \prod_{\substack{J \subseteq \{2,\ldots,n\} \\ |J| = n/2-1 }} 
(
\lambda_1 +S_J
).\] 
To check that it is a symmetric polynomial, it suffices to see the action of transpositions of the form $(1 \,\, k)$, since any permutation of $\{1,\ldots,n\}$ can be decomposed as a product of such transpositions. 
When acting on each factor of $Q_{n,i}$, the variable $\lambda_1$ either remains in the permuted factor -- meaning that it gets sent to another factor of $Q_{n,i}$ -- or it disappears. But when this happens, we can replace the subset $J$ by its complement and we get a factor of $Q_{n,i}$ but with a minus sign, remembering that the sum of all $\lambda_j$ is zero. The number of such minus signs equals
\[\#\{J \subset \{2,\ldots,n\} \mid |J|=n/2-1, k \notin J\}=\binom{n-2}{n/2-1}=\binom{2(i-1)}{i-1}.\] Since this is an even number for $i \ge 2$, the polynomial is indeed symmetric. Thus, as in the previous case, there exists a polynomial $\widetilde Q_{n,i} \in \C[X_2,\ldots,X_n]$ such that
\[Q(\lambda_1,\ldots,\lambda_n)=\widetilde Q(e_2,\ldots,e_n),\] where again the $e_j$ are the elementary symmetric polynomials of the $\lambda_j$, satisfying $\widetilde S_{n,i} = \widetilde Q_{n,i}^2$. A similar argument to the case $i \ne n/2$ shows that $\widetilde Q_{n,i}$ is irreducible, thus equal to $R_{n,i}$.
\end{proof}  

\begin{lemma}\label{prime}
	The polynomial $S_J$ is prime in the quotient ring $\C[\lambda_1, \ldots, \lambda_n]/I$.
\end{lemma}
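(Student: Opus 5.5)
We have to show that $S_J=\sum_{j\in J}\lambda_j$ generates a prime ideal in $A\coloneqq\C[\lambda_1,\ldots,\lambda_n]/I$, where $I=(\lambda_1+\cdots+\lambda_n)$. The plan is to rewrite $A/(S_J)$ as a polynomial ring, which is an integral domain, by a linear change of coordinates.

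First, $A$ is itself a polynomial ring. Since $\lambda_1+\cdots+\lambda_n$ is a nonzero linear form, we can solve for one variable, say $\lambda_n=-(\lambda_1+\cdots+\lambda_{n-1})$. This gives an isomorphism $A\cong\C[\lambda_1,\ldots,\lambda_{n-1}]$. Under it, $S_J$ goes to a linear form $\ell_J$ in $\lambda_1,\ldots,\lambda_{n-1}$.

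Next we check that $\ell_J$ is a nonzero linear form. Since $S_J$ is homogeneous of degree $1$, $\ell_J$ is either zero or a linear form. It is zero exactly when $S_J\in I$. By degree, this means $S_J=\alpha(\lambda_1+\cdots+\lambda_n)$ for some $\alpha\in\C$. Comparing coefficients, that happens iff $J=\{1,\ldots,n\}$ (or $J=\varnothing$). In our setting $1\le |J|=i\le n-1$, so $\ell_J\neq 0$. This is the same coefficient comparison already used in the proof of Proposition \ref{R_degree}.

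Finally, a nonzero linear form in a polynomial ring over a field can be completed to a basis of the degree-one part, that is, to a system of linear coordinates. Taking $\ell_J$ as the first coordinate, the quotient is
\[
\C[\lambda_1,\ldots,\lambda_{n-1}]/(\ell_J)\cong\C[y_2,\ldots,y_{n-1}].
\]
This is an integral domain, so $(\ell_J)$, and hence $(S_J)$ in $A$, is a prime ideal. Since $\ell_J$ is nonzero and not a unit, $S_J$ is a prime element.

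There is no real obstacle. The only care point is confirming that $S_J\notin I$, which uses $1\le|J|\le n-1$. For $|J|=n/2$ this still holds, since $J$ is a proper nonempty subset.
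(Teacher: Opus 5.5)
Your proof is correct and follows essentially the paper's route: both reduce the claim to $\C[\lambda_1,\ldots,\lambda_n]/(S_J,\lambda_1+\cdots+\lambda_n)$ being an integral domain. The paper gets this from the common zero set of the two linear forms being a linear subspace, while you use an explicit linear change of coordinates. Your version is slightly tighter: irreducibility of the zero set by itself only shows that the radical of $(S_J,\lambda_1+\cdots+\lambda_n)$ is prime, and you also make explicit the needed hypothesis $S_J\notin I$, i.e.\ that $J$ is a proper nonempty subset.
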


\begin{proof}
	As an element of $\C[\lambda_1, \ldots, \lambda_n]$, $S_J$ is irreducible, thus prime, since a polynomial ring over a field is a unique factorization domain. Such a property is not in general inherited when taking quotients, so while $S_J$ remains irreducible in the quotient, we need to show that it is also prime. To that effect, let $P$ and $Q$ be two polynomials such that $S_J$ is a factor of $PQ$ in the quotient ring, i.e., there exist polynomials $T,R$ such that \[PQ = TS_J + RS_{\{1,\ldots,n\}}.\] The result will be shown if the ideal generated by $S_J$ and $S_{\{1,\ldots,n\}}$ is prime in $\C[\lambda_1, \ldots, \lambda_n]$. But its zero set is the solution set of two homogeneous linear equations, hence a linear subspace of $\C^n$, which is irreducible. 
\end{proof}

We now know the (weighted) degrees of the $R_{n,i}$ so, if after evaluating them at a point of the Hitchin base, the resulting section has reduced divisor, we know how many distinct zeros it has, namely $d(n,i)(2g-2)$. Consequently, due to \eqref{ineq:divisor}, there is a finite number of choices for each divisor of $\Phi_i$, equal to \[\binom{d(n,i)(2g-2)}{\deg \Phi_i}.\]

The following result says that, for generic $a \in \mathcal A$, this is precisely what happens, along with other properties that simplify our counting problem.




\begin{prop}\label{prop:generic}
	Let $n \ge 3$ and $ 1\le i \le n-1$. For generic $a \in \mathcal A$, 
\begin{enumerate}
	\item each section $R_{n,i}(a)$ has reduced divisor;
	\item no two different sections $R_{n,i}(a)$ and $R_{n,j}(a)$ (meaning $j \notin \{i,n-i\}$) have a common zero;
	\item the zeros of $R_{n,i}(a)$ are disjoint from the ramification divisor of the spectral curve $X_a$;
	\item for $c \in \div R_{n,i}(a)$, there is a unique partition of the points of the spectral curve $X_a$ over $c$ into two blocks of sizes $i$ and $n-i$, such that the sum of points in each block is zero. 
\end{enumerate}
\end{prop}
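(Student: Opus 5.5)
Each item in Proposition \ref{prop:generic} is a Zariski-open condition on $a\in\mathcal A$, and $\mathcal A$ is irreducible. It therefore suffices to show that each condition is open and that it is satisfied by at least one $a$. The intersection of the finitely many resulting nonempty open sets is then open and dense.

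Openness is routine. The condition that a section of a fixed line bundle $K^d$, depending algebraically on $a$, has reduced divisor is open. The same holds for disjointness of the divisors of two such sections, and for disjointness of $\div R_{n,i}(a)$ from the branch divisor of $X_a$, which is the divisor of the discriminant $\Delta(a)\in H^0(K^{n(n-1)})$. To see this, use the incidence variety in $\mathcal A\times X$ and properness of $X$: the bad locus is the image under the proper projection of a closed subset. For (4), I would use the relation $\widetilde S_{n,i}=R_{n,i}^m$ from the proof of Proposition \ref{R_degree}. Uniqueness of the partition at a root $c$ of $R_{n,i}(a)$ means that exactly one factor $S_J$ (or exactly one complementary pair $\{J,J^c\}$ when $i=n/2$) vanishes at the eigenvalues over $c$. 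I expect this to follow from (1): if two distinct blocks vanished, then $R_{n,i}(a)$ would vanish to order at least $2$ at $c$, provided the eigenvalues, viewed as local functions on $X$ near $c$, are holomorphic and distinct. They are, because of (3). The precise point to check is that the vanishing order of $\widetilde S_{n,i}(a)$ at $c$ is at least the number of vanishing factors $S_J$, so (4) reduces to (1) and (3).

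For nonemptiness I would work pointwise and then globalise by a transversality argument. Consider the evaluation map $\mathcal A\to\bigoplus_k K^k_c$ at a point $c$, and more generally the jets at $c$. For $g\ge2$ and $k\ge2$, the linear system $|K^k|$ separates $1$-jets, and these jets can be prescribed independently in each degree $k$. Hence the map $\mathcal A\to\bigoplus_{k}J^1_c(K^k)$ is surjective for every $c$.

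The bad set for each condition is then the projection to $\mathcal A$ of an incidence set in $\mathcal A\times X$. Its fibre over $c$ is the preimage of a subvariety of the jet space, and that subvariety has codimension at least $2$ in jet space. For (1), it is $\{R_{n,i}=0,\ d R_{n,i}=0\}$; this has codimension $2$ because $R_{n,i}$ is irreducible, hence reduced, so its generic point is smooth, and the $1$-jet variation in the $X$-direction can be chosen freely. For (2), it is the intersection of two distinct irreducible hypersurfaces $\{R_{n,i}=R_{n,j}=0\}$. For (3), it is $\{R_{n,i}=0\}\cap\{\mathrm{disc}=0\}$, a proper intersection provided $R_{n,i}$ is not a factor of the discriminant. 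Once codimension $2$ is established, a dimension count gives an incidence set of dimension at most $\dim\mathcal A+1-2$, so its projection is a proper closed subset of $\mathcal A$.

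The main obstacle is proving that these pairs of hypersurfaces in $\C^{n-1}=\mathrm{Spec}\,\C[X_2,\dots,X_n]$ intersect properly. For (2), I would work in root coordinates: $R_{n,i}$ pulls back to $\prod_J S_J$, so a common component would force some hyperplane $S_J=0$ with $|J|=i$ to coincide modulo $I$ with some $S_{J'}=0$ with $|J'|=j\notin\{i,n-i\}$. This is impossible, by the argument of Lemma \ref{prime}. For (3), the discriminant pulls back to $\prod(\lambda_k-\lambda_l)^2$, and no hyperplane $\lambda_k=\lambda_l$ equals some $S_J=0$ modulo $I$ when $n\ge3$, with a small extra check when $n=3$ and $i=1$. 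The singular locus of $\{R_{n,i}=0\}$ has codimension $2$ since $R_{n,i}$ is reduced, which completes (1).
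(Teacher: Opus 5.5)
Your proof is correct, but it takes a genuinely different route from the paper.

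\textbf{The paper's route.} The paper never treats a general $a$. It restricts to totally split spectral data $a=(e_2(\lambda),\dots,e_n(\lambda))$ with $\lambda\in B=\{\lambda\in H^0(K)^n \mid \sum_j\lambda_j=0\}$. There $R_{n,i}(a)$ factors globally as $\prod_J S_J(\lambda)$ (or its square root when $i=n/2$), and each $S_J\colon B\to H^0(K)$ is linear and surjective. Reducedness of each factor then comes from Bertini for the base-point-free system $|K|$. The absence of common zeros of distinct factors (non-complementary of equal size for (1), of different sizes for (2)) comes from a codimension-two incidence count in $X\times B$ that only uses values of sections at points. Items (3) and (4) are then read off from the simplicity of the zeros of the product.

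\textbf{Your route.} You stay on all of $\mathcal A$ and use jet evaluation. This moves everything into the algebra of the hypersurfaces $\{R_{n,i}=0\}$ and $\{\mathrm{disc}=0\}$ in $\C^{n-1}$: the first is reduced, hence has singular locus of codimension at least two; distinct $R_{n,i}$ have no common components; and no $R_{n,i}$ shares a component with the discriminant. You verify these facts in root coordinates via Lemma~\ref{prime}.

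\textbf{What each buys.} The paper's route is more elementary. Everything is linear in $\lambda$, and neither $1$-jets nor the singularities of $\{R_{n,i}=0\}$ enter. However, it only exhibits special points, all with reducible spectral curves, and it relies on the openness of each condition, which is asserted rather than proved. Your dimension count yields openness and density at once, and it derives (4) from (1) and (3) at every such $a$. Your coprimality check with the discriminant also covers a case left implicit in the paper's treatment of (3): the vanishing block containing both or neither of two colliding eigenvalues.

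\textbf{One correction.} It is not true that $|K^k|$ separates $1$-jets for all $g\ge2$, $k\ge2$. Take $g=2$, $k=2$ and $c$ one of the six Weierstrass points. Then $K=\mathcal O(2c)$ gives $h^0(K^2(-2c))=h^0(K)=2$, so $H^0(K^2)\to J^1_c(K^2)$ has rank one. This does not hurt your argument. Over these finitely many points, the fibre of your incidence set for (1) still has codimension at least one in $\mathcal A$ from the $0$-jet condition alone, since each $K^k$ is base-point free. So the incidence set still has dimension at most $\dim\mathcal A-1$, and its projection is a proper closed subset.
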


\begin{proof}
	Since $\mathcal A$ is irreducible, it suffices to show that the space of sections satisfying each condition is open and non-empty.
	
	We begin with condition $1$. The space of sections in $\mathcal A$ satisfying it is open so it remains to show the existence of a single $a \in \mathcal A$ such that $R_{n,i}(a)$ has reduced divisor. To construct such a point we will consider the space of sections
	\[B=\left\{\lambda = (\lambda_1,\ldots,\lambda_n) \in H^0(K)^n \left| \, \sum \lambda_j = 0\right.\right\}\] and show that there exists a tuple such that the section \[S_{n,i}(\lambda) = \prod_{\substack{J \subseteq \{1,\ldots,n\} \\ |J| = i }} S_J(\lambda) \in H^0\left(K^{d(n,i)}\right)\] has reduced divisor (in the case of $n$ even and $i=n/2$, we consider the square root of $S_{n,i}$ as in Proposition \ref{R_degree}). Here $S_J:B \to H^0(K)$ is formally defined as in \eqref{S_J}. This will be enough since we then consider the tuple $(a_2,\ldots,a_n) \in \mathcal A$ given by the elementary symmetric polynomials on the element of $B$ we find.

	 We split the proof in two steps. For $\lambda \in B$ and $J$ a subset of $\{1,\ldots,n\}$ of size $i$, we will show that
	\begin{enumerate}
		\item[(A)] for $\lambda \in B$ generic, $S_J(\lambda)$ has reduced divisor for each subset $J$;
		\item[(B)] for $\lambda \in B$ generic, $S_J(\lambda)S_{J'}(\lambda)$ has reduced divisor for each pair of distinct and non-complementary subsets $J,J'$,
	\end{enumerate} which is precisely the same as $S_{n,i}(\lambda)$ having reduced divisor.

	For claim (A),  
	since the linear sistem associated to the canonical bundle $K$ of the base curve $X$ is base point free, 
	it follows by Bertini's theorem that the generic section of $K$ has reduced divisor. Denote by $U$ the open set of such sections. Now $S_J:B \to H^0(K)$ is a surjective map since, given a section $s \in H^0(K)$, there exist indices $j \in J$, $j' \notin J$ (since $|J| = i < n$) so the tuple $\lambda$ with $\lambda_{j} = s$, $\lambda_{j'}=-s$ and $0$ at all other indices belongs in $B$ and satisfies $S_J(\lambda)=s$. Thus $S_J^{-1}(U)$ is open and non-empty for each $J$ which in turn means that the finite intersection $\bigcap_{J \subset \{1,\ldots,n\}, |J|=i} S_J^{-1}(U)$ is also open and non-empty in $B$, proving the first claim.

	Moving on to claim (B), fix two distinct and non-complementary subsets $J,J' \subset \{1,\ldots,n\}$ of size $i$. 
	Consider the set 
	\[I_{J,J'} \coloneqq \{(x,\lambda) \mid S_J(\lambda)(x) = S_{J'}(\lambda)(x)=0\} \subset X \times B.\] Let $x \in X$ and consider the map
	\[\ev_{x,J,J'}: B \longrightarrow K_x \oplus K_x \] given by $\lambda \mapsto (S_J(\lambda)(x),S_{J'}(\lambda)(x))$. We now show that it is surjective. If $i\ne n/2$ we can assume $i < n/2$---since $R_{n,i}=R_{n,n-i}$---so that there exist indices $j\in J\setminus J'$, $j' \in J'\setminus J$ and $k \notin J \cup J'$. 
	Given a pair $(\alpha,\beta)$ of sections in $H^0(K)$ consider the tuple $\lambda$ defined by $\lambda_j = \alpha$, $\lambda_{j'}=\beta$, $\lambda_{k}=-\alpha-\beta$ and $0$ at all other indices. If $i=n/2$, since $J' \ne J^c$ we can again find some $k \notin J \cup J'$ so the same argument works. 

	Thus, $\ev_{x,J,J'}$ is surjective, which implies that
	\[\codim_B(\ker \ev_{x,J,J'})=2\] and, since, by definition, $\ker \ev_{x,J,J'} = I_{J,J'} \cap (\{x\}\times A)$ it follows that 
	\[\codim_{X \times B}(I_{J,J'})=2.\] Considering the projection $p_B: X \times B \to B$, it follows that \[\codim_B (p_B(I_{J,J'})) \ge 1.\] Its complement then contains an open dense set $V_{J,J'}$, whose elements are tuples $\lambda \in B$ such that $S_J$ and $S_{J'}$ have no zeros in common. Finally, the finite intersection $V = \bigcap_{J,J'}V_{J,J'}$ is also open and dense, so the set of $\lambda \in B$ such that no two of the $S_J$ have a common zero is generic.

	Hence it is indeed possible to find a $\lambda \in B$ satisfying both conditions (A) and (B), as we wanted.
	
	To show that condition number $2.$ is generic, the goal now is to show that the space of sections $\lambda \in B$ such that $S_{n,i}(\lambda)$ and $S_{n,j}(\lambda)$ have no common zeros is generic. Such a zero would have to be a common zero of $S_{J}(\lambda)$ and $S_{J'}(\lambda)$, with $|J|=i$ and $|J'|=j$. We can then use the same argument for claim (B) of item $1.$, now allowing the sizes of $J$ and $J'$ to be different---we can assume $i<j\le n/2$.
	
	As for item $3.$, let $c \in X$ be such that $\lambda_k(c)=\lambda_l(c)$, for some distinct $k$ and $l$. Then there exist two distinct subsets $J$, $J' \subseteq \{1,\ldots,n\}$ of size $i$, satisfying $S_J(\lambda)(c)=S_{J'}(\lambda)(c)=0$, which implies that $c$ is a zero of order at least two of $S_{n,i}(\lambda)=R_{n,i}(a)$. But we have already seen that for generic $a$ such a zero cannot exist.
	

	Finally, for item $4.$, if $i<n/2$, notice that if $\lambda \in B$ is such that $S_{n,i}(\lambda)$ has only simple zeros then at each zero $c$ there can be only one subset $J$ of size $i$ such that $S_J(\lambda)(c)=0$, which shows the result (if $i=n/2$, there is a unique pair of subsets $J,J'$ such that $S_J(c)=S_{J'}(c)=0$). \qedhere

\end{proof}


We then have the following bounds for the number of intersection points by multiplying the choices for each divisor of $\Phi_i$, the number of $n$-th roots of a line bundle and the number of eigenvalue choices over each zero of every $\Phi_i$, which is $1$ except at $i = n/2$, when it becomes $2$.

\begin{cor}\label{cor:bounds}
	Let $F_\alpha$ be a fixed point component of type $(1,\ldots,1)$ with $\deg \Phi_i = m_i$. Then, for $a \in \mathcal A$ in the conditions of Proposition \ref{prop:generic}:
	\begin{itemize}
		\item the number of points in $F_\alpha(a)$ (defined in \ref{admissible}) is bounded above by
		\[n^{2g}\prod_{i=1}^{\frac{n-1}{2}}\binom{\binom{n}{i}(2g-2)}{m_i,m_{n-i}}\]
			 for $n$ odd, and by  \[n^{2g}\binom{\binom{n}{n/2}(g-1)}{m_{n/2}}\prod_{i=1}^{\frac{n}{2}-1}\binom{\binom{n}{i}(2g-2)}{m_i,m_{n-i}}\]
		for $n$ even. Furthermore, these are all very stable fixed points;
		
		\item for $\mathcal E \in F_\alpha(a)$, we have
		\[|\mathcal L_{\mathcal E} \cap h^{-1}(a)| \le 1, \text{ if $n$ odd}\] and
		\[|\mathcal L_{\mathcal E} \cap h^{-1}(a)| \le 2^{m_{n/2}}, \text{ if $n$ even.}\]
	\end{itemize}

\end{cor}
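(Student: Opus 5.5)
The proof will run the rank $3$ and $4$ arguments in general rank, replacing the divisibility conditions \eqref{divisor_3} and \eqref{divisor_4} by the inequalities \eqref{ineq:divisor}. The first bullet concerns the admissible fixed points. Let $\mathcal E\in F_\alpha(a)$ and take a point $(E,\Phi)\in\mathcal L_{\mathcal E}\cap h^{-1}(a)$. By Proposition \ref{zv_simp} (equivalently the slice form \eqref{zv_slice}), at a zero $c$ of $\Phi_i$ the subspace $(E_i)_c$ is $\Phi_c$-invariant. The zero velocity condition makes the diagonal blocks of $\Phi$ vanish, so $\Phi_c|_{(E_i)_c}$ is block upper triangular with zero diagonal blocks, hence traceless. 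Since $\Phi_c$ is traceless, so is the induced map on the quotient. The characteristic polynomial therefore factors as in \eqref{eq:traceless}, and Proposition \ref{resultant} together with Lemma \ref{lem:homo} gives $\div\Phi_i\le\div R_{n,i}(a)$.

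For generic $a$, Proposition \ref{prop:generic}(1) says that $\div R_{n,i}(a)$ is reduced. By Proposition \ref{R_degree} it has degree $d(n,i)(2g-2)$, which is $\binom{n}{i}(2g-2)$ for $i\neq n/2$ and $\binom{n}{n/2}(g-1)$ for $i=n/2$. By \eqref{R_symmetry}, $\Phi_i$ and $\Phi_{n-i}$ take their divisors from the same divisor $\div R_{n,i}(a)$. They must be disjoint subdivisors: a common zero $c$ would make $\delta_+$ non-reduced at $c$, and then the roots over $c$ would admit two different partitions, one into blocks of size $i$ and one into blocks of size $n-i$. That contradicts the uniqueness in Proposition \ref{prop:generic}(4), or would force a double zero of $R_{n,i}(a)$. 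The pair $(\delta_i,\delta_{n-i})$ is therefore counted by the multinomial coefficient $\binom{\binom{n}{i}(2g-2)}{m_i,m_{n-i}}$. For $i=n/2$ the count is $\binom{\binom{n}{n/2}(g-1)}{m_{n/2}}$. Divisors with different $i$ and $j\notin\{i,n-i\}$ are disjoint by Proposition \ref{prop:generic}(2). Consequently $\delta_+$ is reduced, and $\mathcal E$ is very stable by Theorem \ref{HH_vs}. Once the divisors are fixed, the $n^{2g}$ choices of $n$-th root from Subsection \ref{4_1} give the total bound.

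The second bullet bounds the points lying over each admissible fixed point. Since $\mathcal L_{\mathcal E}\cap h^{-1}(a)\subset W^+_{\mathcal E}\cap h^{-1}(a)$ and $\mathcal E$ is very stable, Proposition \ref{HH_count} applies; its ramification hypothesis is Proposition \ref{prop:generic}(3). A point is determined by a choice, over each zero $c$ of each $\Phi_i$, of the $n-i$ preimages that are not eigenvalues of $\Phi_c|_{(E_i)_c}$. Equivalently it is determined by which $i$ eigenvalues form the spectrum of $\Phi_c|_{(E_i)_c}$, and that block must sum to zero by the trace computation above. By Proposition \ref{prop:generic}(4), for $i\neq n/2$ there is a unique partition into blocks of sizes $i$ and $n-i$ each summing to zero, so the block of size $i$ is forced. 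For $i=n/2$ there are two blocks of equal size and either may play the role of $(E_{n/2})_c$, giving two choices. Multiplying over zeros yields the bounds $1$ and $2^{m_{n/2}}$.

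The step I expect to need the most care is the disjointness of $\delta_i$ and $\delta_{n-i}$, which is what produces the multinomial coefficient rather than a product of binomials. I would handle it with the uniqueness in Proposition \ref{prop:generic}(4), arguing that a simultaneous zero would impose two incompatible invariant-subspace conditions at $c$, namely invariant subspaces of dimensions $i$ and $n-i$ with traceless restriction. If that argument fails, the fallback is to show that such a configuration forces a repeated root of $S_{n,i}$, contradicting Proposition \ref{prop:generic}(1).
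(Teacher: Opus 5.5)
Your argument follows the paper's proof. The trace condition gives \eqref{ineq:divisor}. You distribute the zeros of $R_{n,i}(a)$ between $\Phi_i$ and $\Phi_{n-i}$, multiply by $n^{2g}$, and get very stability from Theorem \ref{HH_vs}. You then apply Proposition \ref{HH_count} with the unique zero-sum partition of Proposition \ref{prop:generic}(4), which fixes the size-$i$ block when $i\neq n/2$ and leaves two choices when $i=n/2$.

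You are right that disjointness of $\delta_i$ and $\delta_{n-i}$ is needed. Both the multinomial count and the reducedness of $\delta_+$ use it, and Proposition \ref{prop:generic}(2) explicitly excludes $j\in\{i,n-i\}$; the paper passes over this silently. However, your stated reason is not enough. Two $\Phi_c$-invariant subspaces of dimensions $i$ and $n-i$ with traceless restriction are compatible with a unique partition when they are complementary sums of eigenlines.

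What makes the argument work is that both subspaces belong to the same Simpson flag. For $i<n/2$ you have $(E_i)_c\subset(E_{n-i})_c$. The eigenvalues are distinct by Proposition \ref{prop:generic}(3), so the spectra satisfy $A\subset B$. Then $A$ and $B^c$ are two disjoint, hence distinct, zero-sum $i$-subsets. This contradicts (4), and equivalently gives a double zero of $R_{n,i}(a)$, since $A$ and $B^c$ are not complementary.

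A minor slip: $\Phi_c|_{(E_i)_c}$ is not block upper triangular, since it carries the subdiagonal $\Phi_1(c),\dots,\Phi_{i-1}(c)$. Read literally, your description would make it nilpotent. Only the vanishing of its diagonal blocks is needed for tracelessness.
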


\begin{proof}
	 Given such $a \in \mathcal A$, by \eqref{ineq:divisor}, the number of possible $\Phi_i$ is obtained by counting the number of ways to distribute the zeros of each section $R_{n,i}(a)$ into $\Phi_i$ and $\Phi_{n-i}$ (since $R_{n,i}=R_{n,n-i}$), if $i<n/2$. If $i = n/2$, we distribute the zeros of $R_{n,n/2}$ over $\Phi_{n/2}$ only. The conditions of Proposition \ref{prop:generic} guarantee that the number of ways to obtain such a distribution is, for $n$ odd,
	\[\prod_{i=1}^{\frac{n-1}{2}}\binom{\binom{n}{i}(2g-2)}{m_i,m_{n-i}}\]
	and, for $n$ even \[\binom{\binom{n}{n/2}(g-1)}{m_{n/2}}\prod_{i=1}^{\frac{n}{2}-1}\binom{\binom{n}{i}(2g-2)}{m_i,m_{n-i}}.\] Each such choice of divisors gives rise to $n^{2g}$ fixed points.
	
	Moreover, Proposition \ref{prop:generic} also guarantees that $\sum \div \Phi_i$ is reduced, which means that every fixed point in $F_\alpha(a)$ is very stable, by Theorem \ref{HH_vs}.
	
	Finally, by the third part of Proposition \ref{prop:generic}, at each zero of each $\Phi_i$ there is a unique partition of its eigenvalues into 2 blocks of sizes $n-i$ and $i$ and so, according to \ref{HH_count}, we must pick the one of size $(n-i)$, which means we only get one choice if $i \ne n/2$ and $2$ choices if $i=n/2$. This gives us the stated bounds on the number of points in $\mathcal L_{\mathcal E} \cap h^{-1}(a)$.
\end{proof}



\begin{theorem}
\label{thm:B}
\ThmB
\end{theorem}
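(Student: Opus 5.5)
Corollary \ref{cor:bounds} already gives the upper bound: the number of candidate fixed points times the number of admissible eigenvalue choices is exactly the product in the statement. So it remains to show that every candidate is realised. Concretely, I fix $a$ as in Proposition \ref{prop:generic} and a candidate $\mathcal E\in F_\alpha$, that is, $\div\Phi_i\le \div R_{n,i}(a)$, with the zeros of each $R_{n,i}(a)$ split among $\Phi_i,\Phi_{n-i}$ (or all assigned to $\Phi_{n/2}$), together with one of the $n^{2g}$ roots $L_1$. I also fix at every zero $c$ of $\Phi_i$ the forced choice of $n-i$ points of $\pi_a^{-1}(c)$: the unique block of Proposition \ref{prop:generic}(4) summing to zero, or one of the two blocks when $i=n/2$. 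I must show that the corresponding point of $W^+_{\mathcal E}\cap h^{-1}(a)$ lies in $\mathcal L_{\mathcal E}$.

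\textbf{Existence of the point in the upward flow.} Since $\delta_+$ is reduced, $\mathcal E$ is very stable, and since the zeros of $R_{n,i}(a)$ avoid the ramification divisor, Proposition \ref{HH_count}(2) applies. It produces a unique $(E,\Phi)\in W^+_{\mathcal E}\cap h^{-1}(a)$ with spectral line bundle $\pi_a^*(E_1)(\sum_i\Gamma_i)$, where $\Gamma_i$ consists of the chosen points over the zeros of $\Phi_i$.

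\textbf{Zero velocity, via Hecke transforms.} I would mirror the proof of Proposition \ref{HH_count} in \cite{HH}, which builds this point by successive Hecke transforms starting from a Hitchin section point. Start from $(E_L,\Phi_a)\in W^+_{\mathcal E_L}$, which lies in $\mathcal L_{\mathcal E_L}$ by Example \ref{hitchin_sec}, where $\mathcal E_L(D)$ twisted appropriately has all $\Phi_i=1$. For each zero $c$ of each $\Phi_i$ in turn, perform a Hecke transform at the $\Phi_c$-invariant subspace $V\subset E_c$ of dimension $n-i$ spanned by the eigenvectors of the chosen eigenvalues. Each step inserts a zero $c$ into the $i$-th map of the chain, by Proposition \ref{HH_hecke}. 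Because the chosen eigenvalues sum to zero, $\tr\Phi_c|_V=0$. Also, $c$ is not a zero of the current $\Phi_1\cdots\Phi_{n-1}$, since the zeros are distinct by Proposition \ref{prop:generic}(1),(2). So Proposition \ref{zv_hecke} keeps us inside the zero velocity locus at every step.

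After all steps, and after twisting by a line bundle of degree zero (or by $\mathcal O(-\text{divisor})$ to restore trivial determinant), we land in $\mathcal L_{\mathcal E'}\cap h^{-1}(a)$. Here $\mathcal E'$ has the prescribed divisors, and the root of unity data is adjusted by the choice of $L$. Twisting commutes with the $\C^*$-action and the Simpson filtration, so it preserves zero velocity. By the uniqueness in Proposition \ref{HH_count}(3), the resulting point is the one attached to the $\Gamma_i$, and the distinct choices give distinct points.

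\textbf{Main obstacle.} The delicate step is the bookkeeping: making sure that the successive Hecke transforms, together with the twist and the choice among the $n^{2g}$ roots, reach every $\mathcal E\in F_\alpha(a)$ and every eigenvalue choice exactly once. This requires matching the eigenvalue data at $c$ with the $n-i$-subset $\Gamma_i$ in the \cite{HH} convention, namely the complement of the eigenvalues of $\Phi_c|_{(E_i)_c}$. It also requires checking that traceless $V$ corresponds precisely to the zero-sum block. If the twist causes trouble for $\SL$, I would instead twist the starting Hitchin section point by a suitable line bundle before the Hecke transforms and use $\GL$ briefly, which does not affect the filtration conditions of Proposition \ref{zv_simp}.
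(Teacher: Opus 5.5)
Your proposal is correct and follows the paper's proof: the upper bound comes from Corollary \ref{cor:bounds}, and every candidate is realised by induction on $\deg\delta_+$. That induction starts from a Hitchin section point and applies Proposition \ref{zv_hecke} at the span of the eigenvectors of the zero-sum block. For the determinant, the paper does what your fallback does, starting from a $\GL(n,\C)$ Hitchin section $L\oplus LK^{-1}\oplus\cdots\oplus LK^{1-n}$ with $L$ determined by the target fixed point; your first suggestion of a degree-zero twist at the end cannot work, because the Hecke steps lower the degree by $\sum_i i\,m_i$, so you need an $n$-th root of $\mathcal O(\sum_i i\,\delta_i)$, which exists by the $\SL$ constraint on $F_\alpha$.
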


\begin{proof}
It remains to see that, given each choice of fixed point $\mathcal E$ with $\deg \Phi_i = m_i$ and eigenvalues over the zeros of $\Phi_i$ discussed in Corollary \ref{cor:bounds}, we can produce a point in $\mathcal L_{\mathcal E} \cap h^{-1}(a)$.

	As in the proof of Proposition \ref{HH_count}, we proceed by induction on the total degree $\deg \delta_+$. If it is zero, then we are in the Hitchin section, for which $\mathcal L_{\mathcal E} = W_{\mathcal E}^+$ and so the existence of the intersection point is clear. 
	
	Otherwise, assume we have produced a point $(E,\Phi) \in \mathcal L_{\mathcal E} \cap h^{-1}(a)$, where $\mathcal E$ satisfies $\deg \delta_+=(\sum_i m_i) - 1$ and we need to add a zero at $\Phi_i$, with $1\le i \le n-1$. We pick a point $c \in \div R_{n,i}(a)$, where the eigenvalues of $\Phi_c$ partition into two blocks each summing to zero, one of size $i$ and another of size $n-i$, according to Proposition \ref{prop:generic}. Letting $V < E_c$ be the span of the $n-i$ eigenvectors corresponding to the second block of eigenvalues, we have $\tr(\Phi_c|_V)=0$ so, by Proposition \ref{zv_hecke}, the Hecke transformed Higgs bundle $\mathcal H_V(E,\Phi)$ belongs to the zero velocity locus $\mathcal L_{\mathcal E'}$, where $\mathcal E'$ is a fixed point whose associated divisor $\delta_+'$ satisfies $\deg \delta'_+ = \sum_i m_i$.

Finally, we check with which Hitchin section we must start the algorithm. We seek a line bundle $L$ such that performing the Hecke transformations to the point in $h^{-1}(a)$ flowing to
\begin{equation*}
	\begin{tikzcd}
		L \arrow[r,"1"] & LK^{-1} \arrow[r,"1"] & \cdots \arrow[r,"1"] & LK^{1-n} 
	\end{tikzcd}
\end{equation*}
gives us a point flowing to
\begin{equation*}
	\begin{tikzcd}
		L_1 \arrow[r,"\Phi_1"] & L_2 \arrow[r,"\Phi_2"] & \cdots \arrow[r,"\Phi_{n-1}"] & L_{n} 
	\end{tikzcd}
\end{equation*}
with trivial determinant, i.e. $L_1\cdots L_n \cong \mathcal O$. After performing the Hecke transformations, we flow to the point
\[L(-\div \Phi_1 - \cdots -\div \Phi_{n-1}) \oplus \cdots \oplus LK^{-j}(-\div \Phi_{j+1} - \cdots -\div \Phi_{n-1}) \oplus \cdots \oplus LK^{1-n}\] whose determinant equals $L^n K^{-(n-1)n/2}(L_n^{-n})$.
So we can take $L=K^{(n-1)/2}L_n$.
\end{proof}

\begin{rmk}
	After fixing the divisors of the $\Phi_i$, the fact that we are in the $\SL(n,\C)$ moduli space limits the choices of fixed points to a finite number. If instead we allow ourselves to move inside the full $\GL(n,\C)$ moduli space, we get a full copy of $\Jac X$ for each choice of divisors, which then lifts to $|\mathcal L_\alpha \cap h^{-1}(a)$ copies inside $\mathcal L_\alpha$ when intersecting with $h^{-1}(a)$. 
\end{rmk}

\section{A proposal for the mirror} \label{mirror}

The Lagrangian $\mathcal L_\alpha$ we have constructed together with its structure sheaf is a brane of type BAA with respect to the hyperkähler structure of the moduli space $\mathcal M$. Thus according to \cite{KW}, it should correspond to a BBB brane on the mirror moduli space $\mathcal M^\vee$ -- which will be the moduli of $\PGL(n,\C)$-Higgs bundles---i.e., a hyperholomorphic bundle over a hyperkähler subvariety. Moreover, the correspondence should be realized as a Fourier-Mukai transform along each fiber of the Hitchin map.


In this section, after recalling the machinery of Fourier-Mukai transforms, we review the construction of the mirror of the upward flow of a very stable type $(1,\ldots 1)$ fixed point done in \cite[Section 6.2]{HH}. We then see what modifications will be needed to obtain a similar result for our Lagrangian $\mathcal{L}_\alpha$.
\subsection{The Fourier-Mukai transform}

In this section, we recall the main concepts and results needed to define the Fourier-Mukai transform of an abelian variety. 


Let $A$ be an abelian variety and $A^\vee = \Pic^0(A)$ its dual. 
The Poincaré bundle $\mathcal P$ over the product $A \times A^\vee$ is the unique line bundle satisfying 
\[\mathcal P|_{A \times \{L\}} \cong L,\qquad \mathcal P|_{\{0 \} \times J^\vee} \cong \mathcal O_{A^\vee}.\] Moreover, given $x,y \in A$, if we set $\mathcal P_x \coloneqq \mathcal P|_{\{x\} \times A^\vee}$, the biextension property of $\mathcal P$ (see, e.g., \cite[Section 10.3]{Polishchuk_2003}) implies that 
\begin{equation}\label{biext}
	\mathcal P_{x+y} \cong \mathcal P_x \otimes \mathcal P_y.
\end{equation}


Letting $D^b_{coh}(A)$ denote the derived category of bounded complexes of coherent sheaves on $A$, the Fourier-Mukai transform  $S:D^b_{coh}(A) \to D^b_{coh}(A^\vee)$ is defined by \cite{mukai81}
\[S(\mathcal F^\bullet) \coloneqq \pi_{2,*}(\pi_1^*\mathcal F^\bullet \otimes^L \mathcal P),\] where $\pi_1,\pi_2$ are the projections of $A \times A^\vee$ onto its factors.

We now specialize to the case of the Jacobian of a Riemann surface $X$, which we denote by $J\coloneqq \Jac(X)$, parametrizing the degree 0 line bundles over $X$. It is an abelian variety when equipped with the tensor product of line bundles. 
Given a point $c_0 \in X$, we have the \emph{Abel-Jacobi map} $\alpha_{c_0}:X \to J$ defined by $c \mapsto \mathcal O(c-c_0).$ 
Its pullback $\alpha_{c_0}^* : J^\vee \to J$ is an isomorphism, independent of the base point $c_0$, which we will use to identify $J$ and $J^\vee$. 


Given $L \in J$, we have
\begin{equation}\label{LM_point}
	S(\mathcal O_L) = \mathcal P_L.
\end{equation}
Moreover, given $L_1,L_2 \in J$, equation \eqref{biext} becomes
\begin{equation}\label{P_prod}
\mathcal P_{L_1 \otimes L_2}\cong \mathcal P_{L_1}\otimes \mathcal P_{L_2}.
\end{equation}
Letting $D$ be a divisor on $X$ such that $L=\mathcal O (D)$, we write $\mathcal P_D$ for $\mathcal P_{L}$ and \eqref{P_prod} becomes
\begin{equation}\label{FM_sum}
	\mathcal P_{D_1 + D_2} = \mathcal P_{D_1} \otimes \mathcal P_{D_2}.
\end{equation}

We can pullback the Poincaré bundle $\mathcal P$ over $J\times J^\vee$ using the map
$\alpha_{c_0} \times (\alpha_{c_0}^*)^{-1}$ to obtain the universal line bundle $\mathbb L$ over $X \times J$ normalized at $c_0$, i.e., for all $L \in J$,
\begin{equation}\label{universal_line}
	\mathbb L|_{X \times \{L\}} \cong L, \qquad \mathbb L|_{\{c_0\}\times J} \cong \mathcal O_J.
\end{equation}


\subsection{Construction via universal Higgs bundles}


In \cite[Section 6.2.1]{HH} a twisted universal Higgs bundle $(\mathbb E,\mathbf{\Phi})$ is built over $\mathcal M^s \times X$ (recall that $\mathcal M^s$ is the smooth locus of $\mathcal M$, represented by the stable points). Over an étale cover $(U_i \to \mathcal M^s)$ we have universal Higgs bundles $\mathbb E_{U_i}$ together with isomorphisms $\phi_{ij}: \mathbb E_{U_i} \to \mathbb E_{U_j}$ over $U_i \times_{\mathcal M^s}U_j$. These satisfy, for each $i,j,k$, \[\phi_{jk}\phi_{ij} = \theta_{ijk}\phi_{ik},\] where $\theta_{ijk}$ is an $n$-th root of unity. The collection $\theta$ of the $\theta_{ijk}$ is in fact a Čech 2-cocycle defining a cohomology class $[\theta] \in  H^2(\mathcal M^s,\mathcal O^*_{\mathcal M^s})$, i.e., a gerbe. Thus $\mathbb E$ is only a vector bundle in the twisted sense.

 Recall that, by \eqref{divisors}, a very stable $\C^*$-fixed point $\mathcal E$ of type $(1,\ldots,1)$ corresponds to a vector of divisors on $X$,
\[\delta = (\delta_0;\delta_1,\ldots,\delta_{n-1}) \in \Jac^l(X) \times X^{[m_1]} \times \cdots \times X^{[m_{n-1}]},\] 
where $\sum_{i>0} \delta_i$ is reduced. We write $\delta_i = \sum_{j=1}^{m_i} c_{ij}$, where $c_{ij} \in X$ if $i>0$ but if $i=0$, either $c_{0j} \in X$ or $-c_{0j} \in X$. We also set $L=\mathcal O(\delta_0)$. Associated to $\mathcal E$ (or equivalently, to $\delta$), we consider
\begin{equation}\label{lambda_E}
	\Lambda_{\mathcal E} \coloneqq \bigotimes_{i=0}^{n-1} \bigotimes_{j=1}^{m_i} \Lambda^{n-1}(\mathbb E_{c_{ij}}), 
\end{equation} where $\mathbb E_{c_{ij}} \coloneqq \mathbb E|_{\mathcal M^s \times \{c_{ij}\}}$ if $c_{ij}\in X$ and $\mathbb E_{c_{0j}} \coloneqq \mathbb E^*|_{\mathcal M^s \times \{-c_{0j}\}}$
if $-c_{0j} \in X$.

Since $\mathbb E$ is twisted by a gerbe $\theta$, $\Lambda_{\mathcal{E}}$ is naturally a $\theta^D$-twisted bundle, with $D=nl +\sum_i (n-i)m_i$. We have
\begin{equation}\label{d=D}
	\deg \mathcal E = D - n(n-1)(g-1),
\end{equation}
so $D$ is a multiple of $n$, since $\mathcal E$ has degree $0$, being a point of $\mathcal M$. Since $\theta$ takes values in the $n$-th roots of unity, we conclude that $\Lambda_{\mathcal{E}}$ is a well defined vector bundle on $\mathcal M^s$.


Finally, in \cite[Section 6.2.2]{HH}, the relative Fourier-Mukai of $\mathcal O_{W_{\mathcal E}^+}$ is computed over the open subset $\mathcal A^\#$ of the base $\mathcal A$ of points satisfying the conditions of Proposition \ref{HH_count}.

\begin{theorem}[{\cite[Theorem 6.3]{HH}}] \label{HH_mirror}
	The relative Fourier-Mukai transform over $\mathcal A^\#$ satisfies
	\[S\left(\left.\mathcal O_{W_{\mathcal E}^+}\right|_{\mathcal M^\#}\right) = \Lambda_{\mathcal E}|_{\mathcal M^\#}.\] Here $\mathcal M^\# = h^{-1}(\mathcal A^\#)$. 
\end{theorem}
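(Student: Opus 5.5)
The proof will be fiberwise over $\mathcal A^\#$, followed by a gluing argument in families. Fix $a\in\mathcal A^\#$, so that $X_a$ is smooth and $\delta_+$ avoids the ramification divisor of $\pi_a$. By the BNR correspondence, $h^{-1}(a)$ is a torsor for an abelian variety $A_a$ (a Prym inside $\Jac(X_a)$ in the $\SL(n,\C)$ case). Its dual $A_a^\vee$ is the corresponding Hitchin fiber $h_{\PGL}^{-1}(a)$ of the mirror. By Proposition \ref{HH_count}, the scheme $W^+_{\mathcal E}\cap h^{-1}(a)$ is a finite set of points. These are the spectral line bundles $U=\pi_a^*(E_1)(\sum_i\Gamma_i)$, where $\Gamma_i$ runs over the reduced divisors supported on $(n-i)$-element subsets of $\pi_a^{-1}(c)$, for every zero $c$ of $\Phi_i$.

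The first step is to check that this intersection is transverse. I would do this by comparing the count $\prod_i\binom{n}{i}^{m_i}$ with the intersection number of the Lagrangian $W^+_{\mathcal E}$ with a fiber, or via a tangent computation analogous to Proposition \ref{rk2_der}. Transversality makes $\mathcal O_{W^+_{\mathcal E}}|_{h^{-1}(a)}$ the reduced structure sheaf of these points, with no derived corrections. By \eqref{LM_point}, its Fourier--Mukai transform is then $\bigoplus_U\mathcal P_U$.

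Next, I would use the biextension property \eqref{FM_sum} to factor each summand as
\[
\mathcal P_U\cong\mathcal P_{\pi_a^*E_1}\otimes\bigotimes_{i,j}\mathcal P_{\Gamma_{ij}},
\]
where $\Gamma_{ij}$ denotes the part of $\Gamma_i$ lying over $c_{ij}$. Summing over the independent choices at each point gives
\[
\bigotimes_{i,j}\ \bigoplus_{\substack{S\subset\pi_a^{-1}(c_{ij})\\ |S|=n-i}}\ \bigotimes_{\tilde c\in S}\mathcal P_{\tilde c}
\;=\;\bigotimes_{i,j}\Lambda^{n-i}\Bigl(\bigoplus_{\tilde c\in\pi_a^{-1}(c_{ij})}\mathcal P_{\tilde c}\Bigr).
\]
The key identification is that $\bigoplus_{\tilde c\in\pi_a^{-1}(c)}\mathcal P_{\tilde c}$ is the restriction of the universal Higgs bundle $\mathbb E_c$ to the dual fiber. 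This is just BNR in families: $\mathbb E=(\pi_a\times\id)_*$ of the universal spectral sheaf, and $\pi_a$ is unramified at $c$. The factor $\mathcal P_{\pi_a^*E_1}$ is handled the same way: writing $E_1=\mathcal O(\delta_0)$ and decomposing $\delta_0$ into points $\pm c_{0j}$, each $\mathcal P_{\pi_a^*\mathcal O(c)}$ is the top exterior power of $\mathbb E_c$ (or of its dual). This recovers the exterior-power factors of \eqref{lambda_E}. I read the exponent there as being $n-i$ for the factor indexed by $\delta_i$ (and a determinant for $\delta_0$); this is forced by the count $\binom{n}{i}^{m_i}$.

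Finally, I would globalize over $\mathcal A^\#$. The relative Fourier--Mukai transform commutes with base change to fibers because $h$ is flat over $\mathcal A^\#$ and the sheaf involved is finite over the base. The fiberwise isomorphisms come from canonical maps: BNR pushforward, the biextension isomorphisms, and the exterior-algebra decomposition of a direct sum. This should let them glue to an isomorphism with $\Lambda_{\mathcal E}|_{\mathcal M^\#}$, up to a line bundle pulled back from the base, which can be absorbed by the normalization of the Poincaré and universal bundles.

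The main obstacle is this last gluing step: matching the $\theta$-twisting of $\mathbb E$ with the normalization of the relative Poincaré sheaf. Here I would use \eqref{d=D}: the total twist is $\theta^D$ with $n\mid D$, and $\theta$ is $n$-torsion, so the gerbe cancels. It remains to check that the residual ambiguity is only a line bundle from $\mathcal A^\#$, which can be trivialized after normalizing at a base point.
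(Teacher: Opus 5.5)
Your proposal is correct and is essentially the argument of \cite{HH} that the paper cites and whose key facts it records. That argument runs: fiberwise BNR reduction to skyscrapers at the points $\pi_a^*E_1(\sum_i\Gamma_i)$, biextension factorization of $\mathcal P_U$, the identification $\bigoplus_{\tilde c\in\pi_a^{-1}(c)}\widetilde{\mathcal P}_{\tilde c-\tilde c_0}\cong\mathbb E_c$ coming from $\pi_*\widetilde{\mathbb L}\cong\mathbb E$, and the resulting exterior powers $\Lambda^{n-i}$. Your reading of the exponent in \eqref{lambda_E} as $n-i$ is the intended one, as \eqref{E_L} and the formula for $D$ confirm.

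The one step where the paper's route is more robust is the $\delta_0$-factor. The paper obtains it as $\Ndet^*\mathcal P_{L(-lc_0)}$, i.e.\ via the norm map, which is independent of the choice of $\delta_0$ and still works when some $c_{0j}$ is a branch point of $\pi_a$. This case is not excluded by $\mathcal A^\#$, which only constrains $\delta_+$, and your pointwise splitting of $\mathbb E_{c_{0j}}$ needs $\pi_a$ to be unramified there. Note also that the whole computation lives on the self-dual Jacobian fibers of the $\GL(n,\C)$ setting of \cite{HH}, not on Prym/$\PGL$ fibers as your first paragraph suggests.
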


Denote by $\mathcal A^0$ the subset of $\mathcal A^\#$ whose points satisfy the conditions of Proposition \ref{prop:generic}. For $a \in \mathcal A^0$, the structure sheaf of the intersection $\mathcal L_\alpha \cap h^{-1}(a)$ will be a skyscraper sheaf supported on a finite number of points in $\Prym_a$, the Prym variety associated to the spectral cover $\pi:X_a \to X$. It is defined as the kernel of the norm map $\Nm_\pi: \Jac(X_a) \to \Jac(X)$ which sends $\mathcal O(\sum_i n_i \tilde c_i)$ to $\mathcal O(\sum_i n_i \pi(\tilde c_i))$.

We can consider the structure sheaf of the intersection $\mathcal L_\alpha \cap h^{-1}(a)$ as a skyscraper sheaf on the entire Jacobian $J_a \coloneqq \Jac(X_a)$, supported on the same points, and compute its dual using the Fourier-Mukai transform associated to $J_a$. In this way, we obtain a vector bundle over each $J_a^\vee \cong J_a$ which we then show that it can be extended to a vector bundle $\Lambda_\alpha^0$ over the subset of $\mathcal M (\GL(n,\C))$ defined as $\mathcal M^0_{\GL} \coloneqq h_{\GL}^{-1}(\{0\} \times \mathcal A^0)$. This will in turn descend to a vector bundle $\overline \Lambda_\alpha^0$ on $\mathcal M^0_{\PGL} \coloneqq h_{\PGL}^{-1}(\mathcal A^0) \subseteq \mathcal M(\PGL(n,\C))$. The strategy can be summarized in the following diagram
\[
\begin{tikzcd}
	\mathcal L_\alpha \cap \mathcal M^0 \arrow[d,hook] \arrow[r,equal] & \mathcal L_\alpha \cap \mathcal M^0 \arrow[d,hook] & & \Lambda^0_\alpha \arrow[d] \arrow[r] & \overline \Lambda_\alpha^0 \arrow[d] \\ 
	\mathcal M^0 \arrow[rrdd,"h",swap] \arrow[r,hook]& \mathcal M^0_{\GL} \arrow[rd,"h_{\GL}",swap] & & \mathcal M^0_{\GL} \arrow[ld,"h_{\GL}"] \arrow[r,two heads] & \mathcal M^0_{\PGL}  \arrow[lldd,"h_{\PGL}"] \\ 
	& & \{0\} \times \mathcal A^0 \arrow[d, equal, "\sim", sloped] & & \\ 
		& &  \mathcal A^0. & &
\end{tikzcd}
\]

Let $F_\alpha$ be a $\C^*$-fixed point component of type $(1,\ldots,1)$, let $a \in \mathcal A^0$ and let $F_\alpha(a)$ be the corresponding set of admissible fixed points. Each $\mathcal E \in F_\alpha(a)$ has an associated divisor vector $(\delta_0;\delta_1,\ldots,\delta_n)$, with $\delta_i = \sum_i c_{ij}$, such that $\delta_+$ is reduced. For $i > 0$, over each $c_{ij}$ there are $n$ distinct preimages of the spectral cover $\pi:X_a \to X$. For $i\ne n/2$, let $\Sigma_{ij}$ be the unique subset of size $(n-i)$ of those preimages whose sum equals $0$ and if $i=n/2$, denote the two size $(n-i)$ subsets with the same property by $\Sigma_{\frac{n}{2}j}^0$ and $\Sigma_{\frac{n}{2}j}^1$. 

The structure sheaf of the intersection $\mathcal L_\alpha \cap h^{-1}(a)$ can be written as
\[\mathcal O_{\mathcal L_\alpha \cap h^{-1}(a)} = \bigoplus_{\mathcal E \in F_\alpha(a)} \mathcal O_{\mathcal L_{\mathcal E} \cap h^{-1}(a)}.\]   By Corollary \ref{cor:bounds} and Theorem \ref{thm:B}, if $n$ is odd, the intersection $\mathcal L_{\mathcal E} \cap h^{-1}(a)$ consists of a single point, namely,
\begin{equation}\label{n_odd}
	\pi^*L\left(\sum_{i=1}^{n-1} \sum_{j=1}^{m_i} \sum_{\tilde c \in \Sigma_{ij}}\tilde c\right) \in h^{-1}(a) \cong J_a^d,
\end{equation}
where $d= n(n-1)(g-1)$. If $n$ is even, we have to distinguish the case $i=n/2$, which leads to $2^{m_{n/2}}$ points in $\mathcal L_{\mathcal E} \cap h^{-1}(a)$ of the form
\[\pi^*L\left(\sum_{\substack{i=1 \\ i\ne n/2}}^{n-1} \sum_{j=1}^{m_i} \sum_{\tilde c \in \Sigma_{ij}}\tilde c + \sum_{j=1}^{m_{n/2}}\sum_{\tilde c \in \Sigma_{\frac{n}{2}j}^{k_j}} \tilde c\right),\] one for each possible choice of $k_j \in \{0,1\}$. 


To perform the Fourier-Mukai transform of $\mathcal O_{\mathcal L_\alpha \cap h^{-1}(a)}$, we first choose a point $c_0 \in X$ and a lift $\widetilde{c_0}$ to the spectral curve $X_a$. We then identify $J_a^d$ with $J_a$ using the map $M \mapsto M(-d\tilde c_0)$. For $n$ odd, \eqref{n_odd} becomes, using \eqref{d=D}, 
\begin{equation*}
	\pi^*L(-nl\tilde c_0)\left(\sum_{i=1}^{n-1} \sum_{j=1}^{m_i} \sum_{\tilde c \in \Sigma_{ij}}(\tilde c-\tilde c_0)\right) \in  J_a.
\end{equation*}

Using \eqref{LM_point} and \eqref{FM_sum}, we obtain the Fourier-Mukai transform of $\mathcal O_{\mathcal L_\alpha \cap h^{-1}(a)}$,
\[S(\mathcal O_{\mathcal L_\alpha \cap h^{-1}(a)})=\bigoplus_{\mathcal E \in F_\alpha(a)} \widetilde{\mathcal P}_{\pi^* L(-nl\tilde c_0)} \bigotimes_{i=1}^{n-1} \bigotimes_{j=1}^{m_i}\Lambda^{n-i}
\Lp\bigoplus_{\tilde c \in \Sigma_{ij}}\widetilde{\mathcal P}_{\tilde c-\tilde c_0}\Rp.\]  

For $n$ even, we have to distinguish the $i=n/2$ case, so the Fourier-Mukai transform $S(\mathcal O_{\mathcal L_\alpha \cap h^{-1}(a)})$ takes the form
\[\bigoplus_{\mathcal E \in F_\alpha(a)} \widetilde{\mathcal P}_{\pi^* L(-nl\tilde c_0)} \bigotimes_{\substack{i=1 \\ i\ne \frac{n}{2}}}^{n-1} \bigotimes_{j=1}^{m_i}\Lambda^{n-i}\Lp\bigoplus_{\tilde c \in \Sigma_{ij}}\widetilde{\mathcal P}_{\tilde c-\tilde c_0}\Rp \bigotimes_{j=1}^{m_{\frac{n}{2}}} \Lp \Lambda^{\frac{n}{2}}\Lp\bigoplus_{\tilde c \in \Sigma^0_{\frac{n}{2}j}}\widetilde{\mathcal P}_{\tilde c-\tilde c_0}\Rp \oplus \Lambda^{\frac{n}{2}}\Lp\bigoplus_{\tilde c \in \Sigma_{\frac{n}{2}j}^1}\widetilde{\mathcal P}_{\tilde c-\tilde c_0}\Rp\Rp. \]

Our goal now is to find a vector bundle over $\mathcal M^0$ whose restriction to each fiber $h^{-1}(a)$ is given by $S(\mathcal O_{\mathcal L_\alpha \cap h^{-1}(a)})$. The way we will construct such a bundle will be by relating the expression for the Fourier-Mukai transform to data that can be determined canonically from the universal Higgs bundle $(\mathbb E,\mathbf \Phi)$.

From the proof of Theorem \ref{HH_mirror} in \cite{HH}, we have the following facts:
\begin{itemize}
	\item for $i\ge 1$, 
	\[\widetilde{\mathcal P}_{\tilde c-\tilde c_0} = \widetilde {\mathbb L}|_{J_a \times \{\tilde c\}},\] where $\widetilde {\mathbb L}$ is the universal line bundle on $J_a \times X_a$ normalized at $\tilde c_0$ as defined in \eqref{universal_line}. In this way we get a universal Higgs bundle on $h^{-1}(a) \times X$ given by $(\pi_* \widetilde{\mathbb L},\mathbf{\Phi}_{a})$, with $\mathbf{\Phi}_a \coloneqq \pi_*(x: \widetilde{\mathbb L} \to \widetilde{\mathbb L}\otimes \pi^*K)$, which satisfies $\pi_* \widetilde{\mathbb L} \cong \mathbb E$ over $h^{-1}(a)$;
	\item using the following normalized determinant map
	\[
	\begin{array}{crcl}
		\operatorname{Ndet}: & \mathcal M^0 & \longrightarrow & J \\
		& (E,\Phi) & \longmapsto & \det(E)\otimes K^{\binom{n}{2}},
	\end{array}
	\] over $h^{-1}(a)$ the following bundles are isomorphic
	\begin{equation} \label{E_L}
		\widetilde{\mathcal P}_{\pi^* L(-nl\tilde c_0)} \cong \Ndet^*\mathcal P_{L(-lc_0)} \cong \bigotimes_{j=1}^{m_0}\Lambda^n\mathbb E_{c_{0j}}.
	\end{equation} In particular, this last bundle only depends on $L$ and not on the choice of divisor $\delta_0$ such that $L=\mathcal O(\delta_0)$. Set $\mathbb E_L \coloneqq \bigotimes_{j=1}^{m_0}\Lambda^n\mathbb E_{c_{0j}}$.
\end{itemize}

Note that for each $i,j > 0$ we get an inclusion
\[\Lambda^{n-i}\left(\bigoplus_{\tilde c \in \Sigma_{ij}} \widetilde{\mathcal P}_{\tilde c-\tilde c_0} \right) \hookrightarrow \Lambda^{n-i}\left(\bigoplus_{\tilde c \in \pi^{-1}(c_{ij})} \widetilde{\mathcal P}_{\tilde c-\tilde c_0} \right) \cong \Lambda^{n-i}(\mathbb E_{c_{ij}}),\] so, over $h^{-1}(a)$, our proposed mirror is a sum, for each $\mathcal E \in F_\alpha(a)$, of a vector subbundle $\Lambda^0_{\mathcal E}(a)$ of $\Lambda_{\mathcal E}|_{h^{-1}(a)}$ in \eqref{lambda_E}, given by 

\begin{equation} \label{lambda_0}
	\Lambda^0_{\mathcal E}(a) \coloneqq \mathbb E_L \otimes \bigotimes_{i=1}^{n-1} \bigotimes_{j=1}^{m_i}\bigoplus_{\Sigma_{ij}}\Lambda^{n-i}\left(\bigoplus_{\tilde c \in \Sigma_{ij}}\widetilde{\mathcal P}_{\tilde c-\tilde c_0}\right),
\end{equation} where the first direct sum is over all subsets $\Sigma_{ij}$ of size $k$ of $\pi^{-1}(c_{ij})$ whose sum is zero.

The idea now is to be able to capture, at a point $c \in X$, the subspaces of $E_c$ where $\Phi_c$ restricts to a traceless endomorphism. Given an endomorphism $A:V \to V$ of a finite dimensional vector space $V$ and $k \le \dim V$, we consider the map on the exterior product $d\Lambda^k A: \Lambda^k V \to \Lambda^k V$ defined by 
\begin{equation}\label{ext}
	d\Lambda^k A (v_1 \wedge \ldots \wedge v_k)=\sum_{i=1}^k v_1 \wedge \ldots \wedge Av_i \wedge \ldots \wedge v_k.
\end{equation}

\begin{prop}\label{ext_product}
	Let $(\mathbb E,\mathbf \Phi)$ be the universal Higgs bundle over $X \times \mathcal M^0$. Let $a \in \mathcal A^0$ and $\mathcal E \in F_\alpha(a)$ with associated divisors $\delta_i = \sum_j c_{ij}$. Then, over $h^{-1}(a)$, for $k=1,\ldots,n-1$,
	\[ \ker d\Lambda^{k} \Phi_{c_{ij}} \cong \bigoplus_{\Sigma_{ij}}\Lambda^k \left(\bigoplus_{\tilde c \in \Sigma_{ij}} \widetilde{\mathcal P}_{\tilde c -\tilde c_0}  \right),\] where the direct sum is over all subsets $\Sigma_{ij}$ of size $k$ of the eigenvalues of $\Phi_{c_{ij}}$ whose sum is zero.
\end{prop}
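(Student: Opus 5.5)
The plan is to reduce the statement to linear algebra at the point $c = c_{ij}$, fiberwise over $h^{-1}(a)$, and then check that the resulting decomposition is compatible with the universal structure.

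\textbf{Step 1: eigenline decomposition.} Over $h^{-1}(a)$ we use the identification $\mathbb E \cong \pi_*\widetilde{\mathbb L}$ from the proof of Theorem \ref{HH_mirror}. Since $a \in \mathcal A^0$, the point $c$ avoids the ramification divisor of $X_a$ by Proposition \ref{prop:generic}(3). Hence $\pi^{-1}(c)$ consists of $n$ distinct points $\tilde c$, and the fiber decomposes as
\[\mathbb E_c \cong \bigoplus_{\tilde c \in \pi^{-1}(c)} \widetilde{\mathbb L}|_{h^{-1}(a)\times\{\tilde c\}} = \bigoplus_{\tilde c} \widetilde{\mathcal P}_{\tilde c - \tilde c_0}.\]
Under this decomposition $\mathbf\Phi_c$ acts on the summand indexed by $\tilde c$ as multiplication by the scalar $\tilde c \in K_c$. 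So $\mathbf \Phi_c$ is diagonal with pairwise distinct eigenvalues, and the splitting into eigenline bundles holds globally over $h^{-1}(a)$, not just pointwise.

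\textbf{Step 2: diagonalize $d\Lambda^k\mathbf\Phi_c$.} Taking exterior powers gives
\[\Lambda^k \mathbb E_c \cong \bigoplus_{|\Sigma| = k} \Lambda^k\Bigl(\bigoplus_{\tilde c\in\Sigma}\widetilde{\mathcal P}_{\tilde c-\tilde c_0}\Bigr),\]
where each summand is a line bundle, namely the tensor product of the lines indexed by $\Sigma$. By the defining formula \eqref{ext}, $d\Lambda^k\mathbf\Phi_c$ acts on the summand for $\Sigma$ as multiplication by $\sum_{\tilde c\in\Sigma}\tilde c$. To see this, write a local section as a wedge of sections of the eigenlines; applying $\mathbf\Phi_c$ to each factor in turn and summing produces exactly that scalar.

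\textbf{Step 3: identify the kernel.} Since $d\Lambda^k\mathbf\Phi_c$ is diagonal with respect to a direct sum of line subbundles, its kernel is the direct sum of those summands whose scalar vanishes identically on $h^{-1}(a)$. These are exactly the summands with $\sum_{\tilde c \in \Sigma}\tilde c = 0$. Two points need care here.
\begin{itemize}
\item The eigenvalues $\tilde c$ depend only on $a$ and $c$, not on the point of $h^{-1}(a)$. So each scalar is a constant in $K_c$ on the whole fiber, and each summand lies either entirely in the kernel or meets it only in zero.
\item Consequently the kernel has constant rank and is a genuine subbundle, equal to $\bigoplus_{\Sigma_{ij}}\Lambda^k\bigl(\bigoplus_{\tilde c\in\Sigma_{ij}}\widetilde{\mathcal P}_{\tilde c-\tilde c_0}\bigr)$, as the statement claims.
\end{itemize}

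\textbf{Expected obstacle.} The main difficulty is not the linear algebra but making Step 1 rigorous as an isomorphism of bundles over $h^{-1}(a)$. This requires the restriction to $h^{-1}(a)$ of the twisted universal bundle $\mathbb E$ to agree with $\pi_*\widetilde{\mathbb L}$ compatibly with the Higgs fields. The gerbe twist must be trivialized on $h^{-1}(a)$ by the normalization at $\tilde c_0$, and this identification must hold up to an overall line bundle that does not affect kernels. I would take this directly from the facts quoted from \cite{HH} in the bullet list preceding the proposition.

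Finally, I would observe that Proposition \ref{prop:generic}(4) only matters for which $\Sigma$ occur: when $k\in\{i,n-i\}$ the kernel has rank $1$, or rank $2$ when $i=n/2$. The isomorphism itself holds for every $k$, with the direct sum possibly empty.
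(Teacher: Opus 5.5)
Your proposal is correct and is essentially the paper's argument. The paper likewise reduces to an endomorphism with pairwise distinct eigenvalues, uses the induced basis $e_I$ of $\Lambda^k V$ on which $d\Lambda^k A$ acts by $\sum_{j\in I}\lambda_j$, and reads off the kernel; you additionally do this at the bundle level via $\mathbb E\cong\pi_*\widetilde{\mathbb L}$ and the constancy of the eigenvalues on $h^{-1}(a)$, which the paper leaves implicit. One small correction: twisting $\mathbb E$ by a line bundle $N$ pulled back from $h^{-1}(a)$ would change the kernel by $N^{\otimes k}$, so you genuinely need the on-the-nose identification $\pi_*\widetilde{\mathbb L}\cong\mathbb E$ quoted from \cite{HH}, not one merely up to a twist that ``does not affect kernels.''
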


\begin{proof}
Given a Higgs bundle $(E,\Phi) \in h^{-1}(a)$ and a point $c_{ij}$, we are led to the following linear algebra problem. Let $V$ be a rank $n$ vector space and $A: V \to V$ be a diagonalizable endomorphism with distinct eigenvalues. Consider the associated endomorphism on the exterior product $d\Lambda^k A: \Lambda^k V \to \Lambda^k V$ defined by \eqref{ext}. 
We wish to determine the kernel of this map. 

Let $(e_i)$ be a basis of $V$ made up of eigenvectors for $A$. 
Then there is an associated basis for $\Lambda^k V$ \[(e_{i_1}\wedge \ldots \wedge e_{i_k})_{1\le i_1 < \cdots < i_k \le n}.\] If $\lambda_j$ is the eigenvalue associated to $e_j$, then, using multi-index notation, we have
\[\Lambda^kA(e_I) = \sum_{j \in I} \lambda_j \cdot e_I.\] Thus, a basis for the kernel is given by the $e_I$ where $I$ is such that $\sum_{j \in I} \lambda_j = 0$.
\end{proof}

%
%


This allows us to rewrite the bundle $\Lambda^0_{\mathcal E}(a)$ of \eqref{lambda_0} as 


\begin{equation} \label{lambda_zero}
	\mathbb E_L \otimes \bigotimes_{i=1}^{n-1} \bigotimes_{j=1}^{m_i} \ker (d\Lambda^{n-i} \Phi_{c_{ij}}).
\end{equation}

The goal now is to construct a bundle $\Lambda_\alpha^0$ over $\mathcal M^0$ whose restriction to a fiber $h^{-1}(a)$ of the Hitchin map over $\mathcal A^0$ agrees with
\begin{equation}\label{lambda_zero_ha}
	S(\mathcal O_{\mathcal L_\alpha \cap h^{-1}(a)})=\bigoplus_{\mathcal E \in F_\alpha(a)} \Lambda^0_{\mathcal E}(a).
\end{equation}

The first step towards such a construction will be to extend the definition of the bundle $\Lambda_{\mathcal E}$ of Theorem \ref{HH_mirror} to a bundle over $\mathcal M^0 \times F_\alpha$. Essentially we want to show that the dependency of $\Lambda_{\mathcal{E}}$ on the fixed point $\mathcal E$ is holomorphic. 

\begin{prop}
	There exists a vector bundle $\Lambda_\alpha$ over $\mathcal M^0 \times F_\alpha$ such that, for each $\mathcal E \in F_\alpha$, we have
	\[(\Lambda_\alpha)|_{\mathcal M^0 \times \{\mathcal E\}} \cong \Lambda_{\mathcal E}.\]
\end{prop}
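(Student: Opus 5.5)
The plan is to build $\Lambda_\alpha$ by making every ingredient of \eqref{lambda_E} vary in families over $F_\alpha$. Recall from Section \ref{4_1} that $F_\alpha$ is a degree $n^{2g}$ cover of $X^{[m_1]}\times\cdots\times X^{[m_{n-1}]}$, and that a point $\mathcal E$ records the divisors $\delta_1,\ldots,\delta_{n-1}$ together with the line bundle $L=\mathcal O(\delta_0)$. Accordingly, I split $\Lambda_{\mathcal E}$ into the factor $\bigotimes_j \Lambda^{n-1}(\mathbb E_{c_{0j}})$ coming from $\delta_0$ and the factors coming from $\delta_i$, $i\ge 1$, and globalize each one separately.

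For the factors with $i\ge 1$, I use the standard construction of tautological bundles on symmetric products. Let $\mathcal Z_i\subset X^{[m_i]}\times X$ be the universal divisor; it is finite and flat of degree $m_i$ over $X^{[m_i]}$. Pull back the twisted bundle $\Lambda^{n-1}\mathbb E$ on $\mathcal M^0\times X$ to $\mathcal M^0\times\mathcal Z_i$. I then take the norm, or multiplicative pushforward, along the finite flat map $\mathcal M^0\times\mathcal Z_i\to\mathcal M^0\times X^{[m_i]}$; concretely, this is the descent of $\boxtimes^{m_i}$ from $\mathcal M^0\times X^{m_i}$ along the $S_{m_i}$-action. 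Over a reduced divisor $\sum_j c_{ij}$ the fibre of this norm is exactly $\bigotimes_j\Lambda^{n-1}(\mathbb E_{c_{ij}})$, and away from the diagonal the construction is algebraic, so it gives a (twisted) vector bundle of the right rank. The step I expect to be the main obstacle is non-reduced divisors. Higher-rank norms along the diagonals are delicate: the naive symmetric-power descent may fail to be locally free, or its fibres may not match the tensor product. My first attempt would be to use the determinant-of-pushforward formalism (Deligne's norm) applied to the line bundle $\det\Lambda^{n-1}\mathbb E$, combined with the $S_{m}$-invariant part of $(\Lambda^{n-1}\mathbb E)^{\boxtimes m}$. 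Here I would check local freeness via flatness of $X^{m}\to X^{[m]}$, namely that the invariants of a free $\mathcal O_{X^m}$-module under a group whose order is invertible form a direct summand. Failing that, I would note that only very stable fixed points, with $\delta_+$ reduced, matter for what follows, and restrict to the corresponding open set.

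For the $\delta_0$ factor, I use \eqref{E_L}: the bundle $\mathbb E_L\cong \Ndet^*\mathcal P_{L(-lc_0)}$ depends only on $L$, not on the chosen divisor. Letting $L$ vary, the map $\mathcal E\mapsto L(-lc_0)\in J$ is algebraic on $F_\alpha$, since $L$ is an $n$-th root of $K^{n(n-1)/2}(D)$ and so is determined algebraically by the covering structure. The Poincaré bundle $\mathcal P$ on $J\times J^\vee$, pulled back via $\Ndet\times(\mathcal E\mapsto L(-lc_0))$, therefore gives a line bundle on $\mathcal M^0\times F_\alpha$ restricting correctly to each slice. Any further factor $\Lambda^{n-1}$ attached to $\delta_0$ is absorbed in the same way, since only $L$ enters.

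Finally, I set $\Lambda_\alpha$ equal to the tensor product of the pullbacks to $\mathcal M^0\times F_\alpha$ of these bundles, the pullback going along the covering map $F_\alpha\to\prod_i X^{[m_i]}$. It remains to check the twisting. Each $\delta_i$ factor is $\theta^{(n-i)m_i}$-twisted, and the $\delta_0$ factor carries $\theta^{nl}$, so the total twist is $\theta^D$ with $D$ divisible by $n$, exactly as in \eqref{d=D}. Because $F_\alpha$ is connected, $D$ is constant along it, so the twist is trivial throughout and $\Lambda_\alpha$ is an honest vector bundle. Restricting to $\mathcal M^0\times\{\mathcal E\}$ recovers \eqref{lambda_E} by construction.
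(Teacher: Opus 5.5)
Your construction is essentially the paper's: descend $\bigotimes_j p_j^*\Lambda^{n-i}\mathbb E$ from $\mathcal M^0\times X^{m_i}$ to $\mathcal M^0\times X^{[m_i]}$, pull back the Poincaré bundle along $\Ndet$ and $L\mapsto L(-lc_0)$ for the $\delta_0$ factor, then tensor and restrict to $\mathcal M^0\times F_\alpha$. Your worry about the diagonals is legitimate, since the paper only invokes fibrewise isomorphisms under permutations, and these do not give descent where the stabilizers permute tensor factors nontrivially; your fallback to the very stable locus, the only part used later, is if anything more careful than the original.

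Note also that the exterior powers should be $\Lambda^{n-i}$, not $\Lambda^{n-1}$: \eqref{lambda_E} contains a typo, as the paper's proof, \eqref{E_L} and $D=nl+\sum_i(n-i)m_i$ show, and your own twist count $\theta^{(n-i)m_i}$ already presupposes $\Lambda^{n-i}$.
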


\begin{proof}
	Let $i>0$. For $j=1,\ldots,m_i$, let $X_j \cong X$ and consider the product $\mathcal M^0 \times X_1 \times \cdots \times X_{m_i}$, with projection $p_j$ to $\mathcal M^0 \times X_j$. Over each $\mathcal M^0 \times X_j$ we have the (twisted) bundle $\Lambda^i \mathbb E$. We can then form the bundle 
	\[\Lambda^{(i)} \coloneqq \bigotimes_{j=1}^{m_i} p_j^*\Lambda^{n-i} \mathbb E\] over $\mathcal M^0 \times X_1 \times \cdots \times X_{m_i}$. At a tuple $c=(c_1,\ldots,c_{m_i}) \in X^{m_i}$, it satisfies
	\[\Lambda^{(i)}|_{\mathcal M^0 \times \{c\}} = \bigotimes_{j=1}^{m_i} \Lambda^{n-i}(\mathbb E_{c_j}). \] Given any permutation $\sigma$ of the $c_j$, we have $\Lambda^{(i)}|_{\mathcal M^0 \times \{c\}} \cong \Lambda^{(i)}|_{\mathcal M^0 \times \{\sigma \cdot c\}}$ so that $\Lambda^{(i)}$ descends to a bundle over the symmetric product  $\mathcal M^0 \times X^{[m_i]}$.
	
	 We have a map from $\mathcal M^0 \times J^l$ to $J\times J$ defined by
	\[((E,\Phi),L) \longmapsto (\Ndet(E,\Phi),L(-lc_0))\] which we can use to pull back the Poincaré bundle $\mathcal P$ over $J\times J$ and obtain a bundle $\Lambda^{(0)}$ over $\mathcal M^0 \times J^l$. Using \eqref{E_L}, it has the following property
	\[\Lambda^{(0)}|_{\mathcal M^0 \times \{L\}} \cong \Ndet^*(\mathcal P_{L(-lc_0)}) \cong \bigotimes_{j=0}^{m_0} \Lambda^n(\mathbb E_{c_{0j}}),\] where $L = \mathcal O(\delta_0)$, with $\delta_0 = \sum_{j=1}^{m_0} c_{0j}$ and $\pm c_{0j} \in X$.
	
	We can then build a bundle $\Lambda_\alpha$ over the product $\mathcal M^0 \times J^l(X) \times X^{[m_1]} \times \cdots \times X^{[m_{n-1}]}$ by pulling back and tensoring all of the $\Lambda^{(i)}$, with $i=0,\ldots,n-1$, 
	which, after restricting to $\mathcal M^0 \times F_\alpha$, satisfies the property in the statement of the theorem.
\end{proof}

\begin{prop}
	There exists a vector bundle $\Lambda^0_\alpha$ over $\mathcal M^0$ whose restriction to each fiber $h^{-1}(a)$ equals 
	\[
	S(\mathcal O_{\mathcal L_\alpha \cap h^{-1}(a)}) = \bigoplus_{\mathcal E \in F_\alpha(a)} \Lambda^0_{\mathcal E}(a).
	\]
	
\end{prop}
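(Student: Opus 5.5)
The bundle $\Lambda^0_\alpha$ will be built as a subbundle of the pullback of $\Lambda_\alpha$ to an incidence variety. First, package the admissible fixed points into a family over $\mathcal M^0$. Define
\[
\mathcal F \coloneqq \{((E,\Phi),\mathcal E) \in \mathcal M^0 \times F_\alpha \mid \mathcal E \in F_\alpha(h(E,\Phi))\},
\]
with projection $q:\mathcal F \to \mathcal M^0$. By \eqref{ineq:divisor}, the condition $\mathcal E\in F_\alpha(a)$ is the closed algebraic condition $\div\Phi_i \le \div R_{n,i}(a)$ for all $i$, together with the $n$-th root condition on $\delta_0$. By Theorem \ref{thm:B} (through Corollary \ref{cor:bounds}), every such divisor choice is actually realised.

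Over $\mathcal A^0$, Proposition \ref{prop:generic} says each $R_{n,i}(a)$ has reduced divisor and these divisors are pairwise disjoint for $j\notin\{i,n-i\}$. Hence the zeros of the $R_{n,i}(a)$ vary as an étale cover of $\mathcal A^0$. It follows that $\mathcal F \to \mathcal M^0$ is a finite étale cover, obtained by pulling back along $h$ a finite étale cover $\widetilde{\mathcal A}^0 \to \mathcal A^0$. Its fiber over $(E,\Phi)$ is exactly $F_\alpha(a)$.

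Next, pull back $\Lambda_\alpha$ (from the previous proposition) along $\mathcal F \hookrightarrow \mathcal M^0\times F_\alpha$. Inside it, define the subsheaf
\[
\Lambda'_\alpha \coloneqq \mathbb E_L \otimes \bigotimes_{i=1}^{n-1}\bigotimes_{j=1}^{m_i} \ker\bigl(d\Lambda^{n-i}\mathbf\Phi_{c_{ij}}\bigr),
\]
where $\mathbf\Phi_{c_{ij}}$ is the universal Higgs field evaluated at the points $c_{ij}$. These points are now holomorphic functions on $\mathcal F$, since they are locally given by the sheets of the divisor cover.

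The key point is that $\Lambda'_\alpha$ is a genuine subbundle, which requires the kernels to have constant rank. By Proposition \ref{ext_product}, the rank of $\ker d\Lambda^{n-i}\Phi_{c_{ij}}$ equals the number of size-$(n-i)$ subsets of eigenvalues summing to zero. For points of $\mathcal A^0$, item 4 of Proposition \ref{prop:generic} makes this number $1$ for $i\ne n/2$ and $2$ for $i=n/2$. Item 3 guarantees that the eigenvalues are distinct, so $\Phi_{c_{ij}}$ is diagonalizable with distinct eigenvalues as that proposition requires. Constant rank of the kernel of a holomorphic bundle map gives a holomorphic subbundle. The twisting by the gerbe is handled exactly as for $\Lambda_{\mathcal E}$ in \eqref{d=D}: $\Lambda'_\alpha$ is a subsheaf of a genuine (untwisted) vector bundle, so it is untwisted too.

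Finally, set $\Lambda^0_\alpha \coloneqq q_*\Lambda'_\alpha$. Since $q$ is finite étale, this is a vector bundle. Its restriction to $h^{-1}(a)$ is
\[
\bigoplus_{\mathcal E\in F_\alpha(a)} \Lambda'_\alpha|_{h^{-1}(a)\times\{\mathcal E\}}.
\]
By \eqref{lambda_zero}, Proposition \ref{ext_product} and \eqref{E_L}, each summand is $\Lambda^0_{\mathcal E}(a)$. By \eqref{lambda_zero_ha} the direct sum therefore equals $S(\mathcal O_{\mathcal L_\alpha\cap h^{-1}(a)})$.

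The main obstacle will be the étaleness of $\mathcal F\to\mathcal M^0$, together with the holomorphic (rather than merely fiberwise) identification of the $c_{ij}$ and of $\mathbb E_L$. The plan for this step is to use the reducedness in Proposition \ref{prop:generic}: the divisor of $R_{n,i}(a)$ defines a finite unramified cover $\{(a,c) : R_{n,i}(a)(c)=0\}\to\mathcal A^0$. The subdivisor choices then form a union of symmetric products of sheets, which is étale over $\mathcal A^0$. The $n$-th root choices of $\delta_0$ add a further étale $J[n]$-torsor.
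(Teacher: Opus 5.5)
Your proposal is correct and follows essentially the paper's route: the same incidence variety ($\widetilde{\mathcal M}$ in the paper) with its finite projection to $\mathcal M^0$, the constant-rank kernels of $\Lambda^{n-i}\mathbf\Phi$ at the points $c_{ij}$ tensored with the $\mathbb E_L$-factor, and pushforward along the cover. The paper builds the kernel bundle $\mathbb V_i$ once on the universal divisor $h^*Z_{n,i}\subset\mathcal M\times X$ and descends through symmetric products, whereas you use étaleness over $\mathcal A^0$ to make the $c_{ij}$ locally holomorphic; this is, if anything, more careful.

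One small correction: for $i<n/2$ the closed incidence condition should be $\div\Phi_i+\div\Phi_{n-i}\le\div R_{n,i}(a)$, not the two separate inequalities, because a common zero of $\Phi_i$ and $\Phi_{n-i}$ would give two distinct zero-sum blocks of size $i$, contradicting item 4 of Proposition \ref{prop:generic}. Since you define $\mathcal F$ through $F_\alpha(a)$, this does not affect the argument.
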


\begin{proof}

We now consider the following subvariety of $\mathcal M^0 \times F_\alpha$, 
\[\widetilde{\mathcal M} \coloneqq \{((E,\Phi),\mathcal E) \mid \mathcal E \in F_\alpha(h(E,\Phi))\},\] and set $\widetilde \Lambda \coloneqq \Lambda|_{\widetilde{\mathcal M}}$. The projection onto the first factor is a finite covering $p:\widetilde{\mathcal M} \to \mathcal M^0$ whose fiber over a point $(E,\Phi) \in h^{-1}(a)$ is precisely the set of admissible fixed points $F_\alpha(a)$.

Recall that if $\mathcal E \in F_\alpha(h(E,\Phi))$ then, for each $c_{ij} \in \div \Phi_i$, the eigenvalues of the Higgs field $\Phi_{c_{ij}}$ split into two blocks of size $i$ and $n-i$, each summing to zero. In turn, this implies the existence of a $(n-i)$-dimensional $\Phi_{c_{ij}}$-invariant subspace where the restriction has trace zero. In terms of the universal bundle $(\mathbb E,\mathbf \Phi)$, this can be interpreted as saying that the induced map $\Lambda^{n-i} \mathbf \Phi_{c_{ij}} : \Lambda^{n-i} \mathbb E_{c_{ij}} \to \Lambda^{n-i} \mathbb E_{c_{ij}}$ has a non-zero kernel, by Proposition \ref{ext_product}.


For each $i$, the map $\Lambda^{n-i} \mathbf \Phi$ is an endomorphism of $\Lambda^{n-i}\mathbb E$ over $\mathcal M^0 \times X$. Consider the map $R_{n,i}: \mathcal A \times X \to \C$ defined in a natural way by $(a,c) \mapsto R_{n,i}(a)(c)$. We then have a universal divisor $Z_{n,i} \coloneqq \div R_{n,i}$ which can be pulled back using the map $h = h \times \id: \mathcal M \times X \to \mathcal A \times X$ to obtain 
\[h^* Z_{n,i} = \{((E,\Phi),c) \mid c \in \div R_{n,i}(h(E,\Phi))\}.\] Over this divisor of $\mathcal M \times X$, the map $\Lambda^{n-i} \mathbf \Phi$ has constant rank, so its kernel $\mathbb{V}_i$ is a subbundle over $h^* Z_{n,i}$. Letting $p_{ij} : \mathcal M \times X^{m_i} \to \mathcal M \times X$ be the projection to $\mathcal M$ times the $j$-th copy of $X$, we can consider the intersection \[\bigcap_j p_{ij}^* h^* Z_{n,i} \subset \mathcal M \times X^{m_i}\] over which we have the bundle \[\bigotimes_j p_{ij}^* \mathbb V_i,\] which descends to the quotient by the symmetric group $\bigcap_j p_j^* h^* Z_{n,i}/S_{m_i} \subset \mathcal M \times X^{[m_i]}$. Finally, for $i \ge 1$ consider the projection $q_i$ from $\mathcal M \times J^l(X) \times X^{[m_1]} \times \cdots \times X^{[m_{n-1}]}$ to $\mathcal M \times X^{[m_i]}$ and $q_0$ the projection to $\mathcal M \times J^l(X)$. Over the intersection 
\begin{equation}\label{big_intersect}
	\bigcap_{i>0} q_i^{-1} \bigcap_j p_{ij}^* h^* Z_{n,i}
\end{equation}
we have the bundle
\begin{equation} \label{tilde_lambda}
	\widetilde \Lambda^0 \coloneqq q_0^* \Lambda^{(0)} \otimes \bigotimes_i q_i^* \bigotimes_j p_{ij}^* \mathbb V_i.
\end{equation}

We obtain $\widetilde{\mathcal M}$ by further intersecting \eqref{big_intersect} with $\mathcal M \times F_\alpha^{\mathrm{vs}}$, where by $F_\alpha^{\mathrm{vs}}$ we mean the very stable locus of $F_\alpha$.
 satisfies $(\widetilde \Lambda^0)_{ ((E,\Phi),\mathcal E)} \cong (\Lambda^0_{\mathcal E}(h(E,\Phi)))_{(E,\Phi)}$ as in \eqref{lambda_zero}.


 Finally, using the projection $p:\widetilde{\mathcal M} \to \mathcal M^0$, we can consider the pushforward $p_* \widetilde \Lambda^0$ over $\mathcal M^0$. Notice that, if $h(E,\Phi) = a \in \mathcal A^0$, then
\[(p_* \widetilde \Lambda^0)_{(E,\Phi)} = \bigoplus_{\mathcal E \in F_\alpha(a)} (\Lambda^0_{\mathcal E}(a))_{(E,\Phi)},\] i.e., over each fiber $h^{-1}(a)$, the pushforward $\Lambda^0_\alpha \coloneqq p_* \widetilde \Lambda^0$ agrees with \eqref{lambda_zero_ha}.
\end{proof}

One can also see the bundle $\widetilde \Lambda^0$ in \eqref{tilde_lambda} as a subbundle of $\widetilde \Lambda$ using the following algebraic lemma.

\begin{lemma}
	Let $(A_i : V_i \to V_i)_{i=1,\ldots,N}$ be a finite collection of endomorphisms. There exists a canonically defined linear map with domain $\bigotimes_i V_i$ whose kernel is precisely $\bigotimes_i \ker A_i$. 
\end{lemma}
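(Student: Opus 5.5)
The idea is to realize $\bigotimes_i \ker A_i$ as the common kernel of the operators $\mathrm{id}\otimes\cdots\otimes A_i\otimes\cdots\otimes \mathrm{id}$, and then bundle these into a single map. For each $i$ set
\[\widehat A_i \coloneqq \id_{V_1}\otimes \cdots \otimes \id_{V_{i-1}} \otimes A_i \otimes \id_{V_{i+1}}\otimes\cdots\otimes \id_{V_N} : \textstyle\bigotimes_k V_k \longrightarrow \bigotimes_k V_k,\]
and define
\[\mathbf A \coloneqq (\widehat A_1,\ldots,\widehat A_N): \textstyle\bigotimes_k V_k \longrightarrow \bigoplus_{i=1}^N \bigotimes_k V_k.\]
This map is canonical: it uses only the $A_i$ and the identities, with no choices of bases. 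Consequently it makes sense fibrewise for bundle endomorphisms, which is how it will be applied to the maps $d\Lambda^{n-i}\mathbf\Phi_{c_{ij}}$. Its kernel is $\bigcap_i \ker \widehat A_i$, so it remains to prove that this intersection equals $\bigotimes_i \ker A_i$.

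The inclusion $\bigotimes_i \ker A_i \subseteq \bigcap_i \ker\widehat A_i$ is immediate on pure tensors and extends by linearity. For the reverse inclusion we work over a field and use exactness of tensor products. Since $\ker(A\otimes\id_W) = \ker A\otimes W$ for vector spaces, we get
\[\ker \widehat A_i = V_1\otimes\cdots\otimes \ker A_i\otimes\cdots\otimes V_N.\]
To see this, choose a complement $V_i = \ker A_i\oplus C_i$. The map $A_i$ is injective on $C_i$, so $A_i\otimes\id$ is injective on $C_i\otimes W$.

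Next we intersect these subspaces. For subspaces $U_k\subseteq V_k$ we claim that
\[\bigcap_i \bigl(V_1\otimes\cdots\otimes U_i\otimes\cdots\otimes V_N\bigr)=\textstyle\bigotimes_i U_i.\]
To prove it, pick bases of the $V_k$ adapted to the $U_k$, that is, bases extending bases of the $U_k$. The induced product basis of $\bigotimes V_k$ is then adapted to each subspace on the left. Each such subspace is spanned by the basis tensors whose $i$-th factor lies in the chosen basis of $U_i$. Hence the intersection is spanned by the basis tensors all of whose factors lie in the $U_k$, and that span is $\bigotimes U_k$.

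Neither step presents a real obstacle, since both are routine linear algebra. The only point that needs care is the claim that intersections of these subspaces commute with tensor products, and the adapted basis argument settles it. For the application to bundles, one should also note that the kernel of $\mathbf A$ has constant rank whenever each $\ker A_i$ does. That condition holds over the divisors $h^*Z_{n,i}$ used above, so $\widetilde\Lambda^0$ is realized as the kernel subbundle of a canonical map out of $\widetilde\Lambda$.
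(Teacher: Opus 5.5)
Your proposal is correct and follows the paper's proof: the paper also takes the map $\bigoplus_i \widetilde A_i$, where $\widetilde A_k = \id\otimes\cdots\otimes A_k\otimes\cdots\otimes\id$, and identifies its kernel as $\bigcap_i \ker\widetilde A_i = \bigotimes_i \ker A_i$. The only difference is that you prove the two underlying linear-algebra facts (the formula $\ker(A\otimes\id_W)=\ker A\otimes W$ and the intersection formula via adapted bases), whereas the paper simply asserts them.
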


\begin{proof}
	For each $k$, consider the endomorphism of $\bigotimes_i V_i$,
	\[\widetilde{A_k}\coloneqq \id_{1} \otimes \cdots \otimes A_k \otimes \cdots \otimes \id_N, \] where $\id_k \coloneqq \id_{V_k}$. Its kernel is 
	$V_1 \otimes \cdots \otimes \ker A_k \otimes \cdots \otimes V_N$.  Since
	$\bigcap_i \ker \widetilde{A_i} = \bigotimes_i \ker A_i$, the linear map
	\[\bigoplus_i \widetilde A_i : \bigotimes_i V_i \longrightarrow \bigoplus_k \bigotimes_i V_i \] has the desired property.
\end{proof}

In this way, by pulling back and tensoring all the maps $\Lambda^{n-i}\mathbf \Phi : \Lambda^{n-i}\mathbb E \to \Lambda^{n-i}\mathbb E$ to $\mathcal M \times F_\alpha$ following the same procedure as in the construction of the bundle $\Lambda$, we can realize the bundle $\widetilde \Lambda^0$ as a subbundle of $\widetilde \Lambda$ over $\widetilde {\mathcal M}$.

Thus we have shown the following analogue of Theorem \ref{HH_mirror} for the Lagrangian $\mathcal L_\alpha$.

\begin{prop}\label{mirror_L}
		The relative Fourier-Mukai transform $S$ over $\mathcal \{0\} \times A^0$ satisfies
	\[S\left(\mathcal O_{\mathcal L_\alpha \cap \mathcal M^0}\right) = \Lambda^0_\alpha.\]
\end{prop}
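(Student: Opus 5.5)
The statement is essentially an assembly of the fibrewise computation with the global construction of $\Lambda^0_\alpha$. I would compare the two sides fibre by fibre over $\mathcal A^0$ and then upgrade this to an identification of families. Since the relative Fourier--Mukai transform is computed over $\{0\}\times\mathcal A^0$, base change for the proper flat family $h:\mathcal M^0\to\mathcal A^0$ gives that the restriction of $S(\mathcal O_{\mathcal L_\alpha\cap\mathcal M^0})$ to the dual fibre over $a$ is $S(\mathcal O_{\mathcal L_\alpha\cap h^{-1}(a)})$, computed with the Poincaré bundle of $J_a$.

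The first step is to use Theorem \ref{thm:B} and Corollary \ref{cor:bounds}, which show that $\mathcal L_\alpha\cap h^{-1}(a)$ is a reduced finite set of the stated cardinality. I need to make sure the intersection is transverse, so that the structure sheaf is a sum of skyscrapers with no nilpotent structure. For this I would imitate Proposition \ref{rk2_der}: at a point of $\mathcal L_{\mathcal E}$ with $\mathcal E$ very stable, transversality of $\mathcal L_\alpha$ to $h^{-1}(a)$ should follow from the corresponding transversality of $W^+_{\mathcal E}$, which underlies Proposition \ref{HH_count}. Alternatively, it follows from finiteness together with $h|_{\mathcal L_\alpha}$ being a finite map of equidimensional varieties whose generic degree equals the count.

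Once this is in place, the points are listed explicitly in the displays following \eqref{n_odd}, and the transform of each skyscraper is a line bundle $\widetilde{\mathcal P}$ by \eqref{LM_point} and \eqref{FM_sum}. The second step is to regroup these: summing over the $2^{m_{n/2}}$ choices (for $n$ even) yields the tensor product of direct sums written above. Proposition \ref{ext_product} then identifies each factor with $\ker d\Lambda^{n-i}\Phi_{c_{ij}}$, using the uniqueness of the zero-sum partition from Proposition \ref{prop:generic}(4). Together with \eqref{E_L}, this gives exactly \eqref{lambda_zero} and hence \eqref{lambda_zero_ha}. By the preceding proposition, this agrees with $\Lambda^0_\alpha|_{h^{-1}(a)}$.

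The main obstacle is passing from fibrewise isomorphisms to a global isomorphism over $\mathcal M^0$, since isomorphisms chosen on each fibre need not glue. I would avoid this by building the map globally. The support $\mathcal L_\alpha\cap\mathcal M^0$ is a finite étale cover of $\mathcal A^0$, and it maps to $\widetilde{\mathcal M}$ compatibly with $\mathcal E\mapsto F_\alpha(a)$. The relative transform of its structure sheaf is then the pushforward of the relative Poincaré bundle restricted to this cover. The universal line bundle $\widetilde{\mathbb L}$ on the family of spectral curves satisfies $\pi_*\widetilde{\mathbb L}\cong\mathbb E$, and this identifies the relative Poincaré bundle canonically with the kernels $\mathbb V_i$ and with $\Lambda^{(0)}$. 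This gives a global morphism to $p_*\widetilde\Lambda^0=\Lambda^0_\alpha$ that is fibrewise an isomorphism, hence an isomorphism. The remaining ambiguity is a line bundle pulled back from the base, coming from normalisation at $\tilde c_0$. I would handle it by noting that both sides are normalised in the same way, or else state the result up to such a twist, as in \cite{HH}.
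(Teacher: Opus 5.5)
Your proposal is correct and follows the paper's own route. The paper obtains this proposition simply by assembling the fibrewise computation of $S(\mathcal O_{\mathcal L_\alpha\cap h^{-1}(a)})$ from the explicit intersection points via \eqref{LM_point}, \eqref{FM_sum}, Proposition \ref{ext_product} and \eqref{E_L}, and matching it with the fibres of $\Lambda^0_\alpha=p_*\widetilde\Lambda^0$, tacitly assuming both the reducedness of the intersection and the gluing of the fibrewise identifications. Your extra steps therefore only add rigour, with one caution: transversality of $W^+_{\mathcal E}$ to $h^{-1}(a)$ does not by itself give transversality of $\mathcal L_\alpha$, because $T\mathcal L_\alpha$ also contains directions lifting $T_{\mathcal E}F_\alpha$, which are not tangent to $W^+_{\mathcal E}$; it is your second, degree-counting argument that actually yields reducedness, and it only needs $h|_{\mathcal L_\alpha\cap\mathcal M^0}$ to be quasi-finite and flat rather than finite, since $\mathcal L_\alpha$ is not closed.
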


To pass from a bundle on $\mathcal M(\GL)$ to one on $\mathcal M(\PGL)$ -- the mirror of $\mathcal M = \mathcal M(\SL)$ -- we check that it descends to the quotient at each fiber of the Hitchin map. For the group $\PGL(n,\C)$, the fiber of the Hitchin map over $a \in \mathcal A$ can be identified with $\Prym(X_a)^\vee$, the dual of the Prym variety associated to the spectral cover $\pi:X_a \to X$. Let $i:\Prym(X_a) \to \Jac(X_a)$ be the inclusion. Then from \cite[Equation (11.3.3)]{Polishchuk_2003}, we have
\[S_J \circ i_* = \hat i^* \circ S_P,\] where $S_J$ is the Fourier-Mukai associated to the Jacobian, $S_P$ the one associated to the Prym and $\hat i:\Jac^\vee \to \Prym^\vee$ is the dual map. Thus, if $M \in \Prym(X_a)$, the Fourier-Mukai of the skyscraper sheaf on $\Jac(X_a)$ supported on $M$ is the pullback of a sheaf on $\Prym^\vee(X_a)$. In other words, we have the following Cartesian diagram
\[
\begin{tikzcd}
	S_J( i_*\mathcal O_M) \arrow[r] \arrow[d] & S_P(\mathcal O_M) \arrow[d]\\ 
	\Jac^\vee(X_a) \arrow[r,"\hat i"] & \Prym^\vee(X_a).
\end{tikzcd}
\]

Thus, as a corollary of Proposition \ref{mirror_L}, we have determined a vector bundle over an open dense set of $\mathcal M(\PGL)$ that over each fiber of the Hitchin map $h^{-1}(a)$, agrees with the Fourier-Mukai transform of the structure sheaf of the intersection $\mathcal L_\alpha \cap h^{-1}(a)$.


\begin{theorem}
	\label{thm:D}
	\ThmD
\end{theorem}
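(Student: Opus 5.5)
The plan is to deduce Theorem \ref{thm:D} from Proposition \ref{mirror_L} by descending the bundle $\Lambda^0_\alpha$ from $\mathcal M^0_{\GL}$ to $\mathcal M^0_{\PGL}$, working fiberwise over $\mathcal A^0$ and then globalizing.

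\textbf{Step 1: reduce to Prym descent on each fiber.} Fix $a\in\mathcal A^0$. By the discussion preceding Proposition \ref{mirror_L}, each point of $\mathcal L_\alpha\cap h^{-1}(a)$ corresponds to a line bundle on $X_a$ whose pushforward has trivial determinant. After the translation $M\mapsto M(-d\tilde c_0)$ these points lie in a translate of $\Prym(X_a)\subset J_a$. I will choose the base point normalization so that the translate is $\Prym(X_a)$ itself; otherwise the translation only twists the result by a fixed line bundle, absorbed into $\mathbb E_L$. So the skyscraper sheaf is $i_*\mathcal O_{Z_a}$ for a finite reduced subscheme $Z_a\subset\Prym(X_a)$. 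Applying $S_J\circ i_*=\hat i^*\circ S_P$ from \cite[Equation (11.3.3)]{Polishchuk_2003} gives
\[
S_J(i_*\mathcal O_{Z_a})\cong \hat i^*\Bigl(\bigoplus_{M\in Z_a}S_P(\mathcal O_M)\Bigr).
\]
Hence $\Lambda^0_\alpha|_{h_{\GL}^{-1}(a)}$ is pulled back along $\hat i:\Jac^\vee(X_a)\to\Prym^\vee(X_a)=h_{\PGL}^{-1}(a)$. Its rank is the count in Theorem \ref{thm:B}.

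\textbf{Step 2: globalize over $\mathcal A^0$.} The fiberwise maps $\hat i$ assemble into the quotient map $q:\mathcal M^0_{\GL}\to\mathcal M^0_{\PGL}$. This map is given by dividing out the action of $T^*\Jac(X)$, that is, tensoring by line bundles from $X$ and adding elements of $H^0(K)$, with the trace part already fixed to $0$.

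To show that $\Lambda^0_\alpha$ descends, I will construct an equivariant structure for tensoring by $\Jac(X)$. On the Fourier–Mukai side this is the statement that $\Pic^0$-translations act trivially on a pullback. Concretely, I will write down the isomorphisms on the universal-bundle model of $\Lambda^0_\alpha$ directly. Tensoring $(E,\Phi)$ by a degree zero $N$ does not change $\Phi$. It therefore preserves each kernel $\ker d\Lambda^{n-i}\Phi_{c_{ij}}$ and the set $F_\alpha(a)$, while multiplying the fiber $\Lambda^{n-i}\mathbb E_{c_{ij}}$ by $N_{c_{ij}}^{n-i}$. The total twist is then
\[
N^{\,\sum_i (n-i)\deg\delta_i}\otimes N^{\,n\deg\delta_0},
\]
which corresponds to the degree $D$ of \eqref{d=D}. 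Since $n\mid D$, and $\Ndet$ changes by $N^n$ under the same operation, this twist is compensated by the factor $\mathbb E_L=\Lambda^{(0)}$.

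This compensation is the point I expect to be the main obstacle. Fiberwise descent is automatic by Step 1. What needs care is that the identifications are canonical and satisfy the cocycle condition, so that a bundle on the quotient exists rather than just fiberwise isomorphisms. I would first check it on the $\Jac(X)[n]$-torsion part, where the gerbe $\theta$ also enters; there the vanishing of $D$ modulo $n$ should give the trivial character. I would then extend to the connected group by continuity.

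\textbf{Step 3: identify with the relative transform.} Define $\Lambda^0_\alpha|_{\mathcal M^0_{\PGL}}$ to be the descended bundle $\overline\Lambda^0_\alpha$. By Step 1 its restriction to each $h_{\PGL}^{-1}(a)$ is $\bigoplus_{M}S_P(\mathcal O_M)$. This is exactly the fiberwise Fourier–Mukai transform of $\mathcal O_{\mathcal L_\alpha}|_{\mathcal M^0}$, since $\mathcal M^0\cap h^{-1}(a)$ is a $\Prym(X_a)$-torsor. The relative transform commutes with base change to fibers because $\mathcal L_\alpha\cap\mathcal M^0\to\mathcal A^0$ is finite and flat: it is étale by Proposition \ref{prop:generic} and the count in Theorem \ref{thm:B}. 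The relative identity therefore follows from the fiberwise one together with the relative version of the construction in Proposition \ref{mirror_L}.
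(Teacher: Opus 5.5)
\textbf{Verdict: Step~2 has a genuine gap.} Your Step~1 is the whole of the paper's argument. The paper takes Proposition~\ref{mirror_L} and applies $S_J\circ i_*=\hat i^*\circ S_P$ fibrewise, treating the points of $\mathcal L_\alpha\cap h^{-1}(a)$ as lying in $\Prym(X_a)$; it never attempts a global descent. Your Step~3 defines the $\PGL$ bundle as the descended one, so your Step~2 is load-bearing, and as proposed it does not work.

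\textbf{Why Step~2 fails.} Under $(E,\Phi)\mapsto(E\otimes N,\Phi)$, the twist of $\Lambda^0_\alpha$ is not a power of $N$ fixed by a degree. It is $\bigotimes_x N_x$ over the points of the divisor $D_{\mathrm{tot}}=n\delta_0+\sum_i(n-i)\delta_i$. The factor $\mathbb E_L$ supplies the $n\delta_0$ part of this twist; it does not cancel it.
\begin{itemize}
\item With the normalization at $c_0$, these lines assemble, as $N$ varies, into the line bundle $\mathcal P_{\mathcal O(D_{\mathrm{tot}}-Dc_0)}$ on $\Jac(X)$, by \eqref{FM_sum}.
\item For every fixed point of $F_\alpha$ we have $\mathcal O(D_{\mathrm{tot}})\cong K^{n(n-1)/2}$, and $D=d$ by \eqref{d=D}. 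So the obstruction is the point $K^{n(n-1)/2}(-dc_0)\in\Jac(X)$.
\item This point depends on a linear equivalence class, not on $D$ modulo $n$, and it is nonzero for all but finitely many $c_0$.
\item A nontrivial degree-zero line bundle on the connected group $\Jac(X)$ cannot be trivialized by checking $\Jac(X)[n]$ and ``extending by continuity''.
\end{itemize}

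\textbf{The same problem in Step~1.} The same computation shows the translation there is not ``absorbed into $\mathbb E_L$''. With $M\mapsto M(-d\tilde c_0)$, the intersection points lie in $y+\Prym(X_a)$ with $\Nm(y)=K^{n(n-1)/2}(-dc_0)$. The pullback of $\mathcal P_y$ to $\ker\hat i=\widehat{\Nm}(\Jac(X)^\vee)$ is $\mathcal P_{\Nm(y)}$. Hence $\mathcal P_y$ is a pullback along $\hat i$ only when $\Nm(y)$ is trivial.

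\textbf{What is missing.} You need a renormalization that changes the bundle before descending. Your instinct in Step~1 to make the fibre land in $\Prym(X_a)$ itself is the right one.
\begin{itemize}
\item Since $\Ndet(E\otimes N)=\Ndet(E)\otimes N^n$, replacing $\Lambda^0_\alpha$ by $\Lambda^0_\alpha\otimes\Ndet^*\mathcal P_z$ shifts the obstruction by $nz$. Choose $z$ so that $nz$ cancels $K^{n(n-1)/2}(-dc_0)$.
\item Fibrewise, this is translation by $\pi^*z$. Equivalently, identify $J_a^d$ with $J_a$ through $\pi^*N_0$, where $N_0^n\cong K^{n(n-1)/2}$, instead of through $d\tilde c_0$.
\item This places the $\SL$ fibre in $\Prym(X_a)$ and kills the character. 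Only then does $S_J\circ i_*=\hat i^*\circ S_P$ apply verbatim.
\item The $n^{2g}$ choices of $z$ differ by $\Jac(X)[n]$, and that is where the torsion actually enters.
\item You must still build compatible isomorphisms (the cocycle) from the universal-bundle model.
\end{itemize}

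\textbf{A separate issue in Step~3.} A constant point count does not by itself make $\mathcal L_\alpha\cap\mathcal M^0\to\mathcal A^0$ finite, because $\mathcal L_\alpha$ is not closed (Remark~\ref{not_closed}). You need, for example, Zariski's main theorem together with smoothness of $\mathcal L_\alpha$ and normality of $\mathcal A^0$, to rule out points escaping to the boundary.
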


In order to complete the program of \cite{HH} applied to the Lagrangian $\mathcal L_\alpha$, we have to conclude that the vector bundle $\Lambda^0_\alpha$ we produced is indeed its mirror BBB brane. It remains to check that it is hyperholomorphic, i.e., it carries a connection of type $(1,1)$ with respect to the three complex structures of the moduli space $\mathcal M$. This is a problem that we hope to come back to in future work. 

\newpage
\clearpage
\phantomsection
\printbibliography
\end{document}